\documentclass[a4paper]{amsart}

\usepackage[backend=bibtex,style=alphabetic,natbib=true,maxbibnames=99, minalphanames=1, maxalphanames=4]{biblatex} %

\addbibresource{Bibliography.bib}

\usepackage{tikz}
\usetikzlibrary{decorations.pathreplacing}
\usepackage{amsmath}
\usepackage{amssymb}
\usepackage{mathrsfs}
\usepackage[shortlabels]{enumitem}
\usetikzlibrary{arrows,cd,positioning}

\usepackage[utf8]{inputenc}
\usepackage{xcolor}
\usepackage{geometry}
\usepackage{graphicx}
\usepackage{mathtools}
\usepackage{esint}
\usepackage{comment}
\usepackage[colorlinks=true,linkcolor=blue]{hyperref}

\newtheorem{theorem}{Theorem}[section]
\newtheorem{lemma}[theorem]{Lemma}
\newtheorem{corollary}[theorem]{Corollary}
\newtheorem{proposition}[theorem]{Proposition}

\theoremstyle{definition}
\newtheorem{definition}[theorem]{Definition}

\theoremstyle{remark}
\newtheorem{remark}[theorem]{Remark}

\numberwithin{equation}{section}

\newcommand {\R} {\mathbb{R}} \newcommand {\Z} {\mathbb{Z}}
 \newcommand {\N} {\mathbb{N}}
 
\newcommand {\p} {\partial}

\newcommand {\D} {\Delta}

\newcommand {\supp} {\textup{supp}}
\newcommand {\diam} {\textup{diam}}

\newcommand\restr[2]{{
  \left.\kern-\nulldelimiterspace 
  #1 
  \littletaller 
  \right|_{#2} 
  }}
\newcommand{\littletaller}{\mathchoice{\vphantom{\big|}}{}{}{}}

\DeclareMathOperator {\dist} {dist}

\DeclareMathOperator{\F} {\mathcal{F}}

\begin{document}

\title{On Instability Properties of the Fractional Calder\'{o}n Problem}

\author{Hendrik Baers}
\address{Institute for Applied Mathematics, University of Bonn, Endenicher Allee 60, 53115 Bonn, Germany}
\email{hendrik.baers@uni-bonn.de}
\thanks{}

\author{Giovanni Covi}
\address{Department of Mathematics and Statistics, University of Helsinki, Pietari Kalmin katu 5 (Exactum), 00560 Helsinki, Finland}
\email{giovanni.covi@helsinki.fi}
\thanks{}

\author{Angkana Rüland}
\address{Institute for Applied Mathematics and Hausdorff Center for Mathematics, University of Bonn, Endenicher Allee 60, 53115 Bonn, Germany}
\email{rueland@uni-bonn.de}
\thanks{}

\subjclass[2020]{Primary 35R30, 35B35}

\keywords{fractional Calder\'on, fractional conductivity equation, instability, compression estimates}

\date{\today}

\begin{abstract}
We prove exponential instability properties for the fractional Calder\'on problem and the conductivity formulation of the fractional Calder\'on problem in the regime of fractional powers $s\in (0,1)$. We particularly focus on two settings: First, we discuss instability properties in general domain geometries with scaling critical $L^{\frac{n}{2s}}$ potentials and constant background metrics. Secondly, we investigate instability properties in general geometries with $L^{\frac{n}{2s}}$ potentials and low regularity, variable coefficient, possibly anisotropic background metrics.
In both settings we make use of the methods introduced in \cite{KRS21} and we deduce strong compression estimates for the forward problem. In the first setting this is based on analytic smoothing estimates for a suitable comparison operator while in the second setting involving low regularity metrics this is based on an iterated compression gain. We thus generalize the results from \cite{RS18} to generic geometries and variable coefficients and further also discuss the setting of fractional conductivity equations. In particular, this proves that the logarithmic stability estimates for the fractional Calder\'on problem from \cite{RS20} are optimal.
\end{abstract}

\maketitle

\section{Introduction}
\label{sec:intro}

In this article, we study instability properties of recovering scaling critical $L^{\frac{n}{2s}}$-regular potentials for the fractional Calder\'on problem in general geometries, first for constant and secondly for low regularity, variable coefficient, possibly anisotropic background metrics. Moreover, we also provide analogous results for a fractional conductivity equation. Here and in the following we will always assume that $n\geq2$. Let us begin by outlining the set-up under investigation for constant coefficient background metrics. In this context, we investigate the stability properties associated with the recovery of a potential $q \in L^{\frac{n}{2s}}(\Omega)$ from the knowledge of the (generalized) Dirichlet-to-Neumann map 
\begin{align}
\label{eq:DtN}
\Lambda_q: \widetilde{H}^s(W) \rightarrow H^{-s}(W), \qquad f \mapsto \restr{\left( -\Delta \right)^s u}{W}.
\end{align}
Here, $s\in (0,1)$, $\Omega \subset \R^n$ denotes an open, bounded, non-empty, Lipschitz set, $W \subset \Omega_e:= \R^n \setminus \overline\Omega$ is an open, non-empty, Lipschitz set and the function $u \in H^s(\R^n)$ is a solution to
\begin{align}
\label{eq:nonlocal_Schroedinger}
\begin{cases}
\begin{alignedat}{2}
\left( (-\Delta)^s + q \right) u &= 0 \qquad &&\text{in } \Omega,\\
u &= f \qquad &&\text{in }  \Omega_e,
\end{alignedat}
\end{cases}
\end{align}
for $q\in L^{\frac{n}{2s}}(\Omega)$ satisfying the assumption (Aq) from below. In particular, the condition (Aq) guarantees the well-definedness of the linear, bounded operator $\Lambda_q$. For a definition of the function spaces we refer to Section \ref{sec:prelim} below.

In what follows, building on the results from \cite{CS07,ST10}, we interpret the equation \eqref{eq:nonlocal_Schroedinger} through its Caffarelli-Silvestre extension in the form introduced in \cite{CS07}. For $f\in \widetilde{H}^s(W)$, we set
\begin{align*}
(-\Delta)^s u := -c_s\lim\limits_{x_{n+1}\rightarrow 0} x_{n+1}^{1-2s}\p_{n+1} \tilde{u},
\end{align*}
for a suitable constant $c_s > 0$, where $\tilde{u} \in \dot{H}^{1}(\R^{n+1}_+, x_{n+1}^{1-2s})$  is a solution to 
\begin{align}
\label{eq:CS}
\begin{cases}
\begin{alignedat}{2}
\nabla \cdot x_{n+1}^{1-2s} \nabla \tilde{u} & = 0 \qquad &&\mbox{in } \R^{n+1}_+,\\
\tilde{u} & = f \qquad &&\mbox{in } \Omega_e \times \{0\},\\
c_s\lim\limits_{x_{n+1} \rightarrow 0} x_{n+1}^{1-2s}\p_{n+1} \tilde{u} & =  qu \qquad &&\mbox{in } \Omega,
\end{alignedat}
\end{cases}
\end{align}
with the boundary trace $u(x):= \tilde{u}(x,0) \in H^s(\R^n)$ being well-defined.
We refer to Section \ref{sec:prelim} for a weak formulation of \eqref{eq:CS} which guarantees well-posedness of the problem \eqref{eq:CS} in case that $q\in L^{\frac{n}{2s}}(\Omega)$ satisfies the assumption (Aq).
Here and throughout the article, we will assume the following condition for the potentials $q\in L^{\frac{n}{2s}}(\Omega)$:
\begin{itemize}
\item[(Aq)] There exists a radius $r_0>0$ such that for all $\tilde{q} \in B_{r_0}^{ L^{\frac{n}{2s}}(\Omega)}(q)$ zero is not a Dirichlet eigenvalue of the problem \eqref{eq:nonlocal_Schroedinger}, \eqref{eq:VariableCoeffCS} or \eqref{eq:conduc_main}, respectively.
\end{itemize}
As discussed in Section \ref{sec:prelim}, this can, for instance, be guaranteed if $q>0$ or if $\|q\|_{L^{\infty}(\Omega)}\leq \lambda_1$ is sufficiently small, where $\lambda_1>0$ denotes the first Dirichlet eigenvalue of the operator with vanishing potential.

We remark that alternative, equivalent interpretations of the fractional Schrödinger equation \eqref{eq:nonlocal_Schroedinger} are given in terms of suitable properties of its integral kernel or by spectral interpretations \cite{ST10} (c.f. Section \ref{sec:prelim} for details on our definition of the fractional operator \eqref{eq:nonlocal_Schroedinger}).

\subsection{The main results}
\label{sec:main_result}
Motivated by the classical, local Calder\'on problem, the study of the fractional Calder\'on problem has been the object of an intensive research activity, in which uniqueness, stability and reconstruction results have been deduced (see Section \ref{sec:literature} below). In this article, we seek to provide rather general instability results for the fractional Calder\'on problem as well as the fractional conductivity equation. 

\subsubsection{The fractional Calder\'on problem in general geometries with constant background metric}

We begin by proving instability for the fractional Calder\'on problem in rather general geometries in which no symmetries are present in general.

\begin{theorem}\label{ThmInstability}
Let $s\in (0,1)$. Let $\Omega \subset \R^n$ be open, non-empty, bounded and Lipschitz, $\Omega' \subset \Omega$ be compact, non-empty, Lipschitz and $W \subset \Omega_e$ open, non-empty, Lipschitz. Let $\delta>0$. Assume that $\bar{q} \in L^{\frac{n}{2s}}(\Omega)$ with $\supp(\bar{q}) \subset \Omega'$ satisfies the condition from (Aq) with radius $r_0>0$ and define $K := \{ q \in L^{\frac{n}{2s}} (\Omega) : q \in  \bar{q}+ B_{r_0}^{W_0^{\delta,\frac{n}{2s}}(\Omega')}(0) \}$. Let $\Lambda_q: \widetilde{H}^s(W) \to H^{-s}(W)$ be the Dirichlet-to-Neumann map as in \eqref{eq:DtN} above.

Then there exists $c>0$ such that for any $\varepsilon \in (0, r_0)$, there exist $q_1, q_2 \in K $ with
\begin{align*}
\Vert \Lambda_{q_1} - \Lambda_{q_2} \Vert_{\widetilde{H}^s(W) \to H^{-s}(W)} &\leq \exp\left(-c \varepsilon^{-\frac{1}{\delta(2+\frac{1}{n})}}\right),\\
\Vert q_1 - q_2 \Vert_{L^{\frac{n}{2s}}(\Omega)} &\geq \varepsilon.
\end{align*} 
In particular, if one has the stability property
\begin{align*}
\Vert q_1 - q_2 \Vert_{L^{\frac{n}{2s}}(\Omega)} \leq \omega(\Vert \Lambda_{q_1} -\Lambda_{q_2} \Vert_{\widetilde{H}^s(W) \to H^{-s}(W)} ), \qquad q_1,q_2 \in K,
\end{align*}
then necessarily $\omega(t) \gtrsim \vert \log t \vert^{-\delta(2+\frac{1}{n})}$ for $t$ small (the notation "$\lesssim$" will be introduced in Section \ref{ssec:prel-Notation}).
\end{theorem}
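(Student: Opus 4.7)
The plan is to follow the Mandache-style entropy/compression strategy, adapted to the fractional setting along the lines of \cite{RS18} and \cite{KRS21}. Concretely, I would combine a lower bound on the $\varepsilon$-packing number of $K$ in the $L^{\frac{n}{2s}}(\Omega)$ topology with an upper bound on the $\mu$-covering number of the image $\{\Lambda_q : q \in K\}$ in the operator norm $\widetilde{H}^s(W) \to H^{-s}(W)$, and then apply a pigeonhole argument. The output is precisely a pair $q_1, q_2 \in K$ that is $\varepsilon$-separated in $L^{\frac{n}{2s}}$ while the DtN maps differ by at most $\mu$; balancing $\varepsilon$ and $\mu$ will produce the claimed double-logarithmic relation, with the final statement on the modulus $\omega$ following immediately.

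For the packing of $K$: since $K = \bar{q} + B_{r_0}^{W_0^{\delta,\frac{n}{2s}}(\Omega')}(0)$ and separation is measured in $L^{\frac{n}{2s}}(\Omega)$, the compact embedding $W_0^{\delta,\frac{n}{2s}}(\Omega') \hookrightarrow L^{\frac{n}{2s}}(\Omega)$ has widths decaying like $k^{-\delta/n}$. A standard counting argument (truncating a wavelet or spectral expansion and counting lattice points in the resulting coefficient box) then yields at least $\exp(c_1 \varepsilon^{-n/\delta})$ $\varepsilon$-separated potentials in $K$ for small $\varepsilon$.

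For the compression of the forward map, I would use the Caffarelli--Silvestre extension to rewrite $\Lambda_q$ as a weighted Dirichlet-to-Neumann operator on $\R^{n+1}_+$ and rely on the Alessandrini-type identity
\[
\langle (\Lambda_{q_1}-\Lambda_{q_2}) f_1, f_2 \rangle = \int_\Omega (q_1 - q_2) u_1 u_2 \, dx,
\]
with $u_j$ the CS extension solution for exterior data $f_j$. The key input is the compression / analytic smoothing result of \cite{KRS21} for the constant-coefficient comparison operator obtained by setting $q = 0$: solutions of the pure CS equation with exterior data in a bounded set analytically continue across $\partial\Omega$ with quantitative control. Pairing this with the above identity and projecting $u_1 u_2$ onto a suitable basis of $(W_0^{\delta,\frac{n}{2s}}(\Omega'))^*$ shows that the coefficients have exponentially decaying tails, so that truncation at a finite level gives an $\mu$-covering of $\{\Lambda_q : q \in K\}$ of cardinality $\exp(c_2 |\log \mu|^{2n+1})$.

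Balancing the two estimates, a pair $(q_1,q_2)$ with the required properties exists as soon as $\exp(c_1 \varepsilon^{-n/\delta}) > \exp(c_2 |\log \mu|^{2n+1})$, which is achieved by the choice $\mu = \exp(-c\,\varepsilon^{-n/(\delta(2n+1))}) = \exp(-c\,\varepsilon^{-1/(\delta(2+\frac{1}{n}))})$, giving exactly the stated bound. The main obstacle is producing the analytic smoothing / compression estimate for the CS extension in \emph{general} geometries without any symmetry assumption on $\Omega$; it is here that the comparison-operator approach of \cite{KRS21}, together with its quantitative Runge-type approximation properties, is essential. Once the comparison operator is identified and its analytic smoothing is established, the remainder of the argument is a clean combinatorial pigeonhole.
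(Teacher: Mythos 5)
Your global strategy --- comparing $\Lambda_q$ to a linear, analytically smoothing comparison operator and then running a packing-versus-covering pigeonhole argument in the spirit of Mandache and Di Cristo--Rondi --- is essentially the same as the paper's. The paper packages the combinatorics into the abstract instability theorem of \cite{KRS21} (stated here as Theorem \ref{Thm5.14[KRS21]}), whereas you unpack it by hand; this is a cosmetic difference. Your exponent bookkeeping is also consistent: the embedding $W_0^{\delta,\frac{n}{2s}}(\Omega') \hookrightarrow L^{\frac{n}{2s}}(\Omega)$ has entropy numbers of order $k^{-\delta/n}$, the singular value decay $\sigma_k(A)\lesssim e^{-c k^{1/n}}$ yields a $\mu$-covering of cardinality $\exp(c_2|\log\mu|^{2n+1})$, and balancing indeed produces $\omega(t)\gtrsim |\log t|^{-\delta(2+\frac{1}{n})}$.

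There is, however, a genuine gap where the real work of the theorem lies. The analytic smoothing estimate for the comparison operator $A:f\mapsto u_0|_{\Omega'}$ (with $u_0$ solving the $q=0$ problem) is \emph{not} available in \cite{KRS21}: that reference furnishes the abstract machinery but does not prove analyticity of the fractional Caffarelli--Silvestre harmonic extension in general domains. This is precisely the new input of the present paper, Proposition~\ref{prop:UniformBounds}, and your proposal treats it as a citation rather than something to be established. The paper proves it by writing $u_0(x) = \tilde c_s\int_0^\infty\!\int_{\R^n} K_t(x,z)\,g(z)\,dz\,\frac{dt}{t^{1-s}}$ with $g=(-\Delta)^s u_0$, splitting the $t$-integral at $t=1$, and using that $g\equiv 0$ in $\Omega$ so that the intermediate heat solution $w(\cdot,t)$ vanishes at $t=0$ in $\Omega$; interior analyticity estimates for the heat equation on $\Omega'\Subset\Omega$ then give the bound uniformly in $t$ and, after integration in $t$, the Gevrey-$1$ estimate for $u_0$ on $\Omega'$. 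A related misstatement in your write-up is the claim that solutions ``analytically continue across $\partial\Omega$'': this is false, and the paper explicitly points out the $\dist(x,\partial\Omega)^s$ boundary behaviour which precludes analyticity up to the boundary. What one has, and what suffices, is quantitative \emph{interior} analyticity on a compact $\Omega'\Subset\Omega$. Quantitative Runge approximation is not the relevant tool here either; it plays no role in this argument. Once Proposition~\ref{prop:UniformBounds} is in place, the remainder of your outline coincides with the paper's proof.
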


Let us comment on this result and the main underlying ideas: As in \cite{RS18} Theorem \ref{ThmInstability} shows the severe, logarithmic instability of the fractional Calder\'on problem, illustrating that (up to the precise exponents) stability estimates such as in \cite{RS20} are indeed optimal. However, as in \cite{KRS21} and contrary to \cite{RS18}, in the present article, we do not impose any symmetry assumptions on the underlying domain geometry. Moreover, we only require $L^{\frac{n}{2s}}$ regularity for the potentials. We emphasize that due to the comparably low regularity of the potential, the forward map is not analytically regularizing under these conditions.
In order to nevertheless deduce the claimed instability properties, we rely on the general framework laid out in \cite{KRS21} (building on \cite{DCR03} and \cite{M01}), combining ideas of analytic regularization for a suitable, analytically regularizing comparison problem and transferring the resulting compression estimates encoded in associated entropy number bounds to the original inverse problem. To this end, an important ingredient in the first part of the article consists in the derivation of strong, analytic regularizing properties for a forward comparison operator with respect to the tangential variables in open subsets of $\Omega$. We remark that due to the presence of the Muckenhoupt weight $x_{n+1}^{1-2s}$ the strong local regularization in tangential directions is optimal in classical (non-weighted) function spaces, as at the boundary of $\Omega$ one cannot hope for analytic regularity in the classical sense due to the presence of the characteristic $\dist(x,\partial \Omega)^s$ behaviour \cite{R16}. In order to infer the desired regularity results, we use the heat kernel characterization of the fractional Laplacian and transfer heat bounds to bounds for our nonlocal problem. We refer to \cite{FMMS22, FMMS23} for an earlier study of analyticity properties of the fractional Laplacian in polygons and polyhedra which is used to deduce exponential convergence rates for certain finite element methods in \cite{FMMS23a}. In the context of interior analyticity estimates, we view our heat kernel approach as a short, self-contained alternative to these arguments.

\subsubsection{The variable coefficient fractional Calderón problem}

We next turn to the instability of the fractional Calder\'on problem involving low regularity metrics. We follow the strategy from \cite[Section 5]{KRS21} which builds on an iterative gain of compression by a minimal amount of regularization. Relying on the variable coefficient Caffarelli-Silvestre extension from \cite{ST10} (see also \cite{ATEW18}), in this part of the article, we interpret the equation \eqref{eq:nonlocal_Schroedinger} through its extension, i.e., for $f\in \widetilde{H}^s(W)$, we set
\begin{align*}
(-\nabla \cdot a \nabla)^s u := -c_{s}\lim\limits_{x_{n+1}\rightarrow 0} x_{n+1}^{1-2s}\p_{n+1} \tilde{u},
\end{align*}
for a suitable constant $c_s > 0$. Here $\tilde{u} \in \dot{H}^{1}(\R^{n+1}_+, x_{n+1}^{1-2s})$ is a weak solution to 
\begin{align}\label{eq:VariableCoeffCS}
\begin{cases}
\begin{alignedat}{2}
\nabla \cdot x_{n+1}^{1-2s} \tilde{a} \nabla \tilde{u} & = 0 \qquad &&\mbox{in } \R^{n+1}_+,\\
\tilde{u} & = f \qquad &&\mbox{in } \Omega_e \times \{0\},\\
c_s\lim\limits_{x_{n+1} \rightarrow 0} x_{n+1}^{1-2s}\p_{n+1} \tilde{u} & =  qu \qquad &&\mbox{in } \Omega,
\end{alignedat}
\end{cases}
\end{align}
with the boundary trace $u(x):= \tilde{u}(x,0) \in H^s(\R^n)$ being well-defined. The metric $a \in L^{\infty}(\R^n, \R^{n\times n})$ is symmetric and uniformly elliptic, and $\tilde{a}:= \begin{pmatrix} a & 0 \\ 0 & 1 \end{pmatrix}$. 
Assuming that the condition (Aq) holds, we consider the associated Dirichlet-to-Neumann map
\begin{align}
\label{eq:DtNv}
\Lambda_{a,q}: \widetilde{H}^s(W) \rightarrow H^{-s}(W), \ f \mapsto \restr{ \left( -c_s \lim\limits_{x_{n+1} \rightarrow 0} x_{n+1}^{1-2s}\p_{n+1} \tilde{u}(x,x_{n+1}) \right)}{W}.
\end{align}
For more details on this formulation see also Section \ref{ssec:prel-VariableCoeffFrLapl}.

In this setting without \textit{any} analytic regularization, we prove that the following analogue of Theorem \ref{ThmInstability} holds.

\begin{theorem}\label{thm:low_reg_instab}
Let $s\in (0,1)$. Let $\Omega \subset \R^n$ be open, non-empty, bounded and Lipschitz, $\Omega' \subset \Omega$ be compact, non-empty, Lipschitz and $W \subset \Omega_e$ open, non-empty, Lipschitz. Let $\delta>0$. Assume one of the following:
\begin{enumerate}[(i)]
\item $\bar{q} \in L^{\frac{n}{2s}}(\Omega)$ with $\supp(\bar{q}) \subset \Omega'$ satisfies the condition from (Aq) with radius $r_0>0$.
\item $\bar{q} \in L^{p}(\Omega)$ with $p> \frac{n}{2s}$, or $p=\frac{n}{2s}$ and $\|\bar{q}\|_{L^p(\Omega)}\leq \theta$ for a sufficiently small constant $\theta>0$, such that it satisfies the condition from (Aq) with radius $r_0>0$.
\end{enumerate}
Define $K := \{ q \in L^{p} (\Omega) : q \in  \bar{q}+ B_{r_0}^{W_0^{\delta,p}(\Omega')}(0) \}$ for $p\geq\frac{n}{2s}$ and let $\Lambda_{a,q}: \widetilde{H}^s(W) \to H^{-s}(W)$ be the Dirichlet-to-Neumann map as in \eqref{eq:DtNv} above.

There exists $c>0$ such that for any $\varepsilon \in (0, r_0)$, there exist $q_1, q_2 \in K$ with
\begin{align*}
\Vert \Lambda_{a,q_1} - \Lambda_{a,q_2} \Vert_{\widetilde{H}^s(W) \to H^{-s}(W)} &\leq \exp\left(-c \varepsilon^{-\frac{1}{\delta(2+\frac{5}{n})}}\right),\\
\Vert q_1 - q_2 \Vert_{L^{p}(\Omega)} &\geq \varepsilon.
\end{align*} 
In particular, if one has the stability property
\begin{align*}
\Vert q_1 - q_2 \Vert_{L^{\frac{n}{2s}}(\Omega)} \leq \omega(\Vert \Lambda_{a,q_1} -\Lambda_{a, q_2} \Vert_{\widetilde{H}^s(W) \to H^{-s}(W)} ), \qquad q_1,q_2 \in K,
\end{align*}
then necessarily $\omega(t) \gtrsim \vert \log t \vert^{-\delta(2+\frac{5}{n})}$ for $t$ small.
\end{theorem}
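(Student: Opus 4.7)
The plan is to apply the abstract instability framework of \cite[Section 5]{KRS21} in the setting of the variable-coefficient extension \eqref{eq:VariableCoeffCS}. That framework requires three ingredients: (a) a Lipschitz upper bound for the forward map $q \mapsto \Lambda_{a,q}$; (b) an entropy lower bound for the source set $K$ viewed as a subset of $L^p(\Omega)$; and (c) a sufficiently strong (super-polynomial) compression/entropy upper bound on the operator family $\{\Lambda_{a,q} : q \in K\}$. A Mandache/DeVore--Cerutti--Rossi-type pigeonhole then produces the two potentials $q_1, q_2 \in K$ required by the theorem. The new difficulty compared to Theorem \ref{ThmInstability} is that with merely $L^\infty$ metric $a$ the extension problem has no analytic smoothing, so the heat-kernel bounds of the constant-coefficient part cannot be invoked; they must be replaced by an iteration of a small, quantitative interior regularity gain.

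For (a), I would establish
\[
\|\Lambda_{a,q_1} - \Lambda_{a,q_2}\|_{\widetilde H^s(W) \to H^{-s}(W)} \lesssim \|q_1 - q_2\|_{L^{n/(2s)}(\Omega)}
\]
by testing the weak formulation of \eqref{eq:VariableCoeffCS} with suitable admissible extensions and invoking the Sobolev embedding $H^s(\R^n) \hookrightarrow L^{2n/(n-2s)}(\R^n)$; case (ii) follows from the same argument combined with H\"older's inequality and the subcritical embedding, producing the analogous bound in $L^p$. For (b), the entropy lower bound $e_N(K, L^p(\Omega)) \gtrsim N^{-\delta/n}$ follows from standard Sobolev-type entropy estimates for balls in $W_0^{\delta,p}(\Omega')$ and is independent of the coefficient $a$.

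The core of the argument, and the main obstacle, is (c). Since the degenerate equation $\nabla \cdot x_{n+1}^{1-2s} \tilde a \nabla \tilde u = 0$ has only $L^\infty$ coefficients, one cannot expect analytic interior regularity for $\tilde u$. Instead, I would prove a local Caccioppoli/Meyers-type interior estimate on cylinders $B_r(x_0) \times [0, r)$ with $x_0 \in W$ that upgrades $\tilde u$ from $H^1$ to a slightly better Sobolev/Besov class, with regularity gain $\theta \in (0,1)$ depending only on the ellipticity bounds of $a$ and on $s,n$. The crucial feature is that this gain is iterable on nested cylinders: after $k$ steps one obtains a bound of the form $\|\tilde u\|_{H^{1+k\theta}(K_k)} \leq C_k \|\tilde u\|_{H^1(K_0)}$ with $C_k$ growing at a controlled rate and $K_k \subset K_0$ suitably nested. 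Since the embedding $H^{1+k\theta} \hookrightarrow L^2$ on a bounded set has $N$-th entropy number of order $N^{-(1+k\theta)/n}$, optimizing $k = k(N)$ converts the iteration into a stretched-exponential compression $e_N \lesssim \exp(-c N^\alpha)$ for the associated comparison operator, with explicit exponent $\alpha = \alpha(n,s) > 0$. Via the Lipschitz step of (a), this transfers to the family $\Lambda_{a,q} - \Lambda_{a,\bar q}$ and feeds into the pigeonhole, where it is balanced against the source entropy from (b) to produce the asserted exponential gap.

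The hard part will be the quantitative bookkeeping in (c): carefully tracking the dependence of $C_k$ on $k$, on the shrinking parameters of the cylinders, and on the ellipticity of $\tilde a$, so that the compression is uniform in $q \in K$ and the optimization in $k$ delivers an $\alpha$ large enough to close the pigeonhole. The final exponent $\delta(2+5/n)$ (versus $\delta(2+1/n)$ in Theorem \ref{ThmInstability}) reflects precisely the smaller value of $\alpha$ produced by this iteration compared with the one-shot analytic smoothing available in the constant-coefficient setting; the $+5/n$ contribution encodes the accumulated loss from the many successive applications of the interior estimate.
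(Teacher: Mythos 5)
The abstract framework you invoke is the right one, and items (a) and (b) match the paper. The gap is in (c), where you describe the compression mechanism incorrectly, and the claimed regularity bootstrap would fail.

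You propose an interior estimate that ``upgrades $\tilde u$ from $H^1$ to a slightly better Sobolev/Besov class, with regularity gain $\theta \in (0,1)$'', and then iterate it to reach $\|\tilde u\|_{H^{1+k\theta}(K_k)} \lesssim C_k \|\tilde u\|_{H^1(K_0)}$. For merely $L^\infty$ coefficients $\tilde a$, no such bootstrap exists: De Giorgi--Nash--Moser and Meyers give a single, non-iterable gain (Hölder continuity of $\tilde u$, or $W^{1,p}$ for $p$ slightly above $2$), because the coefficients do not improve under differentiation of the equation. Claiming $H^{1+k\theta}$ regularity for large $k$ in this setting is false. The paper does something genuinely different: it iterates a \emph{compression} gain, not a regularity gain. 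Each step is the composition
\[
L^2(Q_{\Omega_{j-1},R_{j-1}}, x_{n+1}^{1-2s}) \xrightarrow{\ \text{Caccioppoli}\ } H^1(Q_{\Omega_j,R_j}, x_{n+1}^{1-2s}) \hookrightarrow L^2(Q_{\Omega_j,R_j}, x_{n+1}^{1-2s}),
\]
so one only ever uses $H^1$ regularity; the compression comes from the compact embedding $H^1 \hookrightarrow L^2$ in the weighted cylinders, whose singular values are shown to behave like $k^{-1/(n+1)}$ via an explicit eigenfunction decomposition (Bessel functions in $x_{n+1}$ tensored with Dirichlet eigenfunctions of $-\Delta$ on $\Omega$) and a Weyl count in dimension $n+1$. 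Iterating $N$ such steps over a nested family $\Omega = \Omega_0 \supset \cdots \supset \Omega_{N+1} = \Omega'$ with $R_j$ decreasing, and then optimizing $N \sim k^{1/(n+2)}$, yields $\sigma_k(A) \lesssim \exp(-c k^{1/(n+2)})$, which is exactly what produces the exponent $\delta(2+\tfrac{5}{n})$. Your Meyers-type gain $\theta$ never enters; what matters is the singular value decay rate of a fixed compact embedding, amplified by iteration.

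Two smaller issues: the relevant cylinders sit over (and shrink inside) $\Omega$, terminating at $\Omega'$, not around points $x_0\in W$ — the whole point of the comparison operator $A(f) = u_0|_{\Omega'}$ is to estimate the solution in the interior of $\Omega$, with $W$ only sourcing the exterior data. And the $+5/n$ does not arise from ``accumulated loss over many iterations'' in a Meyers sense; it comes directly from $\mu = 1/(n+2)$ (versus $\mu = 1/n$ in the analytic case) when plugged into the exponent $m'(\mu+2)/\mu$ of the abstract instability theorem.
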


Building on the ideas from \cite[Chapter 5]{KRS21} in the context of inverse problems and similar ideas from \cite{BH03} in the context of numerical approximation, we will use a minimal amount of regularization for the forward map by exploiting a suitably weighted Caccioppoli estimate in order to infer the desired compression estimates. Further, we refer to \cite{KM19} and the references therein for related ideas in the context of numerical approximation properties of the constant coefficient fractional Laplacian. These rely on the framework from \cite{BH03} and give rise to closely related compression properties. The central use of Caccioppoli estimates in these general compression bounds are also at the origin of the assumptions (i), (ii) from above. Indeed, these assumptions allow us to absorb the arising boundary contributions into the bulk estimates (see Proposition \ref{PropCacciopoli} and its proof). In our instability argument already a minimal amount of regularization yields sufficient compactness in weighted Sobolev spaces in order to infer a corresponding compression gain in the entropy numbers. We emphasize that in this low regularity setting and contrary to the argument leading to Theorem \ref{ThmInstability}, we also allow for background potentials $\bar{q}$ which are not compactly supported in $\Omega$ and that only the difference $q - \bar{q}$ is required to have compact support. In particular, this will allow us to transfer the instability results for the (variable) coefficient fractional Calder\'on problem to its conductivity version (see Sections \ref{sec:fractional_conductivity} and \ref{sec:low_reg_proof2} below). We remark that, clearly, the framework in Theorem \ref{thm:low_reg_instab} allows to include the setting from Theorem \ref{ThmInstability}. However, due to the different mechanisms (analytic smoothing vs. a minimal amount of smoothing for the forward operator) leading to the respective instability result, we consider both mechanisms of interest in their own right.

\subsubsection{The fractional conductivity equation}
\label{sec:fractional_conductivity}
In this article we also seek to prove an instability result in the case of the fractional Calder\'on problem for the following fractional conductivity equation 
\begin{align}
\label{eq:conduc_main}
\begin{cases}
\begin{alignedat}{2}
\left( (\nabla \cdot)^s (\Theta \cdot\nabla^s) + q \right) u &= 0 \qquad &&\text{in } \Omega,\\
u &= f \qquad &&\text{in }  \Omega_e.
\end{alignedat}
\end{cases}
\end{align}
The operators $(\nabla\cdot)^s, \nabla^s$, which will be defined in detail in Section \ref{ssec:prel-FracCondEq}, are the fractional divergence and gradient. They were introduced in \cite{DGLZ12} as fundamental operators of nonlocal vector calculus, and were then further studied in many subsequent works, among which \cite{C20, C20a, CRZ22, C22}. They can be thought of as fractional counterparts to the usual divergence and gradient operators, and enjoy the familiar properties $(\nabla^s)^* = (\nabla\cdot)^s$ and $(\nabla\cdot)^s\nabla^s = (-\Delta)^s$. The interaction matrix $\Theta$ is defined as $\Theta(x,y):=\gamma(x)^{1/2}\gamma(y)^{1/2}I$, where $I \in \R^{n \times n}$ is the identity matrix and the scalar, positive function $\gamma\in C^\infty(\R^n)$ is the conductivity. In the framework of this problem, in the present article, we discuss the following instability result.

\begin{theorem}\label{ThmInstability_other}
Let $s\in (0,1)$. Let $\Omega \subset \R^n$ be open, non-empty, bounded and Lipschitz, $\Omega' \subset \Omega$ be compact, non-empty, Lipschitz and $W \subset \Omega_e$ open, non-empty, Lipschitz. Let $\gamma\in C^\infty(\R^n)$ satisfy $\gamma^{1/2}-1 \in C^\infty_c(\Omega)$ and $\gamma\geq \underline\gamma$ for some $\underline\gamma \in (0,\infty)$. Let $\delta \in (0,1)$. Assume that $\bar{q} \in L^{p}(\Omega)$ with $p> \frac{n}{2s}$ satisfies the condition from (Aq) with radius $r_0>0$, and define $K := \{ q \in L^{p} (\Omega) : q \in  \bar{q}+ B_{r_0}^{W_0^{\delta,p}(\Omega')}(0) \}$.  Let $\Lambda_{\gamma,q}: \widetilde{H}^s(W) \to H^{-s}(W)$ be the Dirichlet-to-Neumann map defined by
$$ \Lambda_{\gamma,q} : f \mapsto  (\nabla \cdot)^s (\Theta \cdot\nabla^s)u_f|_W .$$

There exists $c>0$ such that for any $\varepsilon \in (0, r_0)$, there exist $q_1, q_2 \in K$ with
\begin{align*}
\Vert \Lambda_{\gamma,q_1} - \Lambda_{\gamma,q_2} \Vert_{\widetilde{H}^s(W) \to H^{-s}(W)} &\leq \exp\left(-c \varepsilon^{-\frac{1}{\delta(2+\frac{5}{n})}}\right),\\
\Vert q_1 - q_2 \Vert_{L^{p}(\Omega)} &\geq \varepsilon.
\end{align*} 
\end{theorem}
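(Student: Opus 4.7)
The plan is to reduce Theorem \ref{ThmInstability_other} to Theorem \ref{thm:low_reg_instab}(ii) applied with the flat background $a\equiv\Id$, via the Liouville-type substitution that is standard in the analysis of the fractional conductivity equation (see e.g.\ \cite{C20, CRZ22}). For a solution $u$ of \eqref{eq:conduc_main}, I would set $w := \gamma^{1/2} u$. The nonlocal product identity available for the separable interaction matrix $\Theta(x,y) = \gamma(x)^{1/2}\gamma(y)^{1/2}I$ rewrites \eqref{eq:conduc_main} as a fractional Schrödinger equation
$$((-\Delta)^s + \tilde q)w = 0 \quad \text{in } \Omega, \qquad w|_{\Omega_e} = f,$$
with effective potential of the form $\tilde q = \gamma^{-1} q + \beta$, where the $\gamma$-dependent correction $\beta := -\gamma^{-1/2}(-\Delta)^s\gamma^{1/2}$ is smooth and bounded on $\R^n$ (using $\gamma^{1/2}-1 \in C_c^\infty(\Omega)$ and $\gamma \geq \underline\gamma > 0$). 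Since $\gamma^{1/2} \equiv 1$ on $\Omega_e$, the exterior trace is preserved and the Dirichlet-to-Neumann maps coincide pointwise on $W$: $\Lambda_{\gamma,q} = \Lambda_{\tilde q}$ as operators $\widetilde{H}^s(W) \to H^{-s}(W)$.

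Next, I would verify that the transformed problem falls into the framework of Theorem \ref{thm:low_reg_instab}(ii). The background potential is $\bar{\tilde q} := \gamma^{-1}\bar q + \beta \in L^p(\Omega)$, $p > \frac{n}{2s}$; note that $\bar{\tilde q}$ need not be compactly supported in $\Omega'$, which is precisely the case accommodated by part (ii) of Theorem \ref{thm:low_reg_instab}, where only $q - \bar q$ is required to have compact support. Since $\gamma^{-1}-1 \in C_c^\infty(\Omega)$ and $\gamma, \gamma^{-1}$ are bounded, multiplication by $\gamma^{-1}$ is a linear isomorphism of $W_0^{\delta,p}(\Omega')$ for $\delta \in (0,1)$, so $K$ is mapped under $q \mapsto \tilde q$ to a set $\widetilde K \subset \bar{\tilde q} + B_{\tilde r_0}^{W_0^{\delta,p}(\Omega')}(0)$ with $\tilde r_0 \sim r_0$. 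The (Aq) condition for $\bar q$ descends to (Aq) for $\bar{\tilde q}$ because the Liouville substitution is an isomorphism of the relevant solution spaces and preserves the Dirichlet eigenvalues.

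Applying Theorem \ref{thm:low_reg_instab}(ii) to $\widetilde K$ then produces $\tilde q_1, \tilde q_2 \in \widetilde K$ with $\|\tilde q_1-\tilde q_2\|_{L^p(\Omega)} \gtrsim \varepsilon$ and $\|\Lambda_{\tilde q_1}-\Lambda_{\tilde q_2}\|_{\widetilde{H}^s(W)\to H^{-s}(W)} \leq \exp\bigl(-c\varepsilon^{-1/(\delta(2+5/n))}\bigr)$. Undoing the substitution yields $q_j \in K$ with $q_1 - q_2 = \gamma(\tilde q_1-\tilde q_2)$, so the lower bound $\|q_1-q_2\|_{L^p(\Omega)} \gtrsim \varepsilon$ follows from boundedness of $\gamma$ from above and below; the DN-map identification $\Lambda_{\gamma, q_j} = \Lambda_{\tilde q_j}$ gives the matching upper bound on $\|\Lambda_{\gamma,q_1}-\Lambda_{\gamma,q_2}\|_{\widetilde{H}^s(W)\to H^{-s}(W)}$. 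Absorbing the $\gamma$-dependent constants into $c$ produces the claimed bounds.

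The main obstacle I expect is the rigorous justification of the Liouville identity at the given low regularity $u \in H^s$, $q \in L^p$, together with the careful tracking of how the admissibility radius $r_0$ and the (Aq) condition are transported through the substitution. These points are, however, already addressed in the literature on the fractional conductivity problem, so the reduction is essentially formal once those identities are invoked; the actual analytic content of the theorem is borrowed wholesale from Theorem \ref{thm:low_reg_instab}(ii).
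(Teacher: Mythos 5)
Your proposal follows essentially the same route as the paper: the fractional Liouville reduction $w=\gamma^{1/2}u$ transforms the conductivity problem into a Schr\"odinger problem with potential $Q = q_\gamma + \gamma^{-1}q$, the (Aq) condition and the $W_0^{\delta,p}(\Omega')$-balls are transported via the multiplier $\gamma^{-1}$, Theorem~\ref{thm:low_reg_instab} is applied with the non-compactly-supported background $\bar Q$, and the result is pulled back. One minor imprecision: $\Lambda_{\gamma,q}$ and $\Lambda_{Q}$ do \emph{not} coincide as operators $\widetilde H^s(W)\to H^{-s}(W)$ — on $W$ they differ by multiplication by $q_\gamma|_W$, which is $\gamma$-dependent but $q$-independent — so only the differences $\Lambda_{\gamma,q_1}-\Lambda_{\gamma,q_2}=\Lambda_{Q_1}-\Lambda_{Q_2}$ agree (equivalently, the paper establishes \eqref{second-norm-equivalence} via the Alessandrini pairing \eqref{EqReductionInnerProductEquivalence}); since you only invoke the difference, this does not affect the argument.
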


Relying on ideas first introduced in \cite{C20}, our argument for this instability result for the fractional conductivity problem makes use of the fractional Liouville transform to reduce the fractional conductivity equation to a fractional Schrödinger equation. On this level, we may invoke the results from the previous two sections. We highlight that in this context, due to the nonlocality of the fractional Liouville transform, we cannot avoid dealing with background potentials $\bar{q}$ with non-compact support.

\subsubsection{Single measurement instability results}
Last but not least, as corollaries of our above results, we will consider instability properties for single measurement reconstruction problems. These results will be deduced as simple consequences of our main results in Theorems \ref{ThmInstability} and \ref{ThmInstability_other}.\\
For the single measurement fractional Calder\'{o}n problem a similar logarithmic stability result as for the infinitely many measurements case was proven in \cite{R21}. We complement this result with a lower bound, once again showing that the logarithmic stability result is indeed optimal.

\begin{corollary}\label{CorSingleMeasFractCald}
Let $s\in (0,1)$. Let $\Omega \subset \R^n$ be open, non-empty, bounded and Lipschitz, $\Omega' \subset \Omega$ compact, non-empty, Lipschitz and $W \subset \Omega_e$ open, non-empty, Lipschitz. Let $\delta>0$. Assume that $\bar{q} \in L^{\frac{n}{2s}}(\Omega)$ with $\supp(\bar{q}) \subset \Omega'$ satisfies the condition from (Aq) with radius $r_0>0$, and define $K := \{ q \in L^{\frac{n}{2s}} (\Omega) : q \in  \bar{q}+ B_{r_0}^{W_0^{\delta,\frac{n}{2s}}(\Omega')}(0) \}$. Let $f \in \widetilde{H}^s(W) \setminus \{0\}$ and let $\Lambda_q: \widetilde{H}^s(W) \to H^{-s}(W)$ be the Dirichlet-to-Neumann map as in \eqref{eq:DtN}.

There exists $c>0$ such that for any $\varepsilon \in (0, r_0)$, there exist $q_1, q_2 \in K$ with
\begin{align*}
\Vert \Lambda_{q_1}(f) - \Lambda_{q_2}(f) \Vert_{H^{-s}(W)} &\leq \exp\left(-c \varepsilon^{-\frac{1}{\delta(2+\frac{1}{n})}}\right),\\
\Vert q_1 - q_2 \Vert_{L^{\frac{n}{2s}}(\Omega)} &\geq \varepsilon.
\end{align*} 
In particular, if one has the stability property
\begin{align*}
\Vert q_1 - q_2 \Vert_{L^{\frac{n}{2s}}(\Omega)} \leq \omega(\Vert \Lambda_{q_1}(f) -\Lambda_{q_2}(f) \Vert_{H^{-s}(W)} ), \qquad q_1,q_2 \in K,
\end{align*}
then necessarily $\omega(t) \gtrsim \vert \log t \vert^{-\delta(2+\frac{1}{n})}$ for $t$ small.
\end{corollary}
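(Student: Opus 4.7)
The plan is to derive Corollary \ref{CorSingleMeasFractCald} as an essentially immediate consequence of Theorem \ref{ThmInstability}, by exploiting the fact that the fixed datum $f$ contributes only a multiplicative constant to the measurement. The underlying estimate is the elementary operator inequality
\begin{equation*}
\|\Lambda_{q_1}(f) - \Lambda_{q_2}(f)\|_{H^{-s}(W)} \le \|\Lambda_{q_1} - \Lambda_{q_2}\|_{\widetilde{H}^s(W) \to H^{-s}(W)} \|f\|_{\widetilde{H}^s(W)},
\end{equation*}
so the single-measurement quantity is controlled by the full operator norm up to the finite constant $C_f := \|f\|_{\widetilde{H}^s(W)}$, which does not depend on $\varepsilon$ or on $q_1, q_2 \in K$.

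First I would fix $f$ and, for any given $\varepsilon \in (0, r_0)$, apply Theorem \ref{ThmInstability} to obtain potentials $q_1, q_2 \in K$ satisfying $\|q_1 - q_2\|_{L^{n/(2s)}(\Omega)} \ge \varepsilon$ together with $\|\Lambda_{q_1} - \Lambda_{q_2}\|_{\widetilde{H}^s(W) \to H^{-s}(W)} \le \exp(-c_0 \varepsilon^{-1/(\delta(2+1/n))})$ for some $c_0 > 0$ depending only on the data. Combining this with the operator inequality yields
\begin{equation*}
\|\Lambda_{q_1}(f) - \Lambda_{q_2}(f)\|_{H^{-s}(W)} \le C_f \exp\bigl(-c_0 \varepsilon^{-1/(\delta(2+1/n))}\bigr),
\end{equation*}
and the prefactor $C_f$ is then absorbed into the exponent by reducing $c_0$ to a smaller $c > 0$, since the super-polynomial growth of $\varepsilon^{-1/(\delta(2+1/n))}$ as $\varepsilon \to 0$ dominates any fixed multiplicative factor $\log C_f$; the large-$\varepsilon$ range is handled by using boundedness of $K$ in $L^{n/(2s)}(\Omega)$ and, if necessary, a further reduction of $c$. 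This produces the first inequality of the corollary.

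The second part of the claim, the logarithmic lower bound on $\omega$, is obtained by a direct inversion applied to the pair just constructed: inserting it into the postulated modulus estimate gives $\varepsilon \le \omega(\exp(-c\varepsilon^{-1/(\delta(2+1/n))}))$, and the substitution $t = \exp(-c\varepsilon^{-1/(\delta(2+1/n))})$, i.e., $\varepsilon = (c/|\log t|)^{\delta(2+1/n)}$, yields $\omega(t) \gtrsim |\log t|^{-\delta(2+1/n)}$ as $t \to 0$. No substantial new obstacle appears in this argument: all the analytic content sits in Theorem \ref{ThmInstability}, and the role of the corollary is simply to record that a single, arbitrarily prescribed nonzero exterior datum already suffices to witness the same logarithmic instability, with the dependence on $\|f\|_{\widetilde{H}^s(W)}$ fully hidden in the constant $c$.
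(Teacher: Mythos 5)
Your argument is correct and coincides with the paper's own proof: both rely on the operator-norm bound $\|\Lambda_{q_1}(f) - \Lambda_{q_2}(f)\|_{H^{-s}(W)} \le \|\Lambda_{q_1} - \Lambda_{q_2}\|_{\widetilde{H}^s(W) \to H^{-s}(W)} \|f\|_{H^s(W)}$ applied to the pair $q_1,q_2$ produced by Theorem \ref{ThmInstability}. You merely spell out the routine absorption of $\|f\|_{H^s(W)}$ into the exponential constant and the inversion step leading to the bound on $\omega$, which the paper leaves implicit with ``the result follows.''
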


\begin{remark}
We remark that with the same argument, using Theorem \ref{thm:low_reg_instab}, we can derive the analogous single measurement instability in the low regularity anisotropic setting.
\end{remark}

Moreover, we deduce an instability result for the single measurement reconstruction problem for the fractional conductivity equation as in Section \ref{sec:fractional_conductivity}.

\begin{corollary}\label{CorSingleMeasFractCond}
Let $s\in (0,1)$. Let $\Omega \subset \R^n$ be open, non-empty, bounded and Lipschitz, $\Omega' \subset \Omega$ compact, non-empty, Lipschitz and $W \subset \Omega_e$ open, non-empty, Lipschitz. Let $\gamma\in C^\infty(\R^n)$ satisfy $\gamma^{1/2}-1 \in C^\infty_c(\Omega)$ and $\gamma\geq \underline\gamma$ for some $\underline\gamma \in (0,\infty)$. Let $\delta>0$. Assume that $\bar{q} \in L^{p}(\Omega)$ with $p> \frac{n}{2s}$ satisfies the condition from (Aq) with radius $r_0>0$, and define $K := \{ q \in L^{p} (\Omega) : q \in  \bar{q}+ B_{r_0}^{W_0^{\delta,p}(\Omega')}(0) \}$. Let $f \in \widetilde{H}^s(W)$ and let $\Lambda_{\gamma,q}: \widetilde{H}^s(W) \to H^{-s}(W)$ be the Dirichlet-to-Neumann map as in Theorem \ref{ThmInstability_other}.\\
There exists $c>0$ such that for any $\varepsilon \in (0, r_0)$, there exist $q_1, q_2 \in K$ with
\begin{align*}
\Vert \Lambda_{\gamma,q_1}(f) - \Lambda_{\gamma,q_2}(f) \Vert_{H^{-s}(W)} &\leq \exp\left(-c \varepsilon^{-\frac{1}{\delta(2+\frac{5}{n})}}\right),\\
\Vert q_1 - q_2 \Vert_{L^{p}(\Omega)} &\geq \varepsilon.
\end{align*}
\end{corollary}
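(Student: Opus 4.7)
The plan is to derive Corollary \ref{CorSingleMeasFractCond} as an essentially immediate consequence of Theorem \ref{ThmInstability_other}, using only the sub-multiplicativity of the operator norm. If $f = 0$ the claim is vacuous (one only needs $q_1,q_2\in K$ with $\Vert q_1-q_2\Vert_{L^p}\geq \varepsilon$, which Theorem \ref{ThmInstability_other} already supplies), so I will assume $M := \Vert f \Vert_{\widetilde{H}^s(W)} > 0$ and set $\alpha := \frac{1}{\delta(2+5/n)}$.

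First, I would apply Theorem \ref{ThmInstability_other} with the same data $(\Omega, \Omega', W, \delta, p, \gamma, \bar q, r_0)$ to obtain a constant $c_0 > 0$ such that for every $\varepsilon \in (0, r_0)$ there exist $q_1, q_2 \in K$ with $\Vert q_1 - q_2 \Vert_{L^p(\Omega)} \geq \varepsilon$ and
$$
\Vert \Lambda_{\gamma, q_1} - \Lambda_{\gamma, q_2} \Vert_{\widetilde{H}^s(W) \to H^{-s}(W)} \leq \exp\bigl(-c_0\, \varepsilon^{-\alpha}\bigr).
$$
Testing this inequality on the fixed $f$ and using $\Vert Tf \Vert \leq \Vert T \Vert_{\mathrm{op}} \Vert f \Vert$ yields, for the same pair $(q_1, q_2)$,
$$
\Vert \Lambda_{\gamma, q_1}(f) - \Lambda_{\gamma, q_2}(f) \Vert_{H^{-s}(W)} \leq M \exp\bigl(-c_0\, \varepsilon^{-\alpha}\bigr).
$$

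The remaining step is to absorb the prefactor $M$ into the exponential, i.e.\ to exhibit $c>0$ satisfying $M e^{-c_0 \varepsilon^{-\alpha}} \leq e^{-c \varepsilon^{-\alpha}}$ for all $\varepsilon\in(0,r_0)$. Taking logarithms, this is equivalent to $(c_0-c)\varepsilon^{-\alpha} \geq \log M$, and since $\varepsilon^{-\alpha}>r_0^{-\alpha}$ on $(0,r_0)$ any $c\in(0,\, c_0 - r_0^{\alpha} \max(\log M, 0))$ works. This interval is nonempty as long as $r_0^{\alpha}\max(\log M,0)<c_0$, in which case the corollary follows at once, with $c$ depending on $M$, $r_0$, $c_0$, $n$, $\delta$, $s$.

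The single point that requires more than bookkeeping is the edge case $r_0^{\alpha}\log M \geq c_0$, which can occur if $\Vert f \Vert_{\widetilde{H}^s(W)}$ is very large relative to the natural scale of the problem. In that regime I would split $(0,r_0)$ at a threshold $\varepsilon_* > 0$: on $(0,\varepsilon_*)$ the absorption above applies after choosing $c$ sufficiently small, while on $[\varepsilon_*, r_0)$ the target $\exp(-c\varepsilon^{-\alpha})$ is bounded below by a fixed positive constant and is dominated by the uniform operator-norm bound $2\sup_{q\in K}\Vert \Lambda_{\gamma, q}\Vert_{\widetilde{H}^s(W)\to H^{-s}(W)}\cdot M<\infty$, finite by (Aq) and the well-posedness estimates recalled in Section \ref{sec:prelim}; for such $\varepsilon$ one may take any $q_1,q_2\in K$ with $\Vert q_1-q_2\Vert_{L^p}\geq\varepsilon$, whose existence is again guaranteed by applying Theorem \ref{ThmInstability_other} at that $\varepsilon$. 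No new analytic ingredient beyond Theorem \ref{ThmInstability_other} is needed.
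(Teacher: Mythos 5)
Your core argument is exactly the paper's: take the pair $(q_1,q_2)$ produced by Theorem \ref{ThmInstability_other}, bound the single-measurement discrepancy by the operator norm times $\Vert f\Vert_{\widetilde{H}^s(W)}$, and absorb the latter constant into the exponential (this is precisely the short argument the paper writes out for Corollary \ref{CorSingleMeasFractCald} and then transfers verbatim). So the main route is correct and matches the source.

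The extra care you devote to the constant absorption is well placed, but the remedy you propose for the edge case $r_0^\alpha\log M\ge c_0$ does not close the gap. On $[\varepsilon_*,r_0)$ you invoke the bound $\Vert\Lambda_{\gamma,q_1}(f)-\Lambda_{\gamma,q_2}(f)\Vert_{H^{-s}(W)}\le 2M\sup_{q\in K}\Vert\Lambda_{\gamma,q}\Vert$ and assert it is controlled by the target $\exp(-c\varepsilon^{-\alpha})$. But the target is strictly below $1$ for every $\varepsilon>0$ and every $c>0$, whereas $2M\sup_{q\in K}\Vert\Lambda_{\gamma,q}\Vert$ typically exceeds $1$ (and certainly does in the very regime $M$ large that you are addressing). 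Splitting at $\varepsilon_*$ also does not help if you instead keep the finer bound $M\exp(-c_0\varepsilon^{-\alpha})$: the binding constraint $M\exp(-c_0\varepsilon^{-\alpha})\le\exp(-c\varepsilon^{-\alpha})$ occurs as $\varepsilon\uparrow r_0$, which lies in the upper piece $[\varepsilon_*,r_0)$, and forces $c\le c_0-r_0^\alpha\log M$ just as before; the lower piece $(0,\varepsilon_*)$ never was the problem. The correct way to view this technicality is that the abstract instability result (Theorem \ref{Thm5.14[KRS21]}) only asserts the conclusion for $\varepsilon$ sufficiently small, and the corollary --- like the theorems it rests on --- should be read modulo that caveat; under that reading the absorption of $M$ into the exponent is immediate, as you correctly observe for the main case. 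So: right approach, right main step, but the patch for the large-$M$ regime is spurious and should be replaced by the ``$\varepsilon$ small enough'' observation (or simply dropped, as the paper does).
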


Making use of the Liouville transform and the single measurement stability result from \cite{R21}, we complement the single measurement instability estimate for the fractional conductivity by a corresponding stability result. As in \cite{R21}, however, we here impose stronger regularity assumptions on the potentials than for the instability result.

\begin{proposition}\label{PropSingleMeasStabFractCond}
Let $s\in (0,1)$. Let $\Omega \subset \R^n$ be open, non-empty, $C^2$-regular and bounded, $\Omega' \subset \Omega$ compact, non-empty, $C^2$-regular and $W \subset \Omega_e$ open, non-empty and $C^2$-regular. Assume that $\overline{\Omega} \cap \overline{W} = \emptyset$. Let $\gamma\in C^\infty(\R^n)$ satisfy $\gamma^{1/2}-1 \in C^\infty_c(\Omega)$ and $\gamma\geq \underline\gamma$ for some $\underline\gamma \in (0,\infty)$. Let $\Lambda_{\gamma,q}: \widetilde{H}^s(W) \to H^{-s}(W)$ be the Dirichlet-to-Neumann map as in Theorem \ref{ThmInstability_other}.\\
Let $f \in \widetilde{H}^{s+\varepsilon}(W) \setminus \{0\}$ for some $\varepsilon>0$, and let $F>0$ be such that
\begin{align*}
\frac{\Vert f \Vert_{H^s(W)}}{\Vert f \Vert_{L^2(W)}} \leq F.
\end{align*}
Assume that $q_1,q_2 \in C^{0,s}(\Omega)$ with $\supp(q_2-q_1) \subset \Omega'$ satisfy the bound
\begin{align*}
\Vert q_j \Vert_{C^{0,s}(\Omega)} \leq E < \infty \quad \text{for } j \in \{1,2\}.
\end{align*}
Then there exists a modulus of continuity $\omega(t)$ with $\omega(t) \leq c \vert \log(ct) \vert^{-\mu}$ for $t\in (0,1]$ and some constants $\mu>0$ and $c>1$ such that
\begin{align*}
\Vert q_1 - q_2 \Vert_{L^\infty(\Omega)} \leq \omega \left( \Vert \Lambda_{\gamma,q_1}(f) - \Lambda_{\gamma,q_2}(f) \Vert_{H^{-s}(W)} \right), 
\end{align*}
where $\mu$ and $c$ only depend on $\Omega$, $W$, $s$, $E$, $F$, $\gamma$, $n$, $\Vert f \Vert_{H^{s+\varepsilon}(W)}$, $\dist(\Omega', \partial\Omega)$.
\end{proposition}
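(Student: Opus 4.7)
The plan is to reduce the fractional conductivity problem to a fractional Schr\"odinger problem via the fractional Liouville transform, apply the single measurement stability result of \cite{R21} to the Schr\"odinger formulation, and translate the conclusion back. This parallels the instability reduction used for Corollary \ref{CorSingleMeasFractCond}, with the key additional task of quantitatively tracking the dependence of all constants.

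\textbf{Step 1 (Liouville reduction).} Set $m_\gamma := \gamma^{1/2}$ and observe that $m_\gamma \equiv 1$ in $\Omega_e$ since $m_\gamma - 1 \in C_c^\infty(\Omega)$. Let $u_j \in H^s(\R^n)$ solve \eqref{eq:conduc_main} with potential $q_j$ and exterior datum $f$, and set $w_j := m_\gamma u_j$. By the fractional Liouville identity (see \cite{C20}), $w_j$ satisfies
\[ ((-\Delta)^s + \widetilde q_j) w_j = 0 \text{ in } \Omega, \qquad w_j = f \text{ in } \Omega_e, \]
with transformed potential $\widetilde q_j := m_\gamma^{-1}(-\Delta)^s m_\gamma + \gamma^{-1} q_j$. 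Since $m_\gamma - 1 \in C_c^\infty(\Omega)$ and $\gamma \geq \underline\gamma > 0$, the first summand is smooth and bounded on $\Omega$ with norms controlled by $\gamma$, while the second is in $C^{0,s}(\Omega)$ with norm $\leq \underline\gamma^{-1} E$. Hence $\widetilde q_j \in C^{0,s}(\Omega)$ with $\Vert \widetilde q_j\Vert_{C^{0,s}(\Omega)} \leq \widetilde E$, where $\widetilde E$ depends only on $E, \gamma, \underline\gamma$. Because $u_j \mapsto w_j$ is a linear isomorphism preserving the zero exterior datum, (Aq) for $q_j$ transfers to (Aq) for $\widetilde q_j$. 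Finally, $\widetilde q_1 - \widetilde q_2 = \gamma^{-1}(q_1 - q_2)$, so $\supp(\widetilde q_1 - \widetilde q_2) \subset \Omega'$.

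\textbf{Step 2 (Equivalence of the single-measurement data).} Using $\overline\Omega \cap \overline W = \emptyset$ and $\gamma \equiv 1$ in $\Omega_e$, the interaction kernel $\Theta(x,y) = m_\gamma(x) m_\gamma(y) I$ reduces to $I$ whenever either argument lies in $W$; together with $w_j|_W = f$, a direct computation (cf.\ \cite{C20}) yields
\[ \Lambda_{\gamma,q_j}(f)\big|_W = \Lambda_{\widetilde q_j}(f)\big|_W \text{ in } H^{-s}(W), \]
where $\Lambda_{\widetilde q_j}$ is the DtN map of the fractional Schr\"odinger problem with potential $\widetilde q_j$. Consequently $\Vert \Lambda_{\widetilde q_1}(f) - \Lambda_{\widetilde q_2}(f)\Vert_{H^{-s}(W)} = \Vert \Lambda_{\gamma,q_1}(f) - \Lambda_{\gamma,q_2}(f)\Vert_{H^{-s}(W)}$.

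\textbf{Step 3 (Schr\"odinger stability and conclusion).} By Steps 1--2 the pair $(\widetilde q_1, \widetilde q_2)$ meets the hypotheses of the single-measurement logarithmic stability theorem of \cite{R21} for the fractional Schr\"odinger equation on the geometry $(\Omega, \Omega', W)$ with the datum $f \in \widetilde H^{s+\varepsilon}(W)\setminus\{0\}$. This produces a modulus $\widetilde\omega(t) \leq \widetilde c \vert \log(\widetilde c t)\vert^{-\widetilde \mu}$ with $\widetilde c, \widetilde\mu$ depending only on $\Omega, W, s, n, \widetilde E, F, \Vert f\Vert_{H^{s+\varepsilon}(W)}, \dist(\Omega', \partial\Omega)$, such that
\[ \Vert \widetilde q_1 - \widetilde q_2\Vert_{L^\infty(\Omega)} \leq \widetilde\omega\bigl(\Vert \Lambda_{\widetilde q_1}(f) - \Lambda_{\widetilde q_2}(f)\Vert_{H^{-s}(W)}\bigr). \]
Since $q_1 - q_2 = \gamma(\widetilde q_1 - \widetilde q_2)$, one has $\Vert q_1 - q_2\Vert_{L^\infty(\Omega)} \leq \Vert\gamma\Vert_{L^\infty}\Vert\widetilde q_1 - \widetilde q_2\Vert_{L^\infty}$, and combining with Step 2 gives the claim with $\omega(t) := \Vert\gamma\Vert_{L^\infty}\widetilde\omega(t)$, which retains the form $c \vert\log(ct)\vert^{-\mu}$.

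\textbf{Main obstacle.} The principal technical point is the careful verification of the Liouville identity in its full form (with a non-zero $q$) together with a quantitative $C^{0,s}$-bound for the additional term $m_\gamma^{-1}(-\Delta)^s m_\gamma$: although nonlocal, it must be shown to be a smooth, bounded function of $x$ in all of $\Omega$ under the assumption $m_\gamma - 1 \in C_c^\infty(\Omega)$, with norms depending only on $\gamma$ and $\underline\gamma$. Once this is in place and (Aq) is transferred, the application of \cite{R21} is a black-box invocation and the final translation of norms is straightforward.
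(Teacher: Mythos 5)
Your overall strategy matches the paper's: reduce the conductivity problem to a Schrödinger problem via the fractional Liouville transform, track the constants $F$ and $E$ through the reduction, invoke the single-measurement stability result of \cite{R21}, and translate the conclusion back. The paper's proof does exactly this. However, your Step~2 contains a genuine flaw in the reasoning.

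You claim that $\Theta(x,y)=m_\gamma(x)m_\gamma(y)I$ ``reduces to $I$ whenever \emph{either} argument lies in $W$,'' and conclude the pointwise identity $\Lambda_{\gamma,q_j}(f)\big|_W=\Lambda_{\tilde q_j}(f)\big|_W$ for each $j$ individually. Both assertions are incorrect. If $x\in W$ then $m_\gamma(x)=1$, so $\Theta(x,y)=m_\gamma(y)I$, which is \emph{not} the identity for $y\in\supp(\gamma^{1/2}-1)$; since $(\nabla\cdot)^s$ involves an integral over all $y$, the interaction matrix does not drop out. Indeed, applying the Liouville identity at a point $x\in W$, where $\gamma^{1/2}(x)=1$ and $q(x)=0$, gives
\begin{align*}
(\nabla\cdot)^s(\Theta\cdot\nabla^s u_f)(x)=(-\Delta)^s w_f(x)+q_\gamma(x)\,w_f(x),
\end{align*}
and $q_\gamma=-\gamma^{-1/2}(-\Delta)^s(\gamma^{1/2}-1)$ is generically non-vanishing on $W$ by nonlocality, so the individual DtN data differ by the term $q_\gamma f\big|_W$. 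Fortunately this term is identical for $q_1$ and $q_2$ (since $w_{f}|_{\Omega_e}=f$ in both cases) and cancels in the difference, so your final displayed conclusion for the \emph{difference} is still correct — but you need to argue via the difference, or, as the paper does, via the bilinear form identity $\langle(\Lambda_{\gamma,q_1}-\Lambda_{\gamma,q_2})f,h\rangle=\langle(\Lambda_{Q_1}-\Lambda_{Q_2})(\gamma^{1/2}f),\gamma^{1/2}h\rangle$ and the resulting norm equivalence. Separately, your transformed potential $\tilde q_j=m_\gamma^{-1}(-\Delta)^s m_\gamma+\gamma^{-1}q_j$ has the wrong sign on the conductivity term (the paper's $q_\gamma$ carries a minus sign); this is a harmless slip for the regularity estimates but worth fixing. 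With Step~2 repaired along these lines, the rest of your argument (Steps~1 and~3) aligns with the paper.
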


\subsection{Relation to the literature}
\label{sec:literature}
The results of Theorem \ref{ThmInstability} - \ref{ThmInstability_other} and Corollaries \ref{CorSingleMeasFractCald}, \ref{CorSingleMeasFractCond} should be viewed in the context of the strong activity on fractional Calder\'on type problems: The fractional Calder\'on problem was introduced in the seminal work \cite{GSU20} in which partial data uniqueness results for $L^{\infty}$ regular potentials were derived. Building on this, low regularity partial data uniqueness and optimal stability results \cite{RS20, RS18}, uniqueness in the presence of anisotropic background metrics \cite{GLX17}, single measurement reconstruction and stability properties \cite{GRSU20, R21} as well as reconstruction based on monotonicity methods \cite{HL20, HL20a} and finite dimensional versions \cite{RS19} were deduced. In \cite{C20} a conductivity formulation and a Liouville reduction was introduced. This was further extended in \cite{C20a,CMRU20,C22,CRZ22,RZ24}.
We also point to \cite{RS20a,LLR20,CLR20,BGU21,L21,L23,CdHS22} for a non-exhaustive list of further results on nonlinear and on related fractional equations as well as to \cite{GU21, CGRU23, LLU23,Lin23} for works connecting the local and fractional Calder\'on problems and to \cite{F24, FGKU21, QU22, C23, CO23,R23} for novel results on the reconstruction of metrics in fractional Calder\'on type problems.
Building on Mandache's \cite{M01} and Di Cristo-Rondi's \cite{DCR03} work, in \cite{RS18} a first instability analysis for the fractional Calder\'on problem was carried out -- showing that contrary to the very strong uniqueness properties, the modulus of continuity does \emph{not} improve substantially compared to the classical Calder\'on problem. As in \cite{M01} and \cite{DCR03}, the results of \cite{RS18} however relied on the use of strong symmetries in the operators and domains.
In \cite{KRS21} a systematic study of instability mechanisms was taken up which allowed to robustify the previous instability mechanisms for a large class of inverse problems. It is this framework which also forms the foundation for our current analysis. Here we are confronted with novel challenges arising due to nonlocality (e.g., in having to consider non-constant background potentials).

\subsection{Outline of the article}
\label{sec:outline}
The remainder of the article is structured as follows: In Section \ref{sec:prelim} we first discuss preliminary results on the precise set-up of our nonlocal equations and some fundamental quantities for our instability framework. In Section \ref{sec:instab} we then turn to proving our instability results for the constant coefficient fractional Calder\'on problem. In particular, this includes a regularity analysis of a comparison operator in Section \ref{sec:reg}. Next, in Section \ref{sec:low_reg} we present the argument for the compression and instability estimates in the presence of low regularity background metrics. Here a major ingredient is a regularity gain obtained through a Caccioppoli estimate together with entropy bounds in weighted $L^2$ based function spaces. In Section \ref{SectionProofCondEq} we focus on our last model example, the fractional conductivity equation, for which we also deduce exponential instability properties. After that, in Section \ref{sec:single_meas}, we prove the instability results for the single measurement reconstruction problem. Last but not least, the main body of our text is complemented by an appendix in which we collect (well-known) analyticity estimates for the heat equation for the convenience of the reader.

\section{Preliminaries}
\label{sec:prelim}

We start with some preliminaries about the function spaces and operators which will appear in our arguments.

\subsection{Entropy numbers}\label{ssec:prel-entropynumbers}
We recall some definitions and properties of entropy numbers needed for our analysis. Our main reference here is \cite{KRS21}, which builds on \cite{EdmundsTriebel1996} and \cite{Kolmogorov1959}.

\begin{definition}
Let $X$, $Y$ be Banach spaces and let $A: X \to Y$ be a bounded linear operator. For any $k\geq1$, the $k$-th entropy number $e_k(A)$ of $A$ is defined by
\begin{align*}
e_k(A) := \inf \left\{ \varepsilon>0: A(B_1^X) \subset \bigcup_{j=1}^{2^{k-1}} (y_j + \varepsilon B_1^Y) \text{ for some } y_1,\dots,y_{2^{k-1}} \in Y \right\}.
\end{align*} 
\end{definition}

The following two properties are relevant for our analysis.

\begin{lemma}
Let $X$, $Y$, $Z$ be Banach spaces and let $A: X \to Y$, $B: Y \to Z$ be bounded linear operators with $A \not\equiv 0$. Then for all $j,k \geq 1$ it holds that
\begin{align*}
e_{j+k-1} (B \circ A) \leq e_j(B) e_k(A).
\end{align*}
\end{lemma}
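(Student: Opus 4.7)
The plan is to prove this multiplicativity property by a straightforward two-stage covering argument, which is the standard route for such entropy number inequalities. Given any $\varepsilon_1 > e_k(A)$ and $\varepsilon_2 > e_j(B)$, I will combine coverings of $A(B_1^X)$ in $Y$ and of $B(B_1^Y)$ in $Z$ to produce an economical covering of $(B\circ A)(B_1^X)$ in $Z$, and then pass to the infimum over $\varepsilon_1,\varepsilon_2$.

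More concretely, I would first invoke the definition of $e_k(A)$ to pick centers $y_1,\dots,y_{2^{k-1}} \in Y$ so that
\[
A(B_1^X) \subset \bigcup_{i=1}^{2^{k-1}} \bigl(y_i + \varepsilon_1 B_1^Y\bigr).
\]
Then, using the definition of $e_j(B)$, I would pick $z_1,\dots,z_{2^{j-1}} \in Z$ with $B(B_1^Y) \subset \bigcup_{l=1}^{2^{j-1}} (z_l + \varepsilon_2 B_1^Z)$; by linearity of $B$ this rescales to
\[
B(\varepsilon_1 B_1^Y) \subset \bigcup_{l=1}^{2^{j-1}} \bigl(\varepsilon_1 z_l + \varepsilon_1\varepsilon_2 B_1^Z\bigr).
\]
Applying $B$ to the first covering and substituting the second yields
\[
(B\circ A)(B_1^X) \subset \bigcup_{i,l} \bigl(B(y_i) + \varepsilon_1 z_l + \varepsilon_1 \varepsilon_2 B_1^Z\bigr),
\]
which is a covering by $2^{k-1}\cdot 2^{j-1} = 2^{(j+k-1)-1}$ balls of radius $\varepsilon_1\varepsilon_2$ in $Z$. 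By the very definition of the entropy number this forces $e_{j+k-1}(B\circ A) \leq \varepsilon_1\varepsilon_2$, and the conclusion follows by letting $\varepsilon_1 \downarrow e_k(A)$ and $\varepsilon_2 \downarrow e_j(B)$.

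Since everything reduces to combining two coverings via linearity of $B$, there is no real obstacle; the only point that requires (minor) care is the exponent bookkeeping ($2^{k-1}\cdot 2^{j-1}$ matching the $2^{(j+k-1)-1}$ required by the index $j+k-1$), which is exactly what motivates the shift by one in the statement. The hypothesis $A \not\equiv 0$ is only used implicitly to ensure the inequality is non-degenerate (both sides are finite and the infimum on the right is meaningful).
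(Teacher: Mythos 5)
Your proof is correct, and it is precisely the standard argument for the submultiplicativity of entropy numbers. The paper in fact states this lemma without proof (it is a well-known fact, see e.g.\ Edmunds--Triebel, \emph{Function Spaces, Entropy Numbers, Differential Operators}, which is among the paper's references for this material), so there is no proof in the text to compare against; but your two-stage covering argument, with the rescaling by linearity of $B$ and the exponent bookkeeping $2^{k-1}\cdot 2^{j-1}=2^{(j+k-1)-1}$, is exactly the argument one finds in the standard references and fills this gap correctly.

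One small remark on the hypothesis $A\not\equiv 0$: as you note it plays no essential role. With the definition used in the paper, if $A\equiv 0$ then $A(B_1^X)=\{0\}$, so $e_k(A)=0$ for all $k$, and likewise $B\circ A\equiv 0$ gives $e_{j+k-1}(B\circ A)=0$; the inequality holds trivially. The hypothesis is there only as a safeguard against degenerate conventions and is not needed in your argument.
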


\begin{lemma}[{\cite[Lemma 3.9]{KRS21}}]\label{Lemma3.9[KRS21]}
Let $A: X \to Y$ be a compact operator between separable Hilbert spaces. Let $\sigma_k(A)$ denote the singular values of $A$. Then, for $\mu>0$ it holds that
\begin{align*}
\sigma_k(A) \lesssim e^{-ck^\mu} \text{ for some } c>0 \quad \Longleftrightarrow \quad e_k(A) \lesssim e^{-\tilde{c}k^{\frac{\mu}{1+\mu}}} \text{ for some } \tilde{c}>0.
\end{align*}
\end{lemma}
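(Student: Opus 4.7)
The plan is to exploit the singular value decomposition $A = \sum_{k\geq 1} \sigma_k(A) \langle \cdot, e_k\rangle_X f_k$, with orthonormal systems $\{e_k\}\subset X$ and $\{f_k\}\subset Y$, which is available because $A$ is a compact operator between separable Hilbert spaces. With this decomposition in hand the two directions of the equivalence reduce, respectively, to a volumetric covering argument and an application of a Carl-type inequality relating approximation and entropy numbers. I expect the main obstacle to be the second direction, since the first is essentially a clean counting argument on finite-dimensional balls.

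For the implication $(\Rightarrow)$, assume that $\sigma_k(A) \lesssim e^{-ck^\mu}$. For each $n \in \N$, I would split $A = A_n + R_n$, where $A_n := \sum_{k=1}^n \sigma_k(A)\langle \cdot, e_k\rangle f_k$ is the best rank-$n$ approximation and $R_n = A - A_n$ satisfies $\|R_n\| = \sigma_{n+1}(A) \lesssim e^{-cn^\mu}$. Using the submultiplicativity/subadditivity properties of entropy numbers together with the trivial bound $e_k(R_n)\leq \|R_n\|$, one gets
\begin{align*}
e_{2k-1}(A) \leq e_k(A_n) + \|R_n\|.
\end{align*}
Since $A_n(B_1^X)$ lives in a ball of radius $\|A\|$ inside the $n$-dimensional subspace $\spa\{f_1,\dots,f_n\}$, the standard volumetric estimate for covering a finite-dimensional ball by smaller balls yields $e_k(A_n) \lesssim \|A\|\, 2^{-(k-1)/n}$. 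Balancing the two summands by optimally choosing $n \sim k^{1/(1+\mu)}$ and renaming constants then produces the claimed estimate $e_k(A) \lesssim \exp(-\tilde{c}\, k^{\mu/(1+\mu)})$.

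For the converse $(\Leftarrow)$, the key ingredient is the Carl inequality relating approximation numbers (which for Hilbert space operators coincide with the singular values) and entropy numbers, see e.g. \cite{EdmundsTriebel1996}. In its polynomial form,
\begin{align*}
\sup_{1\leq j\leq n} j^\alpha \sigma_j(A) \lesssim \sup_{1\leq j\leq n} j^\alpha e_j(A) \quad \text{for all } \alpha>0,
\end{align*}
it does not quite suffice for our exponential regime, so I would appeal to its exponential refinement, or, alternatively, iterate the polynomial version dyadically on successive blocks of indices, comparing $A$ with diagonal multiplier operators on $\ell^2$ whose entropy numbers can be computed explicitly. Inserting the assumed decay $e_k(A) \lesssim \exp(-\tilde{c}k^{\mu/(1+\mu)})$ into such a comparison and optimizing the truncation level then converts the entropy bound into $\sigma_k(A) \lesssim e^{-ck^\mu}$, matching the inverse relation $\mu \leftrightarrow \mu/(1+\mu)$ between the two exponents. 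The hard part is precisely this exponential Carl-type step, which is the reason the paper prefers to invoke the statement directly from \cite{KRS21} rather than reprove it.
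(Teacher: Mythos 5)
The paper states this lemma purely by citation to \cite[Lemma 3.9]{KRS21} and does not reproduce a proof, so there is no in-paper argument to compare against. Your forward implication $(\Rightarrow)$ is essentially complete and correct: the splitting $A = A_n + R_n$ with $\|R_n\| = \sigma_{n+1}(A)$, the subadditivity $e_{2k-1}(A) \leq e_k(A_n) + \|R_n\|$, the finite-rank volume estimate $e_k(A_n) \lesssim \|A\|\, 2^{-(k-1)/n}$, and the balancing choice $n \sim k^{1/(1+\mu)}$ all go through and produce the stated exponent $\mu/(1+\mu)$.

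For the converse $(\Leftarrow)$ your proposal has a genuine gap: you defer to an ``exponential refinement'' of Carl's inequality without producing it, and you acknowledge this is the hard step. The polynomial Carl inequality $\sup_{k\leq n} k^\alpha \sigma_k(A) \leq c_\alpha \sup_{k\leq n} k^\alpha e_k(A)$ does not transfer an exponential bound directly because the constants $c_\alpha$ grow with $\alpha$, and the dyadic-block iteration you hint at would need to be carried out with care to control the accumulating constants. In the Hilbert-space setting there is a cleaner route that avoids Carl-type machinery altogether: by the singular value decomposition, $A$ is unitarily equivalent to the diagonal operator on $\ell^2$ with entries $\sigma_k(A)$, and both $\sigma_k$ and $e_k$ are invariant under this reduction. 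For a diagonal operator, the orthogonal projection of $A(B_1^X)$ onto the first $n$ coordinates contains the ellipsoid with semi-axes $\sigma_1 \geq \dots \geq \sigma_n$, and comparing volumes with $2^{k-1}$ covering balls of radius $e_k(A)$ yields the lower bound $e_k(A) \gtrsim 2^{-(k-1)/n}\bigl(\prod_{j\leq n}\sigma_j\bigr)^{1/n} \geq 2^{-(k-1)/n}\sigma_n$ for every $n$ and $k$. Inserting the hypothesis $e_k(A) \lesssim e^{-\tilde{c}k^{\mu/(1+\mu)}}$ and choosing $k \sim n^{1+\mu}$ then gives $\sigma_n \lesssim e^{-cn^\mu}$. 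This volume \emph{lower} bound on entropy numbers is the ingredient missing from your sketch, and it is the natural companion to the volume \emph{upper} bound you already invoked in the forward direction.
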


\subsection{Sobolev spaces}\label{ssec:prel-sobo}
We introduce the relevant function spaces for our analysis. Let $s\in\R$. The inhomogeneous fractional Sobolev space $H^s(\R^n)$ is defined as
$$ H^s(\R^n):= \{ u\in \mathcal S'(\R^n) : \|u\|_{H^s(\R^n)} := \| (1+|\xi|^2)^{s/2}\hat u(\xi)\|_{L^2(\R^n)} <\infty \}, $$ while for the homogeneous fractional Sobolev space $\dot H^s(\R^n)$ we let $$  \dot H^s(\R^n):= \{ u\in \mathcal S'(\R^n) : \|u\|_{\dot H^s(\R^n)} := \| |\xi|^s\hat u(\xi)\|_{L^2(\R^n)} <\infty \}, $$ where $\mathcal S'(\R^n)$ indicates the set of tempered distributions on $\R^n$. Let now $\Omega\subset\R^n$ be open, bounded and Lipschitz. In this case we define the inhomogeneous fractional Sobolev space $H^s(\Omega)$ on the bounded domain $\Omega$ as $$ H^s(\Omega):=\{u|_\Omega: \ u\in H^s(\R^n)\}, $$ to which we associate the quotient norm $\|u\|_{H^s(\Omega)} := \inf\{ \|U\|_{H^s(\R^n)} : U|_\Omega=u \}. $ We also define the spaces
$$\widetilde H^s(\Omega):= \overline{C^\infty_c(\Omega)}^{H^s(\R^n)}, \qquad H^s_{\overline\Omega} := \{ u\in H^s(\R^n) : \supp( u)\subseteq \overline\Omega\},$$ and observe that $\widetilde H^s(\Omega) = H^s_{\overline\Omega}$ holds for Lipschitz $\Omega$ (see \cite[Theorem 3.29]{McLean}). The following properties show how the two kinds of fractional Sobolev spaces on bounded domains are related to each other by duality: $$ (\widetilde H^s(\Omega))^* = H^{-s}(\Omega), \qquad (H^s(\Omega))^* = \widetilde H^{-s}(\Omega).$$ It is possible to analogously define the homogeneous kind of fractional Sobolev spaces on bounded sets.

We also recall the Sobolev-Slobodeckij spaces for $\Omega\subset\R^n$ open, $s\in(0,1)$ and $1 \leq p < \infty$
\begin{align*}
W^{s,p}(\Omega) := \{u \in L^p(\Omega): \Vert u \Vert_{W^{s,p}(\Omega)} < \infty \},
\end{align*}
where
\begin{align*}
\Vert u \Vert_{W^{s,p}(\Omega)} := \Vert u \Vert_{L^p(\Omega)} + \left( \int_\Omega \int_\Omega \frac{\vert u(x) - u(y) \vert^p}{\vert x-y \vert^{ps+n}} dx dy \right)^{1/p}.
\end{align*}
By $W_0^{s,p}(\Omega)$ we will denote the closure of $C_c^\infty(\Omega)$ in $W^{s,p}(\R^n)$.

Furthermore, for $s\in(0,1)$ and $\widetilde{\Omega}\subseteq\R^{n+1}_+ := \{ (x,x_{n+1}) \in \R^{n+1}: x_{n+1}>0 \}$ open, we define the following weighted inhomogeneous and homogeneous Sobolev spaces 
\begin{align*}
H^1(\widetilde{\Omega}, x_{n+1}^{1-2s}) &:= \{ \tilde{u}:\widetilde{\Omega} \rightarrow \R  : \| x_{n+1}^{\frac{1-2s}{2}} \tilde{u} \|_{L^2(\widetilde{\Omega})} + \| x_{n+1}^{\frac{1-2s}{2}}\nabla \tilde{u} \|_{L^2(\widetilde{\Omega})} <\infty  \}, \\
\dot H^1(\widetilde{\Omega}, x_{n+1}^{1-2s}) &:= \{ \tilde{u}:\widetilde{\Omega} \rightarrow \R : \| x_{n+1}^{\frac{1-2s}{2}}\nabla \tilde{u} \|_{L^2(\widetilde{\Omega})} <\infty  \}, \\
\dot{H}^{1}_0(\widetilde{\Omega}, x_{n+1}^{1-2s})&:=\{ \tilde{v} \in \dot{H}^{1}(\widetilde{\Omega}, x_{n+1}^{1-2s}): \ \tilde{v}|_{\p\widetilde{\Omega}}=0 \},
\end{align*}
where the trace is well-defined, see for example \cite[Lemma 4.4]{RS20}.

Additionally, we fix the following notation. For $\Omega \subset \R^n$ open, Lipschitz and $R>0$ we denote $Q_{\Omega,R} := \Omega \times (0,R) \subset \R^{n+1}_+$ and $\Gamma_{\Omega,R} := \Omega \times [0,R) \subset \overline{\R^{n+1}_+}$. Moreover, we define (with a slight abuse of notation) the space $H_0^1(\Gamma_{\Omega,R}, x_{n+1}^{1-2s})$ and its homogeneous counterpart by
\begin{align*}
H_0^1(\Gamma_{\Omega,R},x_{n+1}^{1-2s}) := \{ \tilde{u} \in H^1(Q_{\Omega,R}, x_{n+1}^{1-2s}): \tilde{u} = 0 \text{ on } \left( \partial\Omega \times(0,R) \right) \cup \left( \Omega \times \{R\} \right) \}
\end{align*}
and
\begin{align*}
\dot{H}_0^1(\Gamma_{\Omega,R},x_{n+1}^{1-2s}) := \{ \tilde{u} \in \dot{H}^1(Q_{\Omega,R}, x_{n+1}^{1-2s}): \tilde{u} = 0 \text{ on } \left( \partial\Omega \times(0,R) \right) \cup \left( \Omega \times \{R\} \right) \}.
\end{align*}
In Section \ref{sec:singular} we deduce singular value estimates for the associated embeddings of $ H^1(Q_{\Omega,R}, x_{n+1}^{1-2s}) \rightarrow  L^2(Q_{\Omega,R}, x_{n+1}^{1-2s})$.

\subsection{Gevrey spaces}\label{ssec:prel-gev} 
In what follows below, we will make use of the class of Gevrey regular functions. In contrast to \cite{KRS21}, we will directly work in the Euclidean setting and rely on an $L^2$ based notion of Gevrey classes. Let $\Omega \subset \R^n$ be an open, smooth domain, $\Omega' \Subset \Omega$, $\sigma\geq 1$ and $\rho>0$. We define the Gevrey-$\sigma$-spaces, $G^\sigma(\overline{\Omega'})$, by
\begin{align*}
G^\sigma(\overline{\Omega'}) := \{ u \in C^\infty(\overline{\Omega'}): \text{ there exist } C,R>0 \text{ s.t. } \Vert D^\alpha u \Vert_{L^\infty(\overline{\Omega'})} \leq C R^{\vert\alpha\vert} \vert \alpha \vert^{\sigma\vert\alpha\vert} \text{ for all } \alpha\in\N_0^n \}.
\end{align*}
In particular, the class $G^{1}(\overline{\Omega'})$ coincides with the analytic functions on $\overline{\Omega'}$. In contrast to the analytic functions, if $\sigma>1$, there are elements in $G^\sigma(\overline{\Omega'})$ with compact support. 

Moreover, following \cite{KRS21}, we define the Hilbert spaces $A^{\sigma, \rho}(\overline{\Omega'})  \subset L^2(\overline{\Omega'})$ for $1 \leq \sigma < \infty$ and $\rho > 0$ in the following way:
\begin{align*}
A^{\sigma, \rho}(\overline{\Omega'}) := \left\{ u \in C^{\infty}(\overline{\Omega'}): \ \|u\|_{A^{\sigma, \rho}(\overline{\Omega'})} < \infty \right\}, \ 
\|u\|_{A^{\sigma, \rho}(\overline{\Omega'})} := \left( \sum\limits_{\ell = 0}^{\infty} e^{-2 \ell \rho} e^{-2 \sigma \ell \log(\ell+1)} \|u\|_{H^{\ell}(\overline{\Omega'})}^2 \right)^{\frac{1}{2}}.
\end{align*}

\begin{proposition}\label{PropGevreySpaceIdentitiy}
Let $\sigma \geq 1$. The spaces $G^\sigma(\overline{\Omega'})$ and $A^{\sigma,\rho}(\overline{\Omega'})$ are related by 
\begin{align*}
G^\sigma(\overline{\Omega'}) = \bigcup_{\rho>0} A^{\sigma,\rho}(\overline{\Omega'}).
\end{align*}
\end{proposition}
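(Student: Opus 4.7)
The claim states that the $L^\infty$-based Gevrey class $G^\sigma(\overline{\Omega'})$ coincides with the union of the $L^2$-based Hilbert spaces $A^{\sigma,\rho}(\overline{\Omega'})$ over $\rho>0$. My plan is to prove the two inclusions separately, both by elementary factorial/exponential bookkeeping.

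For the inclusion $G^\sigma(\overline{\Omega'}) \subset \bigcup_{\rho>0} A^{\sigma,\rho}(\overline{\Omega'})$, I would take $u \in G^\sigma(\overline{\Omega'})$ with constants $C,R$ and bound each $\|D^\alpha u\|_{L^2(\overline{\Omega'})}$ by $|\overline{\Omega'}|^{1/2}\|D^\alpha u\|_{L^\infty(\overline{\Omega'})}$. Summing over multi-indices with $|\alpha|\leq\ell$ and using that the number of multi-indices of fixed length $k$ grows only polynomially in $k$, one obtains an estimate of the form $\|u\|_{H^\ell(\overline{\Omega'})}^2 \lesssim \ell^{n}R^{2\ell}\ell^{2\sigma\ell}$. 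The decisive cancellation then comes from $e^{-2\sigma\ell\log(\ell+1)}\ell^{2\sigma\ell} = \bigl(\ell/(\ell+1)\bigr)^{2\sigma\ell}$, which is uniformly bounded in $\ell$. The resulting series $\sum_\ell \ell^{n}\bigl(R^2 e^{-2\rho}\bigr)^\ell$ converges as soon as $\rho$ is chosen larger than $\log R$, yielding $u \in A^{\sigma,\rho}(\overline{\Omega'})$ for this choice of $\rho$.

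For the converse inclusion $\bigcup_{\rho>0} A^{\sigma,\rho}(\overline{\Omega'}) \subset G^\sigma(\overline{\Omega'})$, I would take $u \in A^{\sigma,\rho}(\overline{\Omega'})$ and note that the definition of the $A^{\sigma,\rho}$-norm immediately gives, for every $\ell\geq 0$, the bound $\|u\|_{H^\ell(\overline{\Omega'})} \leq \|u\|_{A^{\sigma,\rho}(\overline{\Omega'})}\, e^{\ell\rho}(\ell+1)^{\sigma\ell}$. To convert this into an $L^\infty$-bound on $D^\alpha u$ with $|\alpha|=\ell$, I would invoke the Sobolev embedding $H^{\ell+k}(\overline{\Omega'}) \hookrightarrow W^{\ell,\infty}(\overline{\Omega'})$ for any fixed integer $k>n/2$, which is available thanks to the smoothness of $\Omega$ together with $\Omega'\Subset\Omega$ (using extension to a slightly larger smooth domain if needed). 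The polynomial prefactor $(\ell+k+1)^{\sigma k}$ and the exponential shift $e^{k\rho}$ are absorbed by a slight enlargement of the base $R$, while the elementary inequality $(\ell+k+1)^{\sigma\ell}/\ell^{\sigma\ell} \leq e^{\sigma(k+1)}$ controls the remaining factor uniformly in $\ell$. Combining these steps yields a bound $\|D^\alpha u\|_{L^\infty(\overline{\Omega'})} \leq C'(R')^{|\alpha|}|\alpha|^{\sigma|\alpha|}$, so that $u \in G^\sigma(\overline{\Omega'})$.

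The argument is essentially computational, and I do not anticipate a genuine technical obstacle. The main inputs are Stirling-type manipulations and a standard Sobolev embedding. The only point requiring mild care is ensuring that $\overline{\Omega'}$ is regular enough for the Sobolev embedding into $W^{\ell,\infty}$ to hold with a constant independent of $\ell$; this is secured by the assumptions on $\Omega$ and $\Omega'$ made at the outset of Section \ref{ssec:prel-gev}.
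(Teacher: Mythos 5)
Your proposal is correct and follows essentially the same route as the paper's proof: one direction bounds $\|u\|_{H^\ell}$ by $L^\infty$-derivative bounds with a polynomial count of multi-indices and exploits the cancellation $e^{-2\sigma\ell\log(\ell+1)}\ell^{2\sigma\ell}\lesssim 1$, while the other invokes the Sobolev embedding $H^{\ell+k}\hookrightarrow W^{\ell,\infty}$ (with fixed $k>n/2$) and absorbs the shift $(\ell+k)^{\sigma(\ell+k)}\lesssim R_1^\ell\,\ell^{\sigma\ell}$. The only cosmetic difference is that you track the multi-index count as $\ell^n$ where the paper loosely writes $\ell$; this does not affect the argument.
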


\begin{proof}
Let $u \in G^\sigma(\overline{\Omega'})$, then
\begin{align*}
\Vert u \Vert_{A^{\sigma,\rho}(\overline{\Omega'})}^2 &= \sum_{\ell=0}^\infty e^{-2\ell\rho} e^{-2\sigma \ell \log(\ell+1)} \Vert u \Vert_{H^\ell (\overline{\Omega'})}^2 \leq \sum_{\ell=0}^\infty e^{-2\ell\rho} e^{-2\sigma \ell \log(\ell+1)} \ell C^2 R^{2\ell} \ell^{2\sigma\ell}\\
&\leq C^2 \sum_{\ell=0}^\infty \ell \exp \big( -2\ell\rho - 2\sigma \ell \log(\ell+1) + 2\ell\log(R) + 2\sigma\ell\log(\ell+1) \big) < \infty,
\end{align*}
if $\rho > \log(R)$, and thus $u \in A^{\sigma,\rho}(\overline{\Omega'})$.\\
On the other hand, let $u \in A^{\sigma,\rho}(\overline{\Omega'})$, i.e., $\Vert u \Vert_{A^{\sigma,\rho}(\overline{\Omega'})}^2 \leq M < \infty$, which gives by the definition of the $A^{\sigma,\rho}$-norm
\begin{align*}
e^{-2\ell\rho} e^{-2\sigma\ell \log(\ell+1)} \Vert u \Vert_{H^\ell(\overline{\Omega'})}^2 \leq M \qquad \text{for all } \ell\in\N.
\end{align*}
Let now $\alpha \in \N_0^n$ with $\vert\alpha\vert = k$. By Sobolev embedding,
\begin{align*}
\Vert D^\alpha u \Vert_{L^\infty(\overline{\Omega'})}^2 &\leq C \Vert u \Vert_{H^{k + \lceil \frac{n}{2} \rceil + 1}(\overline{\Omega'})}^2 \leq CM e^{2\rho(k + \lceil \frac{n}{2} \rceil + 1)} e^{2\sigma (k + \lceil \frac{n}{2} \rceil + 1) \log( k + \lceil \frac{n}{2} \rceil + 2 )}\\
&= CM e^{2\rho (\lceil \frac{n}{2} \rceil + 1)} e^{2\rho k} (k + \left\lceil \frac{n}{2} \right\rceil + 2)^{2\sigma (k + \lceil \frac{n}{2} \rceil + 1)} \leq \tilde{C}^2 \tilde{R}^{2k} k^{2\sigma k},
\end{align*}
where the last inequality holds since for fixed $m\in\N$ we have $(k+m)^{\sigma(k+m)} \leq C_1 R_1^k k^{\sigma k}$ for all $k\in\N$. This proves that $u \in G^\sigma(\overline{\Omega'})$.
\end{proof}

\begin{remark}\label{RemarkInclusionGevreySpace}
From the proof of Proposition \ref{PropGevreySpaceIdentitiy} we see that for $u \in G^\sigma(\overline{\Omega'})$ with
\begin{align*}
\Vert D^\alpha u \Vert_{L^\infty(\overline{\Omega'})} \leq CR^{\vert\alpha\vert} \vert\alpha\vert^{\sigma\vert\alpha\vert} \qquad \text{for all } \alpha\in\N_0^n
\end{align*}
the parameter $\rho$ such that $u\in A^{\sigma,\rho}(\overline{\Omega'})$ depends on $R$ but not on $C$.
\end{remark}

For $\sigma\geq 1, \ \rho, \ r>0$, we have the following size estimate of the inclusion $A^{\sigma, \rho}(\overline{\Omega'}) \subset H^{r}(\overline{\Omega'})$. A related estimate for closed smooth manifolds was deduced in \cite[Theorem 3.13]{KRS21}.

\begin{lemma}
\label{lem:entropy_numbers_embedding}
Let $\sigma\geq 1$, $\rho>0$ and $r>0$. Let $\Omega \subset \R^n$ be an open, Lipschitz domain and let $\Omega' \Subset \Omega$. Let $j: A^{\sigma, \rho}(\overline{\Omega'}) \rightarrow H^r(\overline{\Omega'})$ be the canonical inclusion and let $e_k(j)$ denote its entropy numbers. Then, there exist constants $c>0$, $\tilde{\rho}>0$ such that
\begin{align*}
e_k(j) \leq c e^{- \tilde{\rho} k^{\frac{1}{n\sigma +1 }}}.
\end{align*}
\end{lemma}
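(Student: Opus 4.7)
The plan is to factor the embedding $j$ through a classical Sobolev space $H^{\ell}(\overline{\Omega'})$ for a parameter $\ell$ that will be optimized at the end, establish a singular value decay for $j$, and then convert it into the claimed entropy number decay using Lemma \ref{Lemma3.9[KRS21]}.

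For each integer $\ell > r$, I would write $j = j_2 \circ j_1$ with
\begin{equation*}
j_1 : A^{\sigma,\rho}(\overline{\Omega'}) \to H^{\ell}(\overline{\Omega'}), \qquad j_2 : H^{\ell}(\overline{\Omega'}) \to H^{r}(\overline{\Omega'}) ,
\end{equation*}
both being canonical inclusions. The operator norm bound $\|j_1\| \leq e^{\ell\rho + \sigma\ell\log(\ell+1)}$ is read off immediately from the series definition of $\|\cdot\|_{A^{\sigma,\rho}}$ by keeping only the $\ell$-th term. For the compact embedding $j_2$ on the bounded Lipschitz domain $\Omega'$, standard Weyl-type asymptotics for Sobolev embeddings (see e.g. Edmunds--Triebel \cite{EdmundsTriebel1996}) yield $\sigma_k(j_2) \leq C(\ell) k^{-(\ell-r)/n}$, where $C(\ell)$ is at most sub-exponential in $\ell$; to control this $\ell$-dependence cleanly one may apply a uniform extension operator (e.g.\ Stein's total extension) to reduce to an estimate on $\R^n$, where Fourier truncation gives constants that grow only polynomially in $\ell$.

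By multiplicativity of singular values under composition with a bounded operator, $\sigma_k(j) \leq \|j_1\| \, \sigma_k(j_2) \leq C(\ell) \, e^{\ell\rho + \sigma\ell\log(\ell+1)} k^{-(\ell-r)/n}$. Taking logarithms gives
\begin{equation*}
\log \sigma_k(j) \leq (\rho + O(1))\ell + \sigma\ell\log(\ell+1) - \tfrac{\ell-r}{n}\log k ,
\end{equation*}
and the two dominant terms $\sigma\ell\log\ell$ and $-(\ell/n)\log k$ balance when $\ell$ is chosen of order $k^{1/(n\sigma)}$. Substituting this choice yields $\sigma_k(j) \leq c\, e^{-c'\, k^{1/(n\sigma)}}$ for some $c, c' > 0$. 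Since $A^{\sigma,\rho}(\overline{\Omega'})$ and $H^r(\overline{\Omega'})$ are separable Hilbert spaces and $j$ is compact, Lemma \ref{Lemma3.9[KRS21]} applies with $\mu = 1/(n\sigma)$; using the identity $\mu/(1+\mu) = 1/(n\sigma+1)$ this converts the singular value decay into the claimed entropy bound $e_k(j) \leq c\, e^{-\tilde\rho\, k^{1/(n\sigma+1)}}$.

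The main technical hurdle is the careful tracking of the constant $C(\ell)$ in the singular value estimate for $j_2$: any faster-than-sub-exponential growth in $\ell$ would destroy the balance in the optimization and degrade the final exponent. Provided one works on $\overline{\Omega'}$ via a uniform extension to $\R^n$ (or a large cube), this $\ell$-dependence is harmless, and the rest of the argument is a straightforward calculus optimization plus the singular value to entropy number conversion already available in the paper.
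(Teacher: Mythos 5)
Your proof is correct, but it takes a genuinely different route from the paper's. Both arguments factor through $H^\ell(\overline{\Omega'})$ for a variable $\ell$, but the paper works entirely in the scale of entropy numbers: from the ball containment $B_1^{A^{\sigma,\rho}}\subset e^{\ell\rho+\sigma\ell\log(\ell+1)}B_1^{H^\ell}$ and the asymptotics $e_k(i_{H^\ell\to H^r})\lesssim k^{-(\ell-r)/n}$, the paper plugs in $\ell = k^{1/(n\sigma+1)}$ directly. You instead optimize for singular values, obtaining $\sigma_k(j)\lesssim e^{-c'k^{1/(n\sigma)}}$ with $\ell\sim k^{1/(n\sigma)}$, and only then pass to entropy numbers via Lemma \ref{Lemma3.9[KRS21]} (with $\mu=1/(n\sigma)$, so $\mu/(1+\mu)=1/(n\sigma+1)$). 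Your detour makes the degradation from $1/(n\sigma)$ to $1/(n\sigma+1)$ transparent as exactly the $\mu\mapsto\mu/(1+\mu)$ conversion of that lemma, whereas the paper's direct choice $\ell=k^{1/(n\sigma+1)}$ in the entropy scale appears suboptimal until one accounts for the $\ell$-dependence of the Sobolev entropy constant. One imprecision worth noting in your version: the constant $C(\ell)$ in $\sigma_k(j_2)\leq C(\ell)k^{-(\ell-r)/n}$ is not polynomial (nor sub-exponential) in $\ell$ but genuinely exponential, of order $C_0^\ell$ for some domain-dependent $C_0$; this is already visible on a torus, where $\sigma_k\sim c_n^{(\ell-r)/n}k^{-(\ell-r)/n}$. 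This exponential growth is harmless — it merely shifts $\rho$ to $\rho+\log C_0$ in the calculus optimization and does not alter the exponent $1/(n\sigma)$ — so your conclusion stands, and indeed your singular value route is the cleaner one regarding constants (the corresponding entropy constant for $i_{H^\ell\to H^r}$ grows super-exponentially, like $(\ell/n)^{\ell/n}$, which is precisely why the optimal $\ell$ in the entropy scale is $k^{1/(n\sigma+1)}$ rather than $k^{1/(n\sigma)}$).
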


\begin{proof}
By definition, we have 
\begin{align*}
e_k(j) := \inf\left\{ \varepsilon>0: \ j(B^{{A}^{\sigma, \rho}}_1) \subset \bigcup\limits_{l=0}^{2^{k-1}} (y_l + B^{H^r}_{\varepsilon}) \right\},
\end{align*}
for $y_l \in H^r(\overline{\Omega'})$. Moreover, we recall that $e_k(i_{H^r \rightarrow H^{s}}) \sim k^{- \frac{r-s}{n}}$ and that, by definition, for any linear map $T$, it holds that $e_k(c T) = c e_k(T)$. As a consequence, since $B^{A^{\sigma, \rho}}_1 \subset e^{\ell \rho} e^{\sigma \ell \log(\ell+1)} B_1^{H^{\ell}} $ for all $\ell\in\N$, we have that
\begin{align*}
e_k(j) \leq \inf\limits_{\ell \in \N} e^{\ell \rho} e^{\sigma \ell \log(\ell+1)} e_k(i_{H^{\ell} \rightarrow H^{r}}) \leq c \inf\limits_{\ell \in \N} e^{\ell \rho} e^{\sigma \ell \log(\ell+1)} k^{\frac{-(\ell-r)}{n}}.
\end{align*}
It thus remains to optimize the expression from above. By choosing $\ell = k^\frac{1}{n\sigma +1}$ we obtain an upper bound which yields the claimed scaling for the entropy numbers.
\end{proof}

\subsection{The fractional Laplacian}\label{ssec:prel-frlap}
Let $s\in\mathbb R^+\setminus\mathbb Z$ and $u\in\mathcal S(\R^n)$, the set of Schwartz functions on $\R^n$. The fractional Laplacian of $u$ is defined as
$$(-\Delta)^s u := \mathcal{F}^{-1}(|\xi|^{2s}\hat u(\xi)).$$
Observe that $(-\Delta)^s$ is a continuous map from $\mathcal S(\R^n)$ to $L^\infty(\R^n)$, and thus the above definition can be extended by density to a continuous operator $(-\Delta)^s : H^r(\mathbb R^n)\rightarrow H^{r-2s}(\mathbb R^n)$ for all $r\in\mathbb R$. It is also possible to extend the definition of the fractional Laplacian to Sobolev spaces of negative exponent and to Sobolev spaces of integrability $p\neq 2$. Many equivalent definitions of the operator $(-\Delta)^s$ are given in \cite{Kw17} in the regime $s\in(0,1)$, such as the definition as the following singular integral
$$ (-\Delta)^su(x) := c_{n,s}\,PV\int_{\mathbb R^n}\frac{u(x)-u(y)}{|x-y|^{n+2s}}dy ,$$
with $c_{n,s}:= \frac{4^s \Gamma(n/2+s)}{\pi^{n/2}|\Gamma(-s)|}$ and where $PV$ denotes the principal value.

The fractional Laplacian can also be understood in terms of the Caffarelli-Silvestre extension \cite{CS07}. Since we will often use this perspective in what follows, we recall some details of this. Let $s\in (0,1)$, and consider the following extension problem relative to the function $u\in H^s(\R^n)$: 
\begin{align}\label{eq:CS-simple}
\begin{cases}
\begin{alignedat}{2}
\nabla \cdot x_{n+1}^{1-2s} \nabla \tilde{u} & = 0 \quad &&\mbox{in } \R^{n+1}_+,\\
\tilde{u} & = u \quad &&\mbox{in } \R^n \times \{0\}. 
\end{alignedat}
\end{cases}
\end{align}
This is naturally associated to the bilinear form 
\begin{align*}
B(v,w):= \int\limits_{\R^{n+1}_+} x_{n+1}^{1-2s} \nabla v \cdot \nabla w d(x,x_{n+1}),
\end{align*}
which is well-defined for $v,w \in \dot{H}^{1}(\R^{n+1}_+, x_{n+1}^{1-2s})$. It is known (\cite{CS07}) that problem \eqref{eq:CS-simple} admits a unique (weak) solution $\tilde u \in \dot{H}^{1}(\R^{n+1}_+, x_{n+1}^{1-2s})$, i.e., $\tilde u$ verifies $B(\tilde u,v)=0$ for all $v\in \dot{H}^{1}_0(\R^{n+1}_+, x_{n+1}^{1-2s})$, and $\tilde u|_{\R^n\times\{0\}}=u$. This function $\tilde u$ is called the \emph{Caffarelli-Silvestre extension of $u$}. As proved in \cite{CS07, ST10}, the fractional Laplacian $(-\Delta)^su$ of $u\in H^s(\R^n)$ can be expressed in terms of the (local) Dirichlet-to-Neumann map associated to problem \eqref{eq:CS-simple}: there exists a constant $c_s>0$ such that
\begin{align}\label{EqFractionalOperatorAsLimitFromCS}
(-\Delta)^s u(x) = -c_s \lim\limits_{x_{n+1} \rightarrow 0} x_{n+1}^{1-2s} \p_{n+1} \tilde{u}(x,x_{n+1}) \in {\dot{H}^{-s}(\R^n)}, 
\end{align} 
with both objects understood in their weak forms. In particular, the nonlocal problem
\begin{align*}
\begin{cases}
\begin{alignedat}{2}
(-\Delta)^su & = F \quad &&\mbox{in } \Omega,\\
u & = f \quad &&\mbox{in } \Omega_e,
\end{alignedat}
\end{cases}
\end{align*}
for $f\in H^s(\R^n), F\in H^{-s}(\Omega)$ can be reformulated locally in terms of the Caffarelli-Silvestre extension as
\begin{align}\label{eq:CS-frlap}
\begin{cases}
\begin{alignedat}{2}
\nabla \cdot x_{n+1}^{1-2s} \nabla \tilde{u} & = 0 \quad &&\mbox{in } \R^{n+1}_+,\\
-c_s \lim\limits_{x_{n+1} \rightarrow 0} x_{n+1}^{1-2s} \p_{n+1} \tilde{u} & = F \quad &&\mbox{in } \Omega, \\
\tilde{u} & = f \quad &&\mbox{in } \Omega_e \times \{0\}. 
\end{alignedat}
\end{cases}
\end{align}
We say that $\tilde u\in \dot H^1(\R^{n+1}_+,x_{n+1}^{1-2s})$ is a solution of problem \eqref{eq:CS-frlap} if $\tilde{u}$ has trace $f$ on $\Omega_e \times \{0\}$ and for all $\varphi \in \dot H^1_{0,\Omega_e}(\R^{n+1}_+,x_{n+1}^{1-2s})$ it holds
\begin{align*}
-\int_{\R^{n+1}_+} x_{n+1}^{1-2s} \nabla\varphi\cdot\nabla \tilde{u} \ d(x,x_{n+1}) = c_s^{-1} \int_{\R^n} \varphi(x,0) F(x,0) \ dx.
\end{align*}
Here,
\begin{align*}
\dot{H}^1_{0,\Omega_e}(\R^{n+1}_+,x_{n+1}^{1-2s}):= \left\{  \tilde{u} \in \dot{H}^1(\R^{n+1}_+, x_{n+1}^{1-2s}): \ \tilde{u}(x,0)= 0 \mbox{ for a.e. } x \in \Omega_{e} \right\}.
\end{align*}
Existence and uniqueness of $\tilde{u}$ follow by an application of Lax-Milgram's theorem. Additionally, it holds that
$$ \|\tilde u\|_{\dot H^1(\R^{n+1}_+, x_{n+1}^{1-2s})} \lesssim \|f\|_{H^s(\R^n)} + \|F\|_{H^{-s}(\Omega)}, $$
where we have made use of the compact support of $u-f$ on $\R^n$ and Poincar\'e's inequality.
By virtue of the trace estimates we obtain
$$ \|u\|_{\dot{H}^s(\R^n)}\lesssim \|\tilde u\|_{\dot H^1(\R^{n+1}_+, x_{n+1}^{1-2s})} \qquad \mbox{and} \qquad  \|\lim\limits_{x_{n+1} \rightarrow 0} x_{n+1}^{1-2s} \p_{n+1} \tilde{u}\|_{\dot{H}^{-s}(\R^n)}\lesssim \|\tilde u\|_{\dot H^1(\R^{n+1}_+, x_{n+1}^{1-2s})}.$$
Indeed, both estimates follow from a tangential Fourier transform of the Caffarelli-Silvestre extension, as shown in \cite{CS07}.

Moreover, \cite{ST10} provides a solution representation for $\tilde{u}$ by a Neumann kernel (Theorem 1.1 in \cite{ST10})
\begin{align*}
\tilde{u}(x,x_{n+1}) = \frac{1}{\Gamma(s)} \int_0^\infty \int_{\R^n} K_t(x,z) ((-\Delta)^s u)(z) dz \ e^{-\frac{x_{n+1}^2}{4t}} \frac{dt}{t^{1-s}},
\end{align*}
where $K_t(x,z)$ denotes the heat kernel. This can also be observed by a Fourier transform in the tangential directions or, alternatively, the explicit form of the heat kernel. If we consider $x_{n+1}=0$ we deduce
\begin{align*}
u(x) = \frac{1}{\Gamma(s)} \int_0^\infty \int_{\R^n} K_t(x,z) ((-\Delta)^s u)(z) dz \frac{dt}{t^{1-s}}.
\end{align*}

\subsection{The variable coefficient fractional Laplacian}\label{ssec:prel-VariableCoeffFrLapl}

In this section we define a variable coefficient, anisotropic fractional Laplacian. Following the structure of the previous section, we rely on a variable coefficient Caffarelli-Silvestre extension \cite{CS07, ST10}. 

We let $\tilde{a}= \begin{pmatrix} a & 0 \\ 0 & 1 \end{pmatrix}$, where $a \in L^{\infty}(\R^n;\R^{n \times n})$ is a uniformly elliptic, symmetric tensor field. By energy (or Lax-Milgram) arguments it follows that
\begin{equation*}
\begin{cases}
\begin{alignedat}{2}
\nabla \cdot x_{n+1}^{1-2s} \tilde{a} \nabla \tilde{u} & = 0 \quad &&\mbox{in } \R^{n+1}_+,\\
\tilde{u} & = u \quad &&\mbox{in } \R^n \times \{0\},
\end{alignedat}
\end{cases}
\end{equation*}
has a unique solution $\tilde u\in \dot H^1(\R^{n+1}_+, x_{n+1}^{1-2s})$ for all $u\in H^s(\R^n)$, and the trace estimates $$ \|u\|_{H^s(\R^n)}\lesssim \|\tilde u\|_{\dot H^1(\R^{n+1}_+, x_{n+1}^{1-2s})}, \qquad \|\lim\limits_{x_{n+1} \rightarrow 0} x_{n+1}^{1-2s} \p_{n+1} \tilde{u}\|_{H^{-s}(\R^n)}\lesssim \|\tilde u\|_{\dot H^1(\R^{n+1}_+, x_{n+1}^{1-2s})}$$
hold. The first estimate follows as in the previous section, the second one can be obtained by a weak formulation of the normal derivative.
With this in mind and in analogy to the case of the fractional Laplacian, we define the operator $(-\nabla \cdot a \nabla)^s: H^s(\R^n)\rightarrow H^{-s}(\R^n)$ (in weak sense) as
\begin{align*}
(-\nabla \cdot a \nabla)^s u := -c_s \lim\limits_{x_{n+1}\rightarrow 0} x_{n+1}^{1-2s}\p_{n+1} \tilde{u}.
\end{align*}
We will thus view the nonlocal problem
\begin{equation*}
\begin{cases}
\begin{alignedat}{2}
(-\nabla \cdot a \nabla)^s u & = F \quad &&\mbox{in } \Omega,\\
u & = f \quad &&\mbox{in } \Omega_e
\end{alignedat}
\end{cases}
\end{equation*}
with $f\in H^s(\R^n)$ and $F\in H^{-s}(\Omega)$ locally as
\begin{equation}\label{eq:CS-varfrlap}
\begin{cases}
\begin{alignedat}{2}
\nabla \cdot x_{n+1}^{1-2s} \tilde{a} \nabla \tilde{u} & = 0 \quad &&\mbox{in } \R^{n+1}_+,\\
-c_s \lim\limits_{x_{n+1} \rightarrow 0} x_{n+1}^{1-2s} \p_{n+1}\tilde{u} & = F \quad &&\mbox{in } \Omega,\\
\tilde{u} & = f \quad &&\mbox{in } \Omega_e \times \{0\}.
\end{alignedat}
\end{cases}
\end{equation}
Similarly as above $\tilde{u} \in \dot H^1(\R^{n+1}_+, x_{n+1}^{1-2s})$ is the unique solution of \eqref{eq:CS-varfrlap} if it has trace $f$ on $\Omega_e \times \{0\}$ and for all $\varphi \in \dot H^1_{0,\Omega_e}(\R^{n+1}_+, x_{n+1}^{1-2s})$ it holds
\begin{align*}
-\int_{\R^{n+1}_+} x_{n+1}^{1-2s} \nabla\varphi \cdot \tilde{a}\nabla \tilde{u} \ d(x,x_{n+1}) = c_s^{-1} \int_{\R^n} \varphi(x,0) F(x,0) \ dx.
\end{align*}
We again have by the well-posedness of the problem that
\begin{align*}
\|u\|_{ H^s(\R^{n})} + \|\tilde u\|_{\dot H^1(\R^{n+1}_+, x_{n+1}^{1-2s})}  \lesssim \|f\|_{H^s(\R^n)} + \|F\|_{H^{-s}(\Omega)}.
\end{align*}
We remark that also the variable coefficient problem allows for various equivalent definitions, including spectral ones \cite{ST10} as well as kernel based ones \cite{GLX17}.

\subsection{The fractional conductivity equation}\label{ssec:prel-FracCondEq}

In this subsection we turn to a preliminary study of the fractional divergence and gradient operators $(\nabla\cdot)^s, \nabla^s$, as well as to a brief discussion of the fractional conductivity problem 
\begin{align*}
\begin{cases}
\begin{alignedat}{2}
\left( (\nabla \cdot)^s (\Theta \cdot\nabla^s) + q \right) u &= 0 \qquad &&\text{in } \Omega,\\
u &= f \qquad &&\text{in }  \Omega_e,
\end{alignedat}
\end{cases}
\end{align*}
for $\Omega \subset \R^n$ open and Lipschitz.

It is worth noticing immediately that the operators $(\nabla \cdot)^s (\Theta \cdot\nabla^s)$ and $(-\nabla\cdot a\nabla)^s$ represent two different fractional generalizations of the classical conductivity operator whose associated inverse problems are not equivalent.

The fractional gradient and divergence operators were introduced as part of the nonlocal vector calculus studied in \cite{DGLZ12, DGLZ13}, and were later further developed in the papers \cite{C20, C20a, CMR21, CRZ22, CdHS22, C22, RZ24}, among others. We refer to these works as main references for this section. Assume $s\in (0,1)$. We define the fractional gradient of a function $u\in C^\infty_c(\mathbb R^n)$ as the following two-point function:
$$ \nabla^s u (x,y) := \frac{c_{n,s}^{1/2}}{\sqrt{2}} \frac{u(y)-u(x)}{|x-y|^{n/2+s+1}}(x-y).$$
Having observed that this implies $\|\nabla^s u\|_{L^2(\mathbb R^{2n})} \leq \|u\|_{H^s(\R^n)}$, we extend the above definition by density to act as $\nabla^s : H^s(\R^n)\rightarrow L^2(\R^{2n})$. Moreover, we define the fractional divergence $(\nabla\cdot)^s$ as the adjoint of $\nabla^s$, hence deducing the mapping property $(\nabla\cdot)^s: L^2(\mathbb R^{2n})\rightarrow H^{-s}(\mathbb R^n)$. With these definitions, it is immediate to observe that in $H^s(\mathbb R^n)$ it holds that $$(\nabla\cdot)^s\nabla^s = (-\Delta)^s.$$ 

Let $\gamma\in C^\infty(\R^n)$ be a scalar function satisfying
$\gamma^{1/2}-1 \in C^\infty_c(\Omega)$ and $\gamma\geq \underline\gamma$ for some $\underline\gamma \in (0,\infty)$. These properties imply that $\gamma$ is bounded, and also $\gamma\equiv 1$ in $\Omega_e$. We refer to the function $\gamma$ as the \emph{conductivity}, while we will call $\Theta(x,y):=\gamma(x)^{1/2}\gamma(y)^{1/2}I$ the \emph{interaction matrix}, according to the terminology of nonlocal vector calculus. One immediately sees that $\Theta\cdot\nabla^s u \in L^2(\R^{2n})$ for all $u\in H^s(\R^n)$, and thus we can define the \emph{fractional conductivity operator} $$(\nabla\cdot)^s(\Theta\cdot\nabla^s) : H^s(\R^n)\rightarrow H^{-s}(\R^n).$$
To this operator we associate the direct problem
\begin{align}\label{direct-problem-cond}
\begin{cases}
\begin{alignedat}{2}
\left( (\nabla \cdot)^s (\Theta \cdot\nabla^s) + q \right) u &= F \qquad &&\text{in } \Omega,\\
u &= f \qquad &&\text{in }  \Omega_e,
\end{alignedat}
\end{cases}
\end{align}
where $q\in L^{\frac{n}{2s}}(\Omega)$, $F\in H^{-s}(\Omega)$, and $f\in H^s(\R^n)$. The well-posedness of \eqref{direct-problem-cond} in $H^s(\R^n)$ has been obtained in the case $q\equiv 0$ by energy methods in \cite{C20a}, see also \cite{DGLZ13}.  In this paper, we will always assume that $q$ is such that $0$ is not a Dirichlet eigenvalue for \eqref{direct-problem-cond}, which is always the case if for example $q\geq 0$ or $\|q\|_{L^{\frac{n}{2s}}(\Omega)}$ is small enough (see condition (Aq) and Section \ref{subsec:condition-Aq}). This allows to extend the well-posedness result also for non-vanishing $q$ correspondingly.\\

Consider now the following bilinear form associated with \eqref{direct-problem-cond}:
$$ B_{\gamma, q}(u,v) := \int_{\R^n} \int_{\R^n} \left( \Theta\cdot\nabla^s u \right) \cdot \nabla^s v \, dx \, dy + \int_\Omega qu v \, dx.$$
We see that $B_{\gamma, q}$ is defined on $H^s(\R^n)\times H^s(\R^n)$ and symmetric. Further, let $u_f\in H^s(\R^n)$ indicate the unique solution of \eqref{direct-problem-cond} in the case $F\equiv 0$. It is easy to prove (see e.g. \cite{C20a}) that the linear map $\Lambda_{\gamma, q}: H^s(\Omega_e) \rightarrow \left(H^s(\Omega_e)\right)^*$ given by
$$\langle \Lambda_{\gamma, q}(f|_{\Omega_e}),g|_{\Omega_e} \rangle := B_{\gamma,q}(u_f,g), \qquad f,g\in H^s(\R^n),$$
where $\langle\cdot,\cdot\rangle$ denotes the $H^s(\Omega_e)$-$\left(H^s(\Omega_e)\right)^*$ duality pairing, is well-defined, continuous and self-adjoint. We call $\Lambda_{\gamma, q}$ the \emph{Dirichlet-to-Neumann map} associated with \eqref{direct-problem-cond}. In this framework, before turning to its instability properties, we first address the uniqueness question in the associated fractional Calder\'on problem.

\begin{proposition}\label{uniqueness-frac-cond}
    Let $\Omega, W_1, W_2\subset \R^n$ be bounded, open and mutually disjoint sets. Let $\gamma\in C^\infty(\R^n)$ be a fixed conductivity function satisfying
$\gamma^{1/2}-1 \in C^\infty_c(\Omega)$ and $\gamma\geq \underline\gamma$ for some $\underline\gamma \in (0,\infty)$. Assume that $q_1, q_2 \in L^{\frac{n}{2s}}(\Omega)$ are such that
$$ \langle\Lambda_{\gamma, q_1} (f_1|_{\Omega_e}),f_2|_{\Omega_e}\rangle = \langle\Lambda_{\gamma, q_2} (f_1|_{\Omega_e}),f_2|_{\Omega_e}\rangle$$
for all $f_j\in C^\infty_c(W_j)$, $j=1,2$. Then $q_1=q_2$ in $\Omega$.
\end{proposition}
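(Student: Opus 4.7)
The plan is to apply the fractional Liouville transform introduced in \cite{C20} to reduce the fractional conductivity equation to a fractional Schrödinger equation, and then invoke the partial-data uniqueness theorem for the fractional Calderón problem with $L^{n/2s}$ potentials \cite{GSU20, RS20}.

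First I would introduce the transformed function $v := \gamma^{1/2} u_f$, where $u_f \in H^s(\R^n)$ is the unique solution of \eqref{direct-problem-cond} with $F\equiv 0$ and potential $q_j$. Using that $\gamma^{1/2}-1\in C^\infty_c(\Omega)$ together with the symmetrization identity for the two-point operators recalled in \cite{C20, C20a}, one obtains the Liouville identity
\begin{equation*}
\gamma^{-1/2}(\nabla\cdot)^s(\Theta\cdot\nabla^s u_f) = (-\Delta)^s v + q_\gamma v \text{ in } H^{-s}(\R^n),
\end{equation*}
with explicit background potential $q_\gamma := \gamma^{-1/2}(-\Delta)^s(\gamma^{1/2}-1)$. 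Since $\gamma^{1/2}-1\in C^\infty_c(\Omega)$, this $q_\gamma$ is bounded, smooth inside $\Omega$ and decays like $|x|^{-n-2s}$ at infinity; in particular $q_\gamma\in L^\infty(\R^n)\cap L^{n/2s}(\R^n)$. Multiplying the conductivity equation by $\gamma^{-1/2}$ then shows that $v$ solves the fractional Schrödinger equation
\begin{equation*}
((-\Delta)^s + Q_j) v = 0 \text{ in } \Omega, \qquad v = f \text{ in } \Omega_e,
\end{equation*}
with transformed potential $Q_j := q_\gamma + q_j/\gamma \in L^{n/2s}(\Omega)$; the exterior datum is unchanged since $\gamma\equiv 1$ in $\Omega_e$.

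Next I would match the Dirichlet-to-Neumann maps. Because $\gamma^{1/2}\equiv 1$ and $q_\gamma\equiv 0$ on $\overline{W_1\cup W_2}\subset\Omega_e$, a direct computation using the Liouville identity yields, for test data $f_j\in C^\infty_c(W_j)$,
\begin{equation*}
\langle\Lambda_{\gamma,q_j}(f_1|_{\Omega_e}), f_2|_{\Omega_e}\rangle = \langle\Lambda^{\mathrm{Sch}}_{Q_j}(f_1|_{\Omega_e}), f_2|_{\Omega_e}\rangle,
\end{equation*}
where $\Lambda^{\mathrm{Sch}}_{Q_j}$ denotes the partial Dirichlet-to-Neumann map of the fractional Schrödinger problem with potential $Q_j$. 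The hypothesis of the proposition therefore forces the partial DtN maps of $(-\Delta)^s + Q_1$ and $(-\Delta)^s + Q_2$ to agree on $W_1\times W_2$. Applying the partial-data uniqueness theorem for the fractional Calderón problem with scaling-critical potentials of \cite{RS20} (building on \cite{GSU20}) then gives $Q_1 = Q_2$ in $\Omega$. Subtracting the common background $q_\gamma$ and multiplying by the positive function $\gamma$ yields $q_1 = q_2$ in $\Omega$.

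The main technical obstacle is justifying the Liouville identity rigorously at the low regularity $q_j\in L^{n/2s}(\Omega)$. For the principal conductivity term $(\nabla\cdot)^s(\Theta\cdot\nabla^s u)$ the identity is established in \cite{C20,C20a} via the decomposition $\gamma^{1/2}=1+\phi$ with $\phi\in C^\infty_c(\Omega)$, reducing the computation to manipulations of the two-point operators in $L^2(\R^{2n})$ paired against $H^s$ test functions. Incorporating the potential term $q_j u_f$ poses no new difficulty, as it lies in $H^{-s}(\Omega)$ by Hölder's inequality and fractional Sobolev embedding, exactly as exploited in the discussion of condition (Aq) for the well-posedness of \eqref{direct-problem-cond}.
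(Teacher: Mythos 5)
Your argument follows essentially the same route as the paper: apply the fractional Liouville reduction $w := \gamma^{1/2}u$ to convert the conductivity problem to a Schr\"odinger problem with potential $Q_j = q_\gamma + \gamma^{-1}q_j$, match the partial Dirichlet-to-Neumann maps using that $\gamma \equiv 1$ in $\Omega_e$, and invoke the partial-data uniqueness from \cite{RS20}. Two small slips, neither of which undermines the conclusion, are worth flagging. First, the sign of your background potential is off: the paper has $q_\gamma = -\gamma^{-1/2}(-\Delta)^s(\gamma^{1/2}-1)$, while you drop the minus sign; since $q_\gamma$ is a common shift that cancels in $Q_1-Q_2$ this does not affect the conclusion $q_1=q_2$, but the Liouville identity as you state it would not check out. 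Second, the assertion that ``$q_\gamma \equiv 0$ on $\overline{W_1\cup W_2}$'' is false: $(-\Delta)^s(\gamma^{1/2}-1)$ is nonlocal and does not vanish outside $\Omega$, so $q_\gamma$ is globally supported even though $\gamma^{1/2}-1$ is compactly supported. Fortunately your DtN matching does not actually require this; it only requires $\gamma^{1/2}\equiv 1$ in $\Omega_e$ so that $\gamma^{1/2}f_j = f_j$, together with the bilinear-form identity $B_{\gamma,q}(u,v)=B_{Q}(\gamma^{1/2}u,\gamma^{1/2}v)$, which is precisely how the paper's computation \eqref{EqReductionInnerProductEquivalence} proceeds. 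So the argument is sound once the spurious claim is removed and the sign is corrected.
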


The proof of Proposition \ref{uniqueness-frac-cond} is given in Section \ref{SectionProofCondEq} and with this uniqueness result in hand, we then turn to the instability result. We remark that the uniqueness question in Proposition \ref{uniqueness-frac-cond} differs somewhat from the one investigated in \cite{C20a, CRZ22} and subsequent works, as in our case the conductivity $\gamma$ is fixed a priori, and a potential $q\neq 0$ is to be recovered. In this sense the question is rather closer to the original one asked in \cite{GSU20}, of which it represents a possible generalization to operators of non-constant coefficients. We highlight that uniqueness for this inverse problem has been implicitly solved in \cite[Section 6]{C20a} in the special case $A\equiv 0$. However, the proof of that result is made quite technical by the introduction of the magnetic potential $A$ and the stricter regularity assumptions. Therefore, for the convenience of the reader, we have included the argument for Proposition \ref{uniqueness-frac-cond} in Section \ref{SectionProofCondEq} below.

\subsection{On the condition (Aq)}\label{subsec:condition-Aq}
Having discussed the constant coefficient, the variable coefficient and the conductivity version of the fractional Calder\'on problem, let us briefly comment on the assumption (Aq). This condition allows to consider well-defined Dirichlet-to-Neumann operators in all three instances. 

More precisely, it guarantees that for $f \in H^{s}(\Omega_e)$, $F\in (H^{s}(\Omega))^{\ast}$ and $\tilde{q}\in B_{r_0}^{L^{\frac{n}{2s}}(\Omega)}(q)$ with $q$ satisfying the condition (Aq), problems such as
\begin{equation}
\begin{cases}
\begin{alignedat}{2}
\nabla \cdot x_{n+1}^{1-2s} \tilde{a} \nabla \tilde{u} & = 0 \quad &&\mbox{in } \R^{n+1}_+,\\
-c_s \lim\limits_{x_{n+1} \rightarrow 0} x_{n+1}^{1-2s} \p_{n+1}\tilde{u} + \tilde{q} u & = F \quad &&\mbox{in } \Omega,\\
\tilde{u} & = f \quad &&\mbox{in } \Omega_e \times \{0\},
\end{alignedat}
\end{cases}
\end{equation}
can be uniquely solved. In particular, viewing these equations as giving rise to bounded operators 
\begin{align*}
T_{\tilde{q}} : H^{s}(\Omega_e) \times (H^{s}(\Omega_e))^{\ast} \mapsto H^{s}(\R^n), \ (f,F) \mapsto  u:= \tilde{u}|_{\R^n \times \{0\}}
\end{align*}
by the Fredholm alternative, the condition (Aq) ensures that for all $\tilde{q}\in B_{r_0}^{L^{\frac{n}{2s}}(\Omega)}(q)$ these operators are well-defined and bounded.

Special examples of settings in which the condition (Aq) is satisfied are given by
\begin{itemize}
\item $q\in L^{\infty}$ with $q\geq 0$,
\item $\|q\|_{L^{\infty}(\Omega)} \leq \lambda_1(\Omega)$, where $\lambda_1(\Omega)$ denotes the smallest eigenvalue of the operator with vanishing potential in $\Omega$.
\end{itemize}
We remark that due to the discrete spectrum of the fractional Laplacian on bounded domains in all its three formulations, generically the condition (Aq) is satisfied.
Without this condition, Cauchy data would provide the natural generalization.

\subsection{Additional notation}\label{ssec:prel-Notation}

We conclude this section by introducing some more notation. Let $A,B \geq 0$. When we write $A \lesssim B$, we mean that there exists some constant $C>0$ such that
\begin{align*}
A \leq CB.
\end{align*}
Similarly, we write $A \sim B$, when we mean that there exists a constant $C>0$ such that
\begin{align*}
\frac{1}{C}B \leq A \leq CB.
\end{align*}

\section{Instability for the constant coefficient fractional Calderón problem}
\label{sec:instab}

In this section we prove the instability result in the case of constant coefficients (Theorem \ref{ThmInstability}). We will see that analytic regularization for a suitable comparison operator leads to exponential instability.

\subsection{The general instability framework}

We first provide the general arguments leading to the instability result.\\
The main argument is based on the following abstract result from \cite{KRS21} which will allow us to deduce the desired instability result ``by comparison'' with a suitably defined auxiliary operator.

\begin{theorem}[Theorem 5.14 in \cite{KRS21}]\label{Thm5.14[KRS21]}
Let $X$ be a Banach space, $Y$ a separable Hilbert space and let $F: X \to B(Y,Y')$ be a continuous map. Let $X_1 \subset X$ be a closed subspace and $K = \{u \in X: \Vert u \Vert_{X_1} \leq r \}$ for some $r>0$. Assume that $i: X_1 \to X$ is compact with $e_k(i) \gtrsim k^{-m'}$ for some $m'>0$. Also, assume that there exists an orthonormal basis $(\varphi_j)_{j\in\N}$ of $Y$ and constants $C,\rho,\mu > 0$ so that $F(u)$ and $F(u)^*$ satisfy
\begin{align}\label{Thm5.14[KRS21]Eq1}
\Vert F(u) \varphi_k \Vert_{Y'}, \Vert F(u)^* \varphi_k \Vert_{Y'} \leq Ce^{-\rho k^\mu}, \qquad k \geq 1,
\end{align}
uniformly over $u \in K$. Then there exists $c>0$ with the property that for any $\varepsilon>0$ small enough there are $u_1$, $u_2$ such that
\begin{align*}
\Vert u_1 - u_2 \Vert_X \geq \varepsilon, \quad \Vert u_j \Vert_{X_1} \leq r, \quad \Vert F(u_1) - F(u_2) \Vert_{B(Y,Y')} \leq \exp(-c \varepsilon^{-\frac{\mu}{m'(\mu+2)}}).
\end{align*}
In particular, if one has the stability property
\begin{align*}
\Vert u_1 - u_2 \Vert_X \leq \omega(\Vert F(u_1) - F(u_2) \Vert_{B(Y,Y')} ), \qquad u_1, u_2 \in K,
\end{align*}
then necessarily $\omega(t) \gtrsim \vert \log t \vert^{-\frac{m'(\mu+2)}{\mu}}$ for $t$ small.
\end{theorem}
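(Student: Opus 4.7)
The strategy follows the Mandache--Di Cristo--Rondi instability scheme in the abstract form of \cite{KRS21}, and consists of three ingredients: (i) an entropy \emph{upper} bound on the image $F(K)\subset B(Y,Y')$, obtained from the two-sided decay \eqref{Thm5.14[KRS21]Eq1} of $F(u)\varphi_k$ and $F(u)^\ast\varphi_k$; (ii) an entropy \emph{lower} bound (i.e. a packing estimate) on $K\subset X$, obtained from the hypothesis $e_k(i)\gtrsim k^{-m'}$; and (iii) a pigeonhole step comparing the two counts. The interplay between the cardinalities will determine the final exponent $\mu/(m'(\mu+2))$.

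\textbf{Step 1 (compression of $F$).} Let $P_N:Y\to Y$ denote the orthogonal projection onto $\mathrm{span}\{\varphi_1,\ldots,\varphi_N\}$, and let $P_N^\ast$ be the corresponding projection on $Y'$ via the Hilbert/reflexive identification. Decompose
\begin{align*}
F(u) = P_N^\ast F(u) P_N + (\Id-P_N^\ast)F(u) + P_N^\ast F(u)(\Id-P_N).
\end{align*}
Expanding in the ONB $(\varphi_k)$ and applying Cauchy--Schwarz, the column bound gives $\|F(u)(\Id-P_N)\|_{B(Y,Y')}\lesssim\bigl(\sum_{k>N}e^{-2\rho k^\mu}\bigr)^{1/2}\lesssim e^{-\rho N^\mu/2}$, and the row bound applied to $F(u)^\ast$ controls $\|(\Id-P_N^\ast)F(u)\|_{B(Y,Y')}$ analogously. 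Hence, uniformly over $u\in K$, the operator $F(u)$ is $\eta$-close in $B(Y,Y')$ to the rank-$N$ piece $P_N^\ast F(u)P_N$ for $\eta\lesssim e^{-\rho N^\mu/2}$. The latter is described by an $N\times N$ matrix with entries of modulus bounded by $C$, so the set of such matrices admits an $\eta/3$-net in operator norm of cardinality at most $(C'/\eta)^{N^2}$. Choosing $N\sim(|\log\eta|/\rho)^{1/\mu}$ yields an $\eta$-net of $F(K)$ in $B(Y,Y')$ with log-cardinality $\lesssim|\log\eta|^{1+2/\mu}$.

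\textbf{Step 2 (packing of $K$).} The lower bound $e_k(i)\geq c_0 k^{-m'}$ means that $i(B_1^{X_1})$ is not coverable by $2^{k-1}$ balls of $X$-radius $c_0 k^{-m'}$. Taking $k\sim\varepsilon^{-1/m'}$ and $r$-rescaling, a standard comparison between entropy and packing numbers produces a $\varepsilon$-separated subset of $K$ (in the $X$-norm) of cardinality at least $\exp(c\,\varepsilon^{-1/m'})$.

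\textbf{Step 3 (pigeonhole and conclusion).} If the packing number of Step 2 strictly exceeds the covering number of Step 1, two distinct elements $u_1,u_2$ of the separated family lie in the same $\eta$-ball of the net, so $\|F(u_1)-F(u_2)\|_{B(Y,Y')}\leq 2\eta$ while $\|u_1-u_2\|_X\geq\varepsilon$. Equating the exponential rates, $c\,\varepsilon^{-1/m'}\sim|\log\eta|^{(\mu+2)/\mu}$, yields $|\log\eta|\sim\varepsilon^{-\mu/(m'(\mu+2))}$, i.e. $2\eta\leq\exp(-c\,\varepsilon^{-\mu/(m'(\mu+2))})$, which is precisely the claimed bound. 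The modulus-of-continuity statement then follows by contraposition: any function $\omega$ controlling $\|q_1-q_2\|_X$ in terms of $\|F(u_1)-F(u_2)\|_{B(Y,Y')}$ must dominate the inverse of our instability pair, giving $\omega(t)\gtrsim|\log t|^{-m'(\mu+2)/\mu}$.

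\textbf{Main obstacle.} The delicate point is Step 1: the column decay alone would only compress the domain of $F(u)$, leaving an infinite-dimensional codomain whose covering is not quantifiable with a finite exponent. The \emph{two-sided} decay is what legitimises the $N\times N$ matrix reduction and hence the crucial $N^2$ factor in the log-cardinality (and thus the exponent $1+2/\mu$). Careful use of the Hilbert identification $Y\simeq Y''$ and of the fact that $P_N^\ast F(u)$ converges to $F(u)$ in operator norm (rather than just strongly) is where the row bound on $F(u)^\ast$ gets consumed.
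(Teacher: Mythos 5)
The paper states this result as a direct citation of Theorem 5.14 in \cite{KRS21} and does not reproduce its proof, so there is no internal proof to compare against. Your sketch is correct and recovers exactly the Mandache--Di Cristo--Rondi pigeonhole scheme that \cite{KRS21} uses: the two-sided decay yields an $\eta$-net of $F(K)$ with $\log$-cardinality $\sim \lvert\log\eta\rvert^{(\mu+2)/\mu}$ via the $N\times N$ matrix reduction, the entropy lower bound on $i$ yields an $\varepsilon$-separated set in $K$ of size $\exp(c\varepsilon^{-1/m'})$, and the pigeonhole comparison fixes the exponent $\mu/(m'(\mu+2))$; your remark that the row bound on $F(u)^*$ is precisely what closes the third term $(\Id-P_N^*)F(u)$ (without it $F(u)P_N$ still takes values in an infinite-dimensional ball in $Y'$, which has no finite $\eta$-net) is the correct diagnosis of where the two-sidedness is consumed.
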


\begin{remark}\label{RemarkPotentialsNotCenteredAt0}
We remark that in the proof of this result in \citep{KRS21} the condition $\Vert u \Vert_{X_1} \leq r$ in the definition of $K$ is just used in order to have compactness and some uniform bounds over $K$. In particular, the set $K$ does not need to be centered at $0$, which will be used in the proof of Theorem \ref{ThmInstability}.
\end{remark}

Let $\Omega, \Omega', W \subset \R^n$ and $\bar{q}\in L^{\frac{n}{2s}}(\Omega)$ be as in the statement of Theorem \ref{ThmInstability}. We will apply Theorem \ref{Thm5.14[KRS21]} to $F = \Gamma$, with
\begin{align*}
\Gamma: L^{\frac{n}{2s}}(\Omega)\cap B_{r_0}^{L^{\frac{n}{2s}}(\Omega)}(\bar{q}) \rightarrow B(\widetilde{H}^s(W), H^{-s}(W)), \qquad q \mapsto \Gamma(q) := \Lambda_q - \Lambda_0,
\end{align*}
where $r_0>0$ and $\bar{q}$ satisfy the conditions in (Aq). We remark that $\Lambda_0$ is also well-defined since zero is not a Dirichlet eigenvalue of $(-\Delta)^s$ on $\Omega$.
In order to show that $\Gamma$ satisfies the bounds \eqref{Thm5.14[KRS21]Eq1}, we will introduce a linear comparison operator $A: \widetilde{H}^s (W) \to H^s(\Omega')$ which dominates the nonlinear (in $q$) operator $\Gamma$, in the sense that
\begin{align*}
\Vert \Gamma(q) f \Vert_{H^{-s}(W)}, \Vert \Gamma(q)^* f \Vert_{H^{-s}(W)} \leq \Vert Af \Vert_{H^s(\Omega')}
\mbox{ for all } f\in \widetilde{H}^s(W)
\end{align*}
holds uniformly over $q \in K$ for some subset $K \subset L^{\frac{n}{2s}}(\Omega)$. We then show that the singular values of this operator $A$ decay exponentially and choose $\varphi_k$ in Theorem \ref{Thm5.14[KRS21]} as the singular value basis of $A$. This then guarantees the validity of the condition \eqref{Thm5.14[KRS21]Eq1}.

After a suitable operator $A$ has been identified, it thus suffices to prove decay of the associated entropy estimates. To this end, we will rely on regularization properties of the operator $A$, whose derivation will constitute a main part of this section.

\begin{lemma}\label{LemmaSingValuesOfASmoothCase}
Let $W,\Omega' \subset \R^n$ be open, non-empty, Lipschitz domains and let $s_1,s_2 \in \R$. Let $A:\widetilde{H}^{s_2}(W) \to H^{s_1}(\Omega')$ be a compact linear operator with singular values $\sigma_k(A)$. Assume that $A(B_1^{\widetilde{H}^{s_2}(W)}) \subset A^{\sigma,\rho}(\overline{\Omega'})$ for some $\sigma\geq1$ and $\rho>0$ is bounded. Then there exists $c>0$ such that
\begin{align*}
\sigma_k(A) \lesssim \exp(-ck^{\frac{1}{n\sigma}}).
\end{align*}
\end{lemma}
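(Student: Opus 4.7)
The plan is to factor $A$ through the Gevrey-type Hilbert space $A^{\sigma,\rho}(\overline{\Omega'})$, use the entropy-number estimate for the canonical inclusion from Lemma \ref{lem:entropy_numbers_embedding} together with the multiplicativity of entropy numbers, and finally convert entropy-number decay into singular-value decay via the equivalence in Lemma \ref{Lemma3.9[KRS21]}.

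More precisely, since by assumption $A(B_1^{\widetilde{H}^{s_2}(W)}) \subset A^{\sigma,\rho}(\overline{\Omega'})$ is a bounded subset, there is a constant $M>0$ such that $A$, viewed as a map $\widetilde A: \widetilde{H}^{s_2}(W) \to A^{\sigma,\rho}(\overline{\Omega'})$, is bounded with $\|\widetilde A\| \leq M$. Let $j: A^{\sigma,\rho}(\overline{\Omega'}) \to H^{s_1}(\Omega')$ denote the canonical inclusion, so that $A = j \circ \widetilde A$. The multiplicativity property of entropy numbers together with $e_1(\widetilde A) \leq M$ (and $e_j(\widetilde A) \leq e_1(\widetilde A)$) then yields
\begin{align*}
e_k(A) = e_k(j\circ \widetilde A) \leq e_k(j)\,e_1(\widetilde A) \leq M\, e_k(j).
\end{align*}
By Lemma \ref{lem:entropy_numbers_embedding}, there exist $c_1, \tilde\rho>0$ with $e_k(j) \leq c_1 \exp(-\tilde\rho k^{1/(n\sigma+1)})$, hence
\begin{align*}
e_k(A) \lesssim \exp\bigl(-\tilde\rho\, k^{1/(n\sigma+1)}\bigr).
\end{align*}

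Since $\widetilde{H}^{s_2}(W)$ and $H^{s_1}(\Omega')$ are separable Hilbert spaces and $A$ is compact between them, we may apply Lemma \ref{Lemma3.9[KRS21]}. Setting $\mu/(1+\mu) = 1/(n\sigma+1)$ gives $\mu = 1/(n\sigma)$, so the lemma translates the entropy-number bound $e_k(A) \lesssim \exp(-\tilde\rho k^{\mu/(\mu+1)})$ into the singular-value bound $\sigma_k(A) \lesssim \exp(-c k^{1/(n\sigma)})$ for some $c>0$, which is the desired conclusion.

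There is no real obstacle here beyond verifying that the Hilbert-space hypotheses of Lemma \ref{Lemma3.9[KRS21]} are met (both $\widetilde{H}^{s_2}(W)$ and $H^{s_1}(\Omega')$ are separable Hilbert spaces by construction) and that $A$ is compact, which is already assumed. The main conceptual content is simply the factorization through $A^{\sigma,\rho}(\overline{\Omega'})$, which is possible precisely because the image lies in that smaller space and entropy numbers respect composition. All exponential decay of singular values is therefore inherited from the exponential decay of the entropy numbers of the Gevrey-to-Sobolev embedding.
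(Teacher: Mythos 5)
Your proof is correct and follows essentially the same route as the paper's: factor $A = j \circ \widetilde{A}$ through $A^{\sigma,\rho}(\overline{\Omega'})$, invoke the entropy bound for $j$ from Lemma \ref{lem:entropy_numbers_embedding}, and convert entropy decay to singular-value decay via Lemma \ref{Lemma3.9[KRS21]}. The only cosmetic difference is that you apply the entropy-to-singular-value conversion to $A$ itself after multiplying entropy numbers, whereas the paper converts $e_k(j)$ to $\sigma_k(j)$ first and then uses $\sigma_k(A)\leq \sigma_k(j)\|\widetilde{A}\|$; both are equally valid.
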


\begin{proof}
We factorize the operator $A = j \circ \widetilde{A}$, where $\widetilde{A}: \widetilde{H}^{s_2}(W) \rightarrow A^{\sigma,\rho}(\overline{\Omega}')$ and where $j$ denotes the inclusion $j: A^{\sigma,\rho}(\overline{\Omega}') \rightarrow H^{s_1}(\overline{\Omega}')$. Then,
\begin{align*}
\sigma_k(A) \leq \sigma_k(j)\|\widetilde{A}\| \leq C \|\widetilde{A}\| e^{- c k^{\frac{1}{n\sigma}}} ,
\end{align*}
where we used that $e_k(j) \lesssim e^{- \tilde{c}k^{\frac{1}{n\sigma +1}}}$ (recall Lemma \ref{lem:entropy_numbers_embedding}) implies that $\sigma_k(j) \lesssim e^{- ck^{\frac{1}{n\sigma}}}$ (cf. Lemma \ref{Lemma3.9[KRS21]} or the original reference in \cite[Lemma 3.9]{KRS21}).
\end{proof}

Given Lemma \ref{LemmaSingValuesOfASmoothCase}, we observe that in order to prove Theorem \ref{ThmInstability} it suffices to deduce (analytically) regularizing properties for a suitable comparison operator $A$. As shown in the next results, it is possible to choose $A$ to be a bounded linear operator and to consider associated regularity estimates.

\begin{proposition}\label{prop:UniformBounds}
Let $s\in (0,1)$. Let $\Omega \subset \R^n$ be open, non-empty, bounded and Lipschitz, $\Omega'\subset\Omega$ be compact, non-empty, Lipschitz and $W\subset\Omega_e$ open, non-empty, Lipschitz. Let $A: \widetilde{H}^s(W) \to H^s(\Omega')$ be defined as
\begin{align*}
A(f) = \restr{u_0}{\Omega'},
\end{align*}
where $u_0$ is the solution to
\begin{equation*}\label{EqPropAnalyticityFractCalderon}
\begin{cases}
\begin{alignedat}{2}
 (-\Delta)^s  u_0 &= 0 \qquad &&\text{in } \Omega,\\
u_0 &= f \qquad &&\text{in } \Omega_e.
\end{alignedat}
\end{cases}
\end{equation*}
Then $A(\widetilde{H}^s(W))\subset G^1(\Omega')$, and there exist constants $C, R>0$ (depending only on $\Omega', \Omega, W,n$) such that
\begin{align}\label{EqPropUniformBounds}
\Vert D^\tau u \Vert_{L^\infty(\Omega')} \leq C \|f\|_{H^s(W)} R^{\vert\tau\vert} \vert\tau\vert^{\vert\tau\vert} \quad \mbox{for all } \tau\in\N_0^n
\end{align}
holds for all $u\in A(\widetilde{H}^s(W))$ with $u = A(f)$.
\end{proposition}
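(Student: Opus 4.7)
My plan is to exploit the heat kernel representation recalled at the end of Section~\ref{ssec:prel-frlap}, together with the positive distance $d := \dist(\Omega', \Omega_e) > 0$, to deduce \eqref{EqPropUniformBounds}. Setting $g := (-\Delta)^s u_0$, the well-posedness of the Dirichlet problem and the continuity of $(-\Delta)^s : H^s(\R^n) \to H^{-s}(\R^n)$ give
\begin{align*}
\|g\|_{H^{-s}(\R^n)} \lesssim \|u_0\|_{H^s(\R^n)} \lesssim \|f\|_{H^s(W)},
\end{align*}
while the interior equation $(-\Delta)^s u_0 = 0$ in $\Omega$ forces $\supp g \subset \overline{\Omega_e}$. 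Thus the boundary datum enters only through this single $H^{-s}$-bound on $g$.

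Starting from the representation
\begin{align*}
u_0(x) = \frac{1}{\Gamma(s)} \int_0^\infty \langle g, K_t(x,\cdot)\rangle \, \frac{dt}{t^{1-s}}, \qquad K_t(x,z) = (4\pi t)^{-n/2}\exp\!\left(-\tfrac{|x-z|^2}{4t}\right),
\end{align*}
I would pick a smooth cutoff $\chi$ with $\chi \equiv 0$ on $\Omega'$ and $\chi \equiv 1$ on $\R^n\setminus U$, where $U$ is any open set with $\Omega'\Subset U\Subset \Omega$. Since $\supp g \subset \overline{\Omega_e}\subset \R^n\setminus U$, multiplying the test function by $\chi$ leaves the pairing unchanged but makes $\chi\, K_t(x,\cdot)$ a Schwartz function of $z$ uniformly for $x\in\Omega'$, so derivatives in $x$ can be taken under the integral:
\begin{align*}
D^\tau u_0(x) = \frac{1}{\Gamma(s)} \int_0^\infty \langle g, \chi\, D_x^\tau K_t(x,\cdot)\rangle_{H^{-s},H^s} \, \frac{dt}{t^{1-s}}, \qquad x\in \Omega',\ \tau\in \N_0^n.
\end{align*}

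The decisive analytic step is the off-diagonal derivative bound
\begin{align*}
|D_x^\tau K_t(x,z)| \leq C_0^{|\tau|+1}\, |\tau|!\, d^{-|\tau|}\, (4\pi t)^{-n/2}\exp\!\left(-\tfrac{d^2}{8t}\right), \qquad x \in \Omega',\ z \in \R^n \setminus U,
\end{align*}
which I would prove by applying Cauchy's formula on a polydisc of radius $\sim d/\sqrt{n}$ around $x$ to the holomorphic extension $w\mapsto K_t(w,z)$, using that on such a polydisc $\Re((w-z)^2)\geq d^2/2$. This is the quantitative analytic-smoothing estimate for the Gaussian collected in the appendix on heat-equation analyticity. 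Combining the pointwise bound with $H^{-s}$--$H^s$ duality and an elementary $H^s(\R^n)$-estimate on the Gaussian majorant (integrable against $t^{s-1}\,dt$ on $(0,\infty)$, since $\exp(-d^2/(8t))$ absorbs the small-$t$ singularity of $t^{s-1-n/2}$ while $s-1-n/2<-1$ controls the $t\to\infty$ tail), I would arrive at
\begin{align*}
\|D^\tau u_0\|_{L^\infty(\Omega')} \lesssim |\tau|!\, R^{|\tau|}\, \|f\|_{H^s(W)}, \qquad R \sim 1/d,
\end{align*}
and the elementary inequality $|\tau|! \leq |\tau|^{|\tau|}$ then yields \eqref{EqPropUniformBounds}.

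The main technical obstacle is the careful treatment of the distributional pairing and the convergence of the $t$-integral: $K_t(x,\cdot)$ is neither Schwartz nor in $H^s(\R^n)$ uniformly in $t$, so the cutoff $\chi$ is essential both to localize the Cauchy estimate away from the diagonal and to produce an integrable $t$-majorant. The strictly positive distance $d$ plays a dual role: it fixes the radius of holomorphic extension in the Cauchy step, yielding the factor $d^{-|\tau|}$, and through the Gaussian factor $\exp(-d^2/(8t))$ it renders the integral convergent at $t=0$.
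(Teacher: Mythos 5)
Your proposal replaces the paper's split between small and large $t$ by a single off-diagonal Cauchy estimate, which is a genuinely different route; unfortunately, as written it has a quantitative gap at large $t$.

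The core issue is your claimed integrability bound. You dominate the test function by its $L^\infty$-bound $(4\pi t)^{-n/2}\exp(-d^2/(8t))$ and conclude that the integrand behaves like $t^{s-1-n/2}$. But the duality pairing needs $\|\chi D_x^\tau K_t(x_0,\cdot)\|_{H^s(\R^n)}$, not an $L^\infty$-bound, and the $L^2(\R^n\setminus U)$-norm of the Gaussian majorant decays only like $t^{-n/4}$ as $t\to\infty$ (you are integrating a pointwise bound $\sim t^{-n/2}e^{-|x_0-z|^2/(8t)}$ over an \emph{unbounded} set, which produces a volume factor $\sim t^{n/4}$). The Cauchy estimate on a fixed polydisc of radius $\sim d$ gives the same $t^{-n/4}$ decay for the $H^1$-part, so the $H^s$-norm of the test function is $\sim t^{-n/4}$ at large $t$, independently of $|\tau|$. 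The resulting integral $\int_1^\infty t^{s-1-n/4}\,dt$ diverges whenever $s\geq n/4$, i.e., for $n=2$, $s\in[1/2,1)$ and for $n=3$, $s\in[3/4,1)$. What is missing is the extra $t^{-|\tau|/2}$ gain at large $t$, which comes from the analyticity radius of the heat semigroup growing like $\sqrt{t}$, or equivalently from the Plancherel computation $\|D_x^\tau K_t(x_0,\cdot)\|^2_{\dot H^s(\R^n)} = \int |\xi|^{2s}|\xi^\tau|^2 e^{-2t|\xi|^2}d\xi \sim \Gamma(|\tau|+s+n/2)\,t^{-|\tau|-s-n/2}$. Incorporating this forces you back into precisely the $t\lessgtr 1$ split that the paper uses: off-diagonal control (your Cauchy idea, or the paper's interior heat analyticity via $g\equiv 0$ in $\Omega$) for small $t$, and the intrinsic $t^{-|\tau|/2}$ decay of the heat semigroup for large $t$.

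There is also a minor inconsistency in the cutoff setup. You choose $\chi\equiv 0$ on $\Omega'$ and $\chi\equiv 1$ outside $U$, but then invoke the Cauchy estimate only for $z\in\R^n\setminus U$, while $\chi D_x^\tau K_t(x_0,\cdot)$ is nonzero on the annulus $U\setminus\Omega'$, where $|x_0-z|$ can be arbitrarily small and the Cauchy bound fails. This is easily repaired: take $\chi\equiv 0$ on $U$ and $\chi\equiv 1$ outside a slightly larger $U'$ with $U\Subset U'\Subset\Omega$, and set $d:=\dist(\Omega',\partial U)$; then $\supp\chi\subset\R^n\setminus U$ and the Cauchy estimate applies on all of $\supp\chi$, while the identity $\langle g,\chi\phi\rangle=\langle g,\phi\rangle$ still holds because $\supp g\subset\overline{\Omega_e}\subset\R^n\setminus U'$.

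With these two corrections, your approach would work and is a reasonable alternative to the paper's: the off-diagonal Cauchy/polydisc estimate replaces the paper's inductive Caccioppoli analyticity bounds for the heat equation (Appendix, Lemma~\ref{Lemma2.2[Emz17]} and~\ref{LemmaLInftyEstimateHeatEq}) in the small-$t$ regime, which is shorter. But the large-$t$ regime still requires the heat-semigroup analytic smoothing (or its Fourier-side equivalent), exactly as in the paper's $v_1$-estimate via the bound $|\partial_x^\alpha w(x,t)|\lesssim R^{|\alpha|}|\alpha|!\,(t-\tfrac12)^{-|\alpha|/2-n/4}$ — the positive distance $d$ alone cannot produce that gain.
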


We postpone the proof of Proposition \ref{prop:UniformBounds} to Section \ref{sec:reg} and first show how this then implies our main result. 
To this end, we discuss a final auxiliary result which provides the desired exponential bounds for the singular values of the operator $A$.

\begin{proposition}\label{prop:singular_value}
Let $A$ be defined as in Proposition \ref{prop:UniformBounds} and let $\sigma_k$ be the singular values of $A$. Then, for some $c>0$ it holds that
\begin{align*}
\sigma_k(A) \lesssim \exp(-c k^{\frac{1}{n}}).
\end{align*}
\end{proposition}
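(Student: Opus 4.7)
The proof proceeds by directly combining Proposition~\ref{prop:UniformBounds} with Lemma~\ref{LemmaSingValuesOfASmoothCase} applied at regularity index $\sigma=1$. The main idea is that Proposition~\ref{prop:UniformBounds} guarantees that $A$ maps into the space of analytic functions $G^1(\overline{\Omega'})$, with a derivative bound whose dependence on $f$ factors out as a multiplicative constant. The only real work is verifying that this yields a bounded map into the Hilbert space $A^{1,\rho}(\overline{\Omega'})$ for a suitably chosen $\rho>0$, uniformly in $f$.

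The plan is as follows. First, I would take $u=A(f)$ for $f\in \widetilde H^s(W)$ with $\|f\|_{H^s(W)}\le 1$. Invoking Proposition~\ref{prop:UniformBounds}, there exist $C,R>0$ independent of $f$ such that
\begin{align*}
\|D^\tau u\|_{L^\infty(\Omega')} \le C R^{|\tau|} |\tau|^{|\tau|} \quad \text{for all } \tau\in\N_0^n.
\end{align*}
Since $\Omega'$ is bounded, this immediately yields an $L^2$-bound of the same form (up to a factor $|\Omega'|^{1/2}$), and summing over $|\tau|\le \ell$ we obtain $\|u\|_{H^\ell(\overline{\Omega'})}\le \tilde C (\ell+1)^{n/2} R^\ell \ell^\ell$ for some $\tilde C>0$ independent of $f$ and $\ell$.

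Next, I plug this estimate into the defining norm of $A^{1,\rho}(\overline{\Omega'})$:
\begin{align*}
\|u\|_{A^{1,\rho}(\overline{\Omega'})}^2
&=\sum_{\ell=0}^\infty e^{-2\ell\rho}e^{-2\ell\log(\ell+1)}\|u\|_{H^\ell(\overline{\Omega'})}^2 \\
&\le \tilde C^2 \sum_{\ell=0}^\infty (\ell+1)^{n}\,e^{-2\ell\rho}(\ell+1)^{-2\ell}R^{2\ell}\ell^{2\ell}.
\end{align*}
Since $\ell^\ell(\ell+1)^{-\ell}\le 1$, the series is dominated by $\sum_\ell (\ell+1)^n (R^2 e^{-2\rho})^\ell$, which converges as soon as $\rho>\log R$. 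Fixing any such $\rho$ (which, by Remark~\ref{RemarkInclusionGevreySpace}, is legitimately independent of the constant $C$ and hence of $f$), I conclude that $A(B_1^{\widetilde H^s(W)})\subset A^{1,\rho}(\overline{\Omega'})$ is a bounded set.

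Finally, the operator $A\colon \widetilde H^s(W)\to H^s(\Omega')$ is compact (for instance because it already factors boundedly through $A^{1,\rho}(\overline{\Omega'})$ which embeds compactly into $H^s(\overline{\Omega'})$ by Lemma~\ref{lem:entropy_numbers_embedding}). An application of Lemma~\ref{LemmaSingValuesOfASmoothCase} with $\sigma=1$ then yields $\sigma_k(A)\lesssim \exp(-ck^{1/n})$ for some $c>0$, as claimed. The only subtlety is the first step of propagating the pointwise Gevrey-type bound on derivatives to a uniform bound in the $A^{1,\rho}$-norm; this is entirely analogous to the inclusion $G^1\subset\bigcup_\rho A^{1,\rho}$ proved in Proposition~\ref{PropGevreySpaceIdentitiy}, with the crucial point being that the parameter $\rho$ depends only on $R$ and not on the prefactor $C$ (hence not on $f$), as noted in Remark~\ref{RemarkInclusionGevreySpace}.
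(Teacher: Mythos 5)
Your proposal is correct and follows essentially the same route as the paper: apply Proposition~\ref{prop:UniformBounds} together with Remark~\ref{RemarkInclusionGevreySpace} to show that $A$ maps the unit ball boundedly into $A^{1,\rho}(\overline{\Omega'})$ for a fixed $\rho>0$, establish compactness of $A$, and conclude via Lemma~\ref{LemmaSingValuesOfASmoothCase} with $\sigma=1$. The only (minor) deviation is your compactness argument — you observe that $A$ factors boundedly through $A^{1,\rho}(\overline{\Omega'})$ which embeds compactly into $H^s(\Omega')$, whereas the paper instead invokes the kernel representation of $(-\Delta)^s$ and the disjointness of $\Omega'$ and $W$; your variant is arguably more economical since it reuses the bound just established rather than bringing in extra structure.
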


\begin{proof}
Recalling Remark \ref{RemarkInclusionGevreySpace}, we find by Proposition \ref{prop:UniformBounds} that $A(\widetilde{H}^s(W)) \subset A^{1,\rho}(\overline{\Omega'})$ for some $\rho>0$ and $B_1^{\widetilde{H}^s(W)} \subset A^{1,\rho}(\Omega')$ is bounded. By the disjointness of the domains $\Omega'$ and $W$ combined with, for instance, the kernel representation of the fractional Laplacian, the operator $A$ is compact. An application of Lemma \ref{LemmaSingValuesOfASmoothCase} then implies the statement.
\end{proof}

Now we are finally in the position to prove Theorem \ref{ThmInstability}.

\begin{proof}[Proof of Theorem \ref{ThmInstability}]
We begin by noting that zero is not a Dirichlet eigenvalue of $(-\Delta)^s$ and that, by assumption, for the potential $\bar{q}$ the condition (Aq) with radius $r_0 > 0$ is satisfied. For $\Omega' \Subset \Omega$ Lipschitz, we then define 
\begin{align*}
K:= \{ q \in L^{\frac{n}{2s}} (\Omega) : q \in \bar{q} + B_{r_0}^{W_0^{\delta,\frac{n}{2s}}(\Omega')}(0) \}.
\end{align*}
In this case the operator $\Gamma(q) := \Lambda_q - \Lambda_0$ is well-defined for every $q \in K$. It is known that the embedding $i: W^{\delta,\frac{n}{2s}}(\Omega) \to L^{\frac{n}{2s}}(\Omega)$ is compact with $e_k(i) \gtrsim k^{-\delta/n}$, see, for instance, \cite[Theorem in Section 3.5]{EdmundsTriebel1996} (noting that for the Besov spaces $B^s_{p,q}(\R^n)$ we have $B^s_{p,p}(\R^n) = W^{s,p}(\R^n)$ (c.f. \cite[Proposition 14.40]{Leoni2009})).\\
Now, let $q \in K$, $f \in \widetilde{H}^s(W)$, and let $u \in H^s(\R^n)$ be a solution to
\begin{align}\label{Eq1ProofMainThm}
\begin{cases}
\begin{alignedat}{2}
\left( (-\Delta)^s + q \right) u  &= 0 \qquad &&\text{in } \Omega,\\
u &= f \qquad &&\text{in } \Omega_e,
\end{alignedat}
\end{cases}
\end{align}
and $u_0 \in H^s(\R^n)$ a solution to \eqref{Eq1ProofMainThm} with $q=0$. Then, $w := u - u_0$ solves
\begin{align*}
\begin{cases}
\begin{alignedat}{2}
\left( (-\Delta)^s + q \right) w &= -q u_0 \qquad &&\text{in } \Omega,\\
w &= 0 \qquad &&\text{in } \Omega_e,
\end{alignedat}
\end{cases}
\end{align*}
and we have $\Gamma(q)f = (\Lambda_q - \Lambda_0)f = \restr{(-\Delta)^s w}{W}$. Reformulating this by means of the Caffarelli-Silvestre extension for $w$ as in Section \ref{ssec:prel-frlap}, we find that $\tilde{w}$ solves
\begin{equation*}
\begin{cases}
\begin{alignedat}{2}
\nabla \cdot x_{n+1}^{1-2s} \nabla \tilde{w} &= 0 \quad &&\mbox{in } \R^{n+1}_+,\\
c_s \lim_{x_{n+1} \to 0} x_{n+1}^{1-2s} \partial_{n+1} \tilde{w} - qw &= q u_0 \quad &&\mbox{in } \Omega,\\
\tilde{w} &= 0 \quad && \mbox{in } \Omega_e \times \{0\},
\end{alignedat}
\end{cases}
\end{equation*}
with $\left( -\Delta \right)^s w := -c_s \lim_{x_{n+1} \to 0} x_{n+1}^{1-2s} \partial_{n+1} \tilde{w}$. With this in hand we have the following sequence of inequalities
\begin{equation}\label{EqEstimateForComparisonOperator}
\begin{aligned}
\Vert (-\Delta)^s w &\Vert_{H^{-s}(W)} \lesssim \Vert (-\Delta)^s w \Vert_{H^{-s}(\R^n)} \lesssim \Vert \lim_{x_{n+1} \to 0} x_{n+1}^{1-2s} \partial_{n+1} \tilde{w} \Vert_{H^{-s}(\R^n)} \lesssim \Vert \tilde{w} \Vert_{\dot{H}^1(\R^{n+1}_+, x_{n+1}^{1-2s})}\\
&\lesssim \Vert q u_0 \Vert_{H^{-s}(\Omega)} \lesssim \Vert q u_0 \Vert_{L^{\frac{2n}{n+2s}}(\Omega)} = \Vert q u_0 \Vert_{L^{\frac{2n}{n+2s}}(\Omega')} \lesssim \Vert q \Vert_{L^{\frac{n}{2s}}(\Omega')} \Vert u_0 \Vert_{L^{\frac{2n}{n-2s}}(\Omega')}\\
&\lesssim \Vert q \Vert_{L^{\frac{n}{2s}}(\Omega')} \Vert u_0 \Vert_{H^s(\Omega')} \lesssim \Vert u_0 \Vert_{H^s(\Omega')},
\end{aligned}
\end{equation}
where the third inequality follows from the trace estimates and the fourth inequality from the well-posedness estimate from Section \ref{ssec:prel-frlap} together with the condition (Aq). Since, by Sobolev inequality, the embedding of $H^s(\Omega')$ into $L^{\frac{2n}{n-2s}}(\Omega')$ is continuous, the space $L^{\frac{2n}{n+2s}}(\Omega') = \left( L^{\frac{2n}{n-2s}}(\Omega') \right)^*$ can be continuously embedded into $H^{-s}(\Omega')$. This gives the fifth inequality. For the sixth equality we recall that $q$ was supported on $\Omega'$ and inequality seven is a consequence of Hölder's inequality. The second to last inequality is again due to the Sobolev inequality.

We next consider the linear operator
\begin{align*}
A: \widetilde{H}^s(W) \to H^s(\Omega'), \qquad f \mapsto \restr{u_0}{\Omega'},
\end{align*}
from Proposition \ref{prop:UniformBounds}, and obtain
\begin{align}\label{ProofThmInstabilityEq1}
\Vert \Gamma(q) f \Vert_{H^{-s}(W)} = \Vert (-\Delta)^s w \Vert_{H^{-s}(W)} \leq C \Vert u_0 \Vert_{H^s(\Omega')} = C \Vert Af \Vert_{H^s(\Omega')}
\end{align}
with the constant $C$ being uniform over $K$.\\
Since by Proposition \ref{prop:singular_value} the operator $A$ satisfies for some $\bar{c}>0$
\begin{align}\label{ProofFractLaplSingularValueEstimate}
\sigma_k(A) \lesssim \exp(-\bar{c}k^{\frac{1}{n}}),
\end{align}
by choosing $f = \varphi_k$ in \eqref{ProofThmInstabilityEq1} for $(\varphi_k)_{k\in\N} \subset \widetilde{H}^s(W)$ the singular value basis for $A$, which forms an orthonormal basis of $\widetilde{H}^s(W)$, we can conclude that
\begin{align*}
\Vert \Gamma(q) \varphi_k \Vert_{H^{-s}(W)} \lesssim \Vert A \varphi_k \Vert_{H^s(\Omega')} \lesssim \vert \sigma_k(A) \vert \lesssim \exp(-\bar{c}k^{\frac{1}{n}}).
\end{align*}
Since $\Gamma(q)$ is self-adjoint, the same bound holds for the adjoint operators $\Gamma(q)^*$. Since $\Lambda_{q_1} - \Lambda_{q_2} = \Gamma(q_1) - \Gamma(q_2)$, Theorem \ref{Thm5.14[KRS21]} applied for $F=\Gamma$, $K$ as above (recall also Remark \ref{RemarkPotentialsNotCenteredAt0}), $m'=\frac{\delta}{n}$, $\rho=\bar{c}$ and $\mu=\frac{1}{n}$ then yields the existence of some $c>0$ such that for any $0 < \varepsilon < r_0$ there are potentials $q_1, q_2 \in K$ with 
\begin{align*}
\Vert \Lambda_{q_1} - \Lambda_{q_2} \Vert_{\widetilde{H}^s(W) \to H^{-s}(W)} &\leq \exp\left(-c \varepsilon^{-\frac{1}{\delta(2+\frac{1}{n})}}\right),\\
\Vert q_1 - q_2 \Vert_{L^{\frac{n}{2s}}(\Omega)} &\geq \varepsilon.
\end{align*}
The second part of the statement then also follows from the general instability framework of Theorem \ref{Thm5.14[KRS21]}.
\end{proof}

\subsection{Regularity results for the Caffarelli-Silvestre extension}
\label{sec:reg}

In this subsection we deduce analytic regularization properties for the operator $(-\Delta)^s$ in tangential directions in the interior of $\Omega$, i.e., we provide the proof of Proposition \ref{prop:UniformBounds}. For alternative elliptic arguments leading to interior analyticity we refer to \cite{FMMS22, FMMS23} in polygons and polyhedra and to \cite{R16} for general regularity results for the fractional Laplacian and related operators.

\begin{proof}[Proof of Proposition \ref{prop:UniformBounds}]
We use that by \cite{ST10}, the function $u$ can be expressed in terms of the heat kernel $K_t(x,z)$ in the form
\begin{align}\label{eq:int_heat}
u(x)= \tilde{c}_s \int\limits_{0}^{\infty} \int\limits_{\R^n} K_t(x,z) g(z) dz \frac{dt}{t^{1-s}},
\end{align}
where $g(z):= (-\Delta)^s u \in \dot{H}^{-s}(\R^n)$ for $f\in \widetilde{H}^{s}(W)$. Indeed, this follows from noting that the map 
\begin{align*}
T: \dot{H}^{-s}(\R^n) \rightarrow \dot{H}^{s}(\R^n), \ g \mapsto \tilde{c}_s \int\limits_{0}^{\infty} \int\limits_{\R^n} K_t(x,z) g(z) dz \frac{dt}{t^{1-s}}
\end{align*}
is well-defined and bounded (first for $g\in C_c^{\infty}(\R^n)$ and then in general by density). Hence, for $g = (-\D)^s u$ we also obtain
\begin{align*}
\|T g\|_{\dot{H}^s(\R^n)} \leq C \|u\|_{H^s(\R^n)},
\end{align*}
and, for $u \in C_c^{\infty}(\R^n)$ and such a choice of $g$, a direct computation shows that $u = Tg$. By density this then also holds for $u \in H^s(\R^n)$, which provides a proof of the identity from \cite{ST10} in our setting.

We claim that $u(x)$ is analytic for $x\in \Omega'$. In order to observe this, we split the definition in \eqref{eq:int_heat} into two parts and set $u(x) = v_1(x)+ v_2(x)$ with
\begin{align}\label{eq:int_heat1}
v_1(x)= \int\limits_{1}^{\infty} \int\limits_{\R^n} K_t(x,z) g(z) dz \frac{dt}{t^{1-s}},
\end{align}
\begin{align}\label{eq:int_heat2}
v_2(x)= \int\limits_{0}^{1} \int\limits_{\R^n} K_t(x,z) g(z) dz \frac{dt}{t^{1-s}}.
\end{align}
Furthermore we define $w(x,t):= \int\limits_{\R^n} K_t(x,z) g(z) dz$ for $t\geq 0$.\\

\emph{Step 1: Bounds for $\partial_x^\alpha v_1$, $\vert\alpha\vert \geq 1$.}
For $v_1$, we note that the following analytic estimates hold (cf. inequality (1.7) in \cite{EMZ17} and references therein): There exist $C,R>0$ such that for all $x\in \Omega'$, $t\geq 1$
\begin{align*}
|\p_x^{\alpha} w(x,t)| \leq C R^{|\alpha|}|\alpha|! \left(t-\frac{1}{2}\right)^{-\frac{\vert\alpha\vert}{2}- \frac{n}{4}} \|w\|_{L^2(\R^n \times \{1/2\})}.
\end{align*}
As a consequence, exchanging integration and differentiation for which we invoke the dominated convergence theorem, we obtain for $\alpha\in\N^n$ with $\vert\alpha\vert \geq 1$
\begin{align}\label{eq:analy1}
\left\vert \partial_x^\alpha v_1 (x_0) \right\vert \leq \int\limits_{1}^{\infty} \left| \p_x^{\alpha} w(x_0,t) \right| \frac{dt}{t^{1-s}} \leq C_s R^{|\alpha|}|\alpha|! \|w\|_{L^2(\R^n \times \{1/2\})},
\end{align}
where we have used that $n\geq2$ and thus $1-s+\frac{\vert\alpha\vert}{2}+\frac{n}{4}>1$ to get the integrability in $t$. Hence, we infer the desired analyticity bounds for $\partial_x^\alpha v_1$ (in the case of $\vert\alpha\vert\geq1$). It remains to bound the quantity on the right hand side of \eqref{eq:analy1} in terms of $f$. To this end, we note that
\begin{align*}
w(x,1/2) 
&= \int\limits_{\R^n} K_{\frac{1}{2}}(x,z) g(z) dz 
=  \F^{-1}( e^{-\frac{|\xi|^2}{2}} \hat{g})(x) ,
\end{align*}
and thus, by the Parseval identity,
\begin{align*}
\|w\|_{L^2(\R^n \times \{1/2\})} \leq \|e^{-\frac{|\xi|^2}{2}} \hat{g}\|_{L^2(\R^n)}
\leq \|e^{-\frac{|\xi|^2}{2}}\|_{\dot{H}^s(\R^n)}\| \hat{g}\|_{\dot{H}^{-s}(\R^n)}
\leq C \| f\|_{H^{s}(W)}.
\end{align*}

\emph{Step 2: Bounds for $\partial_x^\alpha v_2$.}
We next turn to the analyticity of $v_2$. This does not immediately follow as above, since the heat kernel is not analytically regularizing in time and the radius of convergence decreases exponentially as $t\rightarrow 0$. However, we use that $g = 0$ in $\Omega$. As a consequence, in $\Omega$ the function $w(x,t)$ solves the equation
\begin{align*}
\begin{cases}
\begin{alignedat}{2}
(\p_t - \Delta) w &= 0 \quad &&\mbox{in } \Omega \times (0,\infty),\\
w & = 0 \quad &&\mbox{on } \Omega \times \{0\}.
\end{alignedat}
\end{cases}
\end{align*}
Hence, for $x_0 \in \Omega'$ the function $w$ satisfies the estimate
\begin{align*}
|\p_x^{\alpha} w(x_0,t)| \leq C \|w\|_{L^2(\Omega'' \times [0,1])} R^{|\alpha|} |\alpha|! \quad \mbox{ for } x_0 \in \Omega', t\in [0,1).
\end{align*}
for some $C = C(n, \Omega', \Omega'')$, some $R = R(n,\Omega',\Omega'')\in (0,1)$ and with $\Omega'' \subset \R^n$ open, such that $\Omega' \Subset \Omega'' \Subset \Omega$. This follows analogously as in \cite[Lemma 2.2]{EMZ17} (without weights, which for convenience of the reader, we formulate and prove again below in Lemma \ref{LemmaLInftyEstimateHeatEq}).

It remains to estimate $\|w\|_{L^2(\Omega'' \times [0,1])}$ in terms of the data $\|f\|_{H^s(W)}$. To this end, we let $\psi \in C^{\infty}(\R^n)$ with $\supp(\psi) \in B_{\delta/2}(0)^c$ and $\psi \equiv 1$ in $B_\delta(0)^c$ with $\delta \in (0,1)$ satisfying $\delta< \dist(\supp(g),\Omega'')$. Then, using that $g = 0$ in $\Omega$, we note that
\begin{equation}
\label{eq:aux_est}
\begin{aligned}
\|w\|_{L^2(\Omega'' \times [0,1])}
&= \| \int\limits_{\R^n} K_t(\cdot-z) g(z) dz \|_{L^2(\Omega'' \times (0,1))}
=  \| \int\limits_{\R^n} K_t(\cdot-z) \psi(\cdot-z) g(z) dz \|_{L^2(\R^n \times (0,1))}\\
&\leq \|g\|_{\dot{H}^{-s}(\R^n)} \|K_t(\cdot) \psi(\cdot) \|_{\dot{H}^s(\R^n) \times L^2 (0,1)}
 \leq C \|f\|_{H^s(W)} \|K_t(\cdot) \psi(\cdot) \|_{\dot{H}^s(\R^n) \times L^2 (0,1)}\\
& \leq C \|f\|_{H^s(W)}.
\end{aligned}
\end{equation}
Here we have used the fact that in $\supp(\psi)$ and for $t\in (0,1)$ there exists $c>0$ such that
\begin{align*}
K_t(x), |\nabla K_t(x)| \leq c t^{-n/2-1} e^{- c/t}, 
\end{align*}
which is non-singular as $t \rightarrow 0$. Also observe that, as above, the constant $C>1$ in \eqref{eq:aux_est} may change from line to line.

We insert this bound into the expression for $v_2$, and infer the bounds for $\alpha \in \N^n$
\begin{align*}
\left\vert \partial_x^\alpha v_2(x_0,t) \right\vert &\leq \int\limits_{0}^{1} \left| \p_x^{\alpha} w(x_0,t) \right| \frac{dt}{t^{1-s}} \leq C \Vert w \Vert_{L^2(\Omega'' \times [0,1])} R^{|\alpha|} |\alpha|! \left| \int\limits_{0}^{1} \frac{dt}{t^{1-s}} \right|\\
&\leq C \Vert f \Vert_{H^s(W)} R^{|\alpha|} |\alpha|!.
\end{align*}

\emph{Step 3: Bounds for $u$.}
Combining the above estimates for $v_1$ and $v_2$ implies that $u$ satisfies for $\vert\alpha\vert \geq 1$
\begin{align}\label{Eq1AnalyticityStep3}
\Vert \partial_x^\alpha u \Vert_{L^\infty(\Omega')} \leq \widetilde{C} \Vert f \Vert_{H^s(W)} \widetilde{R}^{\vert\alpha\vert} \vert\alpha\vert^{\vert\alpha\vert}
\end{align}
when $f \in \widetilde{H}^{s}(W)$. By the well-posedness of \eqref{EqPropAnalyticityFractCalderon} we additionally know that $\Vert u \Vert_{L^2(\Omega')} \leq \Vert u \Vert_{H^s(\R^n)} \leq C \Vert f \Vert_{H^s(W)}$. We then deduce by Sobolev embedding that
\begin{equation}\label{Eq2AnalyticityStep3}
\begin{aligned}
\Vert u \Vert_{L^\infty(\Omega')} &\leq C \Vert u \Vert_{H^{n}(\Omega')} \leq \Vert u \Vert_{L^2(\Omega')} + \operatorname{Vol}(\Omega')^{1/2} \sum_{j=1}^n \Vert D_x^j u \Vert_{L^\infty(\Omega')} \leq C n \widetilde{C} \Vert f \Vert_{H^s(W)} \widetilde{R}^n n^n\\
&\leq C \Vert f \Vert_{H^s(W)}.
\end{aligned}
\end{equation}
Thus, combining \eqref{Eq1AnalyticityStep3} and \eqref{Eq2AnalyticityStep3}, we have
\begin{align*}
\Vert D^\tau u \Vert_{L^\infty(\Omega')} \leq C \|f \|_{H^s(W)} R^{\vert\tau\vert} \vert\tau\vert^{\vert\tau\vert} \quad \mbox{for all } \tau\in\N_0^n.
\end{align*}
\end{proof}

\section{Instability for the variable coefficient fractional Calderón problem}\label{sec:low_reg}

In this section, we present the proof of Theorem \ref{thm:low_reg_instab}. To this end, we recall some notation. For $\Omega \subset \R^n$ open, $R>0$ we denote $Q_{\Omega,R} := \Omega \times (0,R) \subset \R^{n+1}_+$ and $\Gamma_{\Omega,R} := \Omega \times [0,R) \subset \overline{\R^{n+1}_+}$. The spaces $H_0^1(\Gamma_{\Omega,R}, x_{n+1}^{1-2s})$ and $\dot{H}_0^1(\Gamma_{\Omega,R}, x_{n+1}^{1-2s})$ were defined by
\begin{align*}
H_0^1(\Gamma_{\Omega,R},x_{n+1}^{1-2s}) := \{ \tilde{u} \in H^1(Q_{\Omega,R}, x_{n+1}^{1-2s}): \tilde{u} = 0 \text{ on } \left( \partial\Omega \times(0,R) \right) \cup \left( \Omega \times \{R\} \right) \},
\end{align*}
and
\begin{align*}
\dot{H}_0^1(\Gamma_{\Omega,R},x_{n+1}^{1-2s}) := \{ \tilde{u} \in \dot{H}^1(Q_{\Omega,R}, x_{n+1}^{1-2s}): \tilde{u} = 0 \text{ on } \left( \partial\Omega \times(0,R) \right) \cup \left( \Omega \times \{R\} \right) \}.
\end{align*}

\subsection{Singular value and entropy estimates for embeddings adapted to the (variable coefficient) Caffarelli-Silvestre extension}
\label{sec:singular}

Before turning to the proof of Theorem \ref{thm:low_reg_instab}, in this section, we deduce singular value bounds adapted to embeddings associated with the variable coefficient fractional Caffarelli-Silvestre extension operator. This relies on a characterization of the relevant spaces in terms of sequence spaces based on eigenfunction decompositions of and Weyl type bounds for the Caffarelli-Silvestre extension.

Hence, we first recall the eigenvalue characterization for a variant of a variable coefficient Caffarelli-Silvestre operator.
Indeed, the following proposition follows exactly like Proposition 2.1 in \cite{FelliFerrero2020} with the obvious modifications.

\begin{proposition}\label{PropEigenfunctionForCSExtension}
Let $s\in(0,1)$, $R>0$ and let $\Omega \subset \R^n$ be open, non-empty, bounded and Lipschitz. Define 
\begin{align*}
\tilde{e}_{l,m}(x,x_{n+1}) := \gamma_m x_{n+1}^s J_{-s} \left( \frac{j_{-s,m}}{R}x_{n+1} \right) e_l(x) \quad \text{for any } l,m \in \N_{>0}
\end{align*}
and
\begin{align*}
\lambda_{l,m} := \mu_l + \frac{j_{-s,m}^2}{R^2} \quad \text{for any } l,m \in \N_{>0},
\end{align*}
where $J_{-s}$ is the Bessel function of first kind with order $-s$, $0<j_{-s,1}<j_{-s,2}<\dots<j_{-s,m}<\cdots$ are the zeros of $J_{-s}$, $\gamma_m := \left(\int_0^R x_{n+1} \left[J_{-s}(\frac{j_{-s,m}}{R}x_{n+1})\right]^2 dx_{n+1} \right)^{-1/2}$ is a normalizing constant, $\{e_l\}_{l\geq1}$ denotes a complete orthonormal system of eigenfunctions of $\left( -\Delta \right)$ in $\Omega$ with homogeneous Dirichlet boundary data and $\mu_1<\mu_2\leq\dots\leq\mu_l\leq\cdots$ the corresponding eigenvalues.\\
Then for any $l,m \in \N_{>0}$, $\tilde e_{l,m}$ is an eigenfunction of
\begin{equation}\label{EqDefiningEigenfunction}
\begin{cases}
\begin{alignedat}{2}
-\nabla \cdot x_{n+1}^{1-2s} \nabla \tilde{e}_{l,m} &= \lambda_{l,m} \tilde{e}_{l,m} \quad &&\text{in } Q_{\Omega,R},\\
\tilde{e}_{l,m} &= 0 \quad &&\text{on } \left( \partial\Omega \times (0,R) \right) \cup \left( \Omega \times \{R\} \right),\\
\lim_{x_{n+1}\to0} x_{n+1}^{1-2s} \partial_{n+1} \tilde{e}_{l,m} &= 0 \quad &&\text{on } \Omega,
\end{alignedat}
\end{cases}
\end{equation}
with corresponding eigenvalues $\lambda_{l,m}$ (interpreted in a weak sense). Moreover, the set $\{\tilde{e}_{l,m}: l,m \in \N_{>0}\}$ is a complete orthonormal system for $L^2(Q_{\Omega,R},x_{n+1}^{1-2s})$.
\end{proposition}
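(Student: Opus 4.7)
The proof is a standard separation of variables argument, adapted to the degenerate/singular weight $x_{n+1}^{1-2s}$ and following the template of \cite{FelliFerrero2020}. The plan is to construct the eigenfunctions explicitly as products of eigenfunctions in the $x$ and $x_{n+1}$ variables, and then to invoke the tensor-product structure of $L^2(Q_{\Omega,R}, x_{n+1}^{1-2s})$ to obtain completeness.

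First, I would search for eigenfunctions of the form $\tilde{e}(x,x_{n+1}) = \phi(x)\psi(x_{n+1})$. Substituting into \eqref{EqDefiningEigenfunction} and using that $-\nabla \cdot x_{n+1}^{1-2s} \nabla \tilde{e} = -x_{n+1}^{1-2s}\Delta_x \phi \cdot \psi - \phi \cdot \partial_{n+1}(x_{n+1}^{1-2s}\partial_{n+1}\psi)$, one sees that the equation (interpreted with the natural $x_{n+1}^{1-2s}$ weight on the right-hand side, as is standard for the Caffarelli-Silvestre extension) separates into the Dirichlet Laplacian eigenvalue problem on $\Omega$, yielding $\phi = e_l$ with $-\Delta e_l = \mu_l e_l$, and the one-dimensional singular Sturm-Liouville problem
\begin{equation*}
-\bigl(x_{n+1}^{1-2s}\psi'\bigr)' = \nu \, x_{n+1}^{1-2s} \psi \quad \text{on } (0,R), \qquad \psi(R)=0, \qquad \lim_{x_{n+1}\to 0} x_{n+1}^{1-2s}\psi'(x_{n+1}) = 0,
\end{equation*}
with $\lambda = \mu_l + \nu$. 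The boundary condition at $x_{n+1}=R$ is inherited from the lateral Dirichlet condition, while the weighted Neumann condition at $x_{n+1}=0$ encodes the vanishing of the co-normal trace.

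Second, I would solve this singular ODE by reducing it to a Bessel equation. The substitution $\psi(t) = t^{s}\varphi(\sqrt{\nu}\,t)$ shows that $\varphi$ satisfies Bessel's equation of order $-s$, so that the admissible family of solutions is spanned by $\psi(t) = t^{s} J_{-s}(\sqrt{\nu}\, t)$ (the other fundamental solution $t^{s} J_{s}(\sqrt{\nu}\, t)$ is excluded by the weighted Neumann condition at $0$, as can be checked from the asymptotic expansions $J_{-s}(r) \sim (r/2)^{-s}/\Gamma(1-s)$ and $J_{s}(r) \sim (r/2)^{s}/\Gamma(1+s)$ near $r=0$, which yield $x_{n+1}^{1-2s}\psi'(x_{n+1}) \to 0$ only in the $J_{-s}$ case). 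The Dirichlet condition $\psi(R) = 0$ then forces $\sqrt{\nu} R$ to be a positive zero of $J_{-s}$, i.e., $\nu = j_{-s,m}^2/R^2$ for some $m \geq 1$, giving $\lambda_{l,m} = \mu_l + j_{-s,m}^2/R^2$. The normalization constant $\gamma_m$ is then chosen precisely so that $\|\psi_m\|_{L^2((0,R), x_{n+1}^{1-2s})} = 1$; a direct computation with $\psi_m(x_{n+1}) = \gamma_m x_{n+1}^s J_{-s}(\tfrac{j_{-s,m}}{R} x_{n+1})$ and the weight $x_{n+1}^{1-2s}$ gives exactly the formula in the statement.

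Third, for completeness of $\{\tilde{e}_{l,m}\}_{l,m\geq 1}$ in $L^2(Q_{\Omega,R}, x_{n+1}^{1-2s})$, I would use the tensor product identification $L^2(Q_{\Omega,R}, x_{n+1}^{1-2s}) \cong L^2(\Omega) \otimes L^2((0,R), x_{n+1}^{1-2s})$. Since $\{e_l\}_{l\geq 1}$ is complete in $L^2(\Omega)$ by classical spectral theory for the Dirichlet Laplacian, and since $\{\psi_m\}_{m\geq 1}$ is a complete orthonormal system in $L^2((0,R), x_{n+1}^{1-2s})$ by standard Fourier–Bessel expansion theory (which applies to this singular but self-adjoint Sturm-Liouville problem with compact resolvent in the weighted space), the family of tensor products $\{e_l \otimes \psi_m\}_{l,m\geq1}$ forms a complete orthonormal system. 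Orthonormality of the $\tilde e_{l,m}$ then follows from the orthonormality of the two factor families in their respective weighted spaces.

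The main technical point that requires care is the verification of the weighted Neumann condition at $x_{n+1}=0$ via the Bessel asymptotics, together with the rigorous weak interpretation of \eqref{EqDefiningEigenfunction}; the rest of the argument is essentially bookkeeping based on classical Sturm-Liouville and Fourier-Bessel theory, and goes through verbatim as in \cite[Proposition 2.1]{FelliFerrero2020}.
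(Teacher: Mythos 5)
Your proposal takes the same route the paper does: the paper offers no self-contained proof, stating only that the result ``follows exactly like Proposition 2.1 in \cite{FelliFerrero2020} with the obvious modifications,'' and your write-up is a faithful reconstruction of that separation-of-variables argument (tensorizing the Dirichlet Laplacian spectrum on $\Omega$ with the singular Sturm--Liouville/Bessel problem in $x_{n+1}$, selecting $J_{-s}$ over $J_s$ via the weighted Neumann trace, and invoking Fourier--Bessel completeness for the one-dimensional factor). You also correctly note that the first equation in \eqref{EqDefiningEigenfunction} must carry the weight $x_{n+1}^{1-2s}$ on the right-hand side for the weak formulation to produce orthonormality in $L^2(Q_{\Omega,R},x_{n+1}^{1-2s})$, which is consistent with how the proposition is actually used in Proposition \ref{PropNormEquivalenceCSExtension}.
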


Building on the characterization of the spectrum of the Caffarelli-Silvestre operator, we derive a similar result as Proposition 2.7 in \cite{KRS21} which allows us to identify the relevant weighted Sobolev spaces with corresponding weighted sequence spaces. 

To this end, we relabel the eigenvalues and eigenfunctions from above and define these as  $(\lambda_k, \tilde{\varphi}_k) \subset \left( \R_{> 0} \times L^2(Q_{\Omega,R},x_{n+1}^{1-2s}) \right)$ to denote the ordered pairs of eigenvalues and eigenfunctions $(\lambda_{l,m}, \tilde{e}_{l,m})$ from Proposition \ref{PropEigenfunctionForCSExtension} such that $0 < \lambda_1 \leq \lambda_2 \leq \dots \leq \lambda_k \leq \cdots$. 
We note that $\tilde{\varphi}_k \in H_0^1(\Gamma_{\Omega,R},x_{n+1}^{1-2s})$ and that by their defining equation the functions $\tilde{\varphi}_k$ are also $H_0^1(\Gamma_{\Omega,R},x_{n+1}^{1-2s})$ orthogonal. Hence, defining the functions $\tilde{\varphi}_k \in H_0^1(\Gamma_{\Omega,R},x_{n+1}^{1-2s})$ as $L^2(Q_{\Omega,R},x_{n+1}^{1-2s})$ normalized, ordered eigenfunctions, we obtain an orthonormal basis $(\tilde{\varphi}_k)_{k \in \N}$ of $L^2(Q_{\Omega,R},x_{n+1}^{1-2s})$ which is also orthogonal with respect to the $\dot{H}_0^1(\Gamma_{\Omega,R},x_{n+1}^{1-2s})$ inner product. In what follows below, we will work with this basis.

\begin{proposition}\label{PropNormEquivalenceCSExtension}
Let $\Omega$ be open, non-empty, bounded and Lipschitz, and let $R>0$. Let $(\lambda_k, \tilde{\varphi}_k) \subset \left( \R_{> 0} \times H^1_0(\Gamma_{\Omega,R},x_{n+1}^{1-2s}) \right)$ be as above. Then for any $\tilde{u} \in H_0^1(\Gamma_{\Omega,R},x_{n+1}^{1-2s})$
\begin{align*}
\Vert \tilde{u} \Vert_{H^1(\Gamma_{\Omega,R},x_{n+1}^{1-2s})}^2 \sim \sum_{k=1}^\infty k^{\frac{2}{n+1}} \vert (\tilde{u},\tilde{\varphi}_k)_{L^2(Q_{\Omega,R},x_{n+1}^{1-2s})} \vert^2.
\end{align*}
Moreover, 
\begin{align*}
H_0^1(\Gamma_{\Omega,R},x_{n+1}^{1-2s}) = \left\{ \tilde{u} \in L^2(Q_{\Omega,R},x_{n+1}^{1-2s}): \ \sum_{k=1}^\infty k^{\frac{2}{n+1}} \vert (\tilde{u},\tilde{\varphi}_k)_{L^2(Q_{\Omega,R},x_{n+1}^{1-2s})} \vert^2 < \infty \right\}.
\end{align*}
In particular, there exists an isomorphism between $H_0^1(\Gamma_{\Omega,R},x_{n+1}^{1-2s})$ and the sequence space
\begin{align*}
h^1:= \left\{ (a_j)_{j \in \N} \in \ell^2: \ \sum\limits_{j=1}^{\infty} j^{\frac{2}{n+1}} |a_j|^2 < \infty \right\},
\end{align*}
given by $ \tilde{u} \mapsto ( (\tilde{u},\tilde{\varphi}_k)_{L^2(Q_{\Omega,R},x_{n+1}^{1-2s})})_{k \in \N}$.
\end{proposition}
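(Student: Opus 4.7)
The strategy is to diagonalize the problem by expanding in the eigenfunction basis from Proposition \ref{PropEigenfunctionForCSExtension} and then to reduce the norm equivalence to a Weyl-type asymptotic $\lambda_k \sim k^{2/(n+1)}$ for the relabeled eigenvalues. Once such an asymptotic is available, the desired sequence space identification follows by a routine argument.

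I would begin by writing $\tilde u = \sum_k c_k \tilde\varphi_k$ for $\tilde u \in H^1_0(\Gamma_{\Omega,R}, x_{n+1}^{1-2s})$ with $c_k := (\tilde u,\tilde\varphi_k)_{L^2(Q_{\Omega,R},x_{n+1}^{1-2s})}$. The $L^2$-orthonormality of $(\tilde\varphi_k)_k$ immediately gives $\|\tilde u\|_{L^2(Q_{\Omega,R},x_{n+1}^{1-2s})}^2 = \sum_k |c_k|^2$. Testing the weak form of \eqref{EqDefiningEigenfunction} against $\tilde\varphi_l$ (and using the third boundary condition in \eqref{EqDefiningEigenfunction} to drop the boundary term at $\{x_{n+1}=0\}$) yields the $\dot H^1_0$-orthogonality relation $\int_{Q_{\Omega,R}} x_{n+1}^{1-2s} \nabla\tilde\varphi_k\cdot\nabla\tilde\varphi_l \, d(x,x_{n+1}) = \lambda_k\delta_{kl}$, and hence $\|\tilde u\|_{\dot H^1(Q_{\Omega,R},x_{n+1}^{1-2s})}^2 = \sum_k \lambda_k |c_k|^2$. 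Adding these two identities produces $\|\tilde u\|_{H^1(Q_{\Omega,R},x_{n+1}^{1-2s})}^2 = \sum_k(1+\lambda_k)|c_k|^2$, which reduces the proposition to showing $1+\lambda_k \sim k^{2/(n+1)}$.

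The main obstacle — and the core of the argument — is this Weyl-type asymptotic. Using the explicit representation $\lambda_{l,m} = \mu_l + j_{-s,m}^2/R^2$, I would invoke Weyl's law for the Dirichlet Laplacian on the bounded Lipschitz domain $\Omega \subset \R^n$ in its one-term form $\mu_l \sim l^{2/n}$, together with the classical McMahon-type asymptotics $j_{-s,m} \sim \pi m$ for zeros of Bessel functions. Counting the pairs $(l,m)$ with $\lambda_{l,m} \leq t$, the number is
\begin{align*}
N(t) = \#\{(l,m) : \mu_l + j_{-s,m}^2/R^2 \leq t \} \sim \sum_{m:\, j_{-s,m}^2/R^2 \leq t} (t - j_{-s,m}^2/R^2)^{n/2} \sim t^{(n+1)/2},
\end{align*}
and upon passing to the increasing relabeling $(\lambda_{l,m}) \mapsto (\lambda_k)$ this inverts to $\lambda_k \sim k^{2/(n+1)}$. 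Since $\lambda_1 > 0$, one has $1+\lambda_k \sim k^{2/(n+1)}$ uniformly in $k \geq 1$.

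Combining these pieces gives the equivalence $\|\tilde u\|_{H^1(Q_{\Omega,R}, x_{n+1}^{1-2s})}^2 \sim \sum_k k^{2/(n+1)} |c_k|^2$. The sequence-space identification is then obtained in the standard way: the map $\tilde u \mapsto (c_k)_k$ sends $H^1_0(\Gamma_{\Omega,R},x_{n+1}^{1-2s})$ into $h^1$ by the forward direction of the equivalence, and conversely for $(a_k) \in h^1$ the partial sums $\sum_{k \leq N} a_k \tilde\varphi_k$ are Cauchy in $H^1(Q_{\Omega,R},x_{n+1}^{1-2s})$ by the reverse direction, and their limit lies in $H^1_0(\Gamma_{\Omega,R},x_{n+1}^{1-2s})$ because this space is closed under $H^1$-convergence and all partial sums satisfy the vanishing trace condition on $(\partial\Omega \times (0,R)) \cup (\Omega \times \{R\})$.
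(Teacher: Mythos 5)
Your proposal is correct and follows essentially the same route as the paper's proof: expand in the eigenfunction basis, derive the $L^2$- and $\dot H^1$-orthogonality relations from Proposition \ref{PropEigenfunctionForCSExtension}, establish the Weyl-type asymptotic $\lambda_k\sim k^{2/(n+1)}$ by the same lattice-point count based on $\mu_l\sim l^{2/n}$ and McMahon's expansion for Bessel zeros, and conclude via a Cauchy-sequence argument for the sequence-space isomorphism. The only cosmetic difference is that the paper reduces to $\dot H^1_0 = H^1_0$ via the zero boundary conditions (Poincar\'e) while you carry the $(1+\lambda_k)$ weight and use $\lambda_1>0$; these are equivalent.
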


\begin{proof}
\emph{Step 1: Weyl type asymptotics of the eigenvalues.}
We begin by deducing the asymptotic behaviour of the eigenvalues $\lambda_k$. We recall that $\lambda_{l,m}$ was given by
\begin{align*}
\lambda_{l,m} := \mu_l + \frac{j_{-s,m}^2}{R^2} \quad \text{for any } l,m \in \N_{>0}.
\end{align*}
By Weyl asymptotics (cf. \cite[Chapter 8 Corollary 3.5]{Taylor2010}), we know that $\mu_l \sim l^{2/n}$ and by McMahon's asymptotic expansion (cf. 10.21 (vi) in \cite{Olver10}) we also know that $j_{-s,m}^2 \sim m^2$, which implies that
\begin{align}\label{EqAsympLambdaLM}
\lambda_{l,m} \sim l^{2/n} + m^2.
\end{align}
Fix some $N \in \N_{>0}$ sufficiently large. We seek to estimate the number of pairs $(l,m)$ such that $l^{2/n}+m^2 \leq N$. For fixed $m \leq \sqrt{N}$ there are $\lfloor (N-m^2)^{n/2}\rfloor$ elements $l$ such that the condition is satisfied. Summing over $m$ we find
\begin{align*}
\sum_{m=1}^{\lfloor \sqrt{N} \rfloor} \lfloor (N-m^2)^{n/2} \rfloor \leq \sum_{m=1}^{\lfloor \sqrt{N} \rfloor} N^{n/2} \leq N^{\frac{n+1}{2}},
\end{align*}
and
\begin{align*}
\sum_{m=1}^{\lfloor \sqrt{N} \rfloor}\lfloor (N-m^2)^{n/2}\rfloor &\geq \sum_{m=1}^{\lfloor \sqrt{N}/2 \rfloor} \lfloor(N-m^2)^{n/2}\rfloor \geq \sum_{m=1}^{\lfloor \sqrt{N}/2 \rfloor} (N-\frac{1}{4}N)^{n/2} = \sum_{m=1}^{\lfloor \sqrt{N}/2 \rfloor} (\frac{3}{4}N)^{n/2}\\
&\geq \frac{1}{3} \left(\frac{3}{4}\right)^{n/2} N^{\frac{n+1}{2}}.
\end{align*}
Hence, there are approximately $N^{\frac{n+1}{2}}$ pairs $(l,m)$ such that $l^{2/n}+m^2 \leq N$, which implies together with \eqref{EqAsympLambdaLM} that
\begin{align*}
\lambda_k \sim k^{\frac{2}{n+1}}.
\end{align*}

\emph{Step 2: Characterization as a sequence space.}
We claim that as a consequence of Proposition \ref{PropEigenfunctionForCSExtension} we obtain for $\tilde{u} \in H_0^1(\Gamma_{\Omega,R},x_{n+1}^{1-2s})$ the norm equivalence
\begin{align}\label{EqNormEquivalence}
\Vert \tilde{u} \Vert_{H^1(\Gamma_{\Omega,R},x_{n+1}^{1-2s})}^2 \sim \sum_{k=1}^\infty k^{\frac{2}{n+1}} \vert (\tilde{u},\tilde{\varphi}_k)_{L^2(Q_{\Omega,R},x_{n+1}^{1-2s})} \vert^2.
\end{align}
Indeed, firstly, by the zero boundary conditions, we have that $H_0^1(\Gamma_{\Omega,R},x_{n+1}^{1-2s}) = \dot{H}_0^1(\Gamma_{\Omega,R},x_{n+1}^{1-2s})$. 
Next, if $\tilde{u} \in H_0^1(\Gamma_{\Omega,R},x_{n+1}^{1-2s})$, we obtain that 
\begin{align*}
\lambda_k (\tilde{u},\tilde{\varphi}_k)_{L^2(Q_{\Omega,R},x_{n+1}^{1-2s})} = (\nabla \tilde{u},\nabla \tilde{\varphi}_k)_{L^2(Q_{\Omega,R},x_{n+1}^{1-2s})}.
\end{align*}
Due to the $L^2(Q_{\Omega,R},x_{n+1}^{1-2s})$-orthonormality of the functions $\tilde{\varphi}_k$, the norm equivalence follows. Moreover, together with the singular value asymptotics from step 1, this also shows that any element of $H_0^1(\Gamma_{\Omega,R},x_{n+1}^{1-2s})$ can be identified with an element in 
\begin{align*}
X:=\left\{ \tilde{u} \in L^2(Q_{\Omega,R},x_{n+1}^{1-2s}): \ \sum_{k=1}^\infty k^{\frac{2}{n+1}} \vert (\tilde{u},\tilde{\varphi}_k)_{L^2(Q_{\Omega,R},x_{n+1}^{1-2s})} \vert^2 < \infty \right\}.
\end{align*}
It remains to prove that if $\tilde{u} \in X$, then also $\tilde{u}$ in $H_0^1(\Gamma_{\Omega,R},x_{n+1}^{1-2s}) $. This follows from the fact that since $\tilde{\varphi}_k \in H_0^1(\Gamma_{\Omega,R},x_{n+1}^{1-2s})$ for any $N \in \N$ also
\begin{align*}
\tilde{u}_N := \sum_{k=1}^N k^{\frac{2}{n+1}}  (\tilde{u},\tilde{\varphi}_k)_{L^2(Q_{\Omega,R},x_{n+1}^{1-2s})}  \tilde{\varphi}_k \in H_0^1(\Gamma_{\Omega,R},x_{n+1}^{1-2s}),
\end{align*}
and, by the norm equivalence from above and since $\tilde{u} \in X$, for $N\geq M$
\begin{align*}
\Vert \tilde{u}_N - \tilde{u}_M \Vert_{H^1(\Gamma_{\Omega,R},x_{n+1}^{1-2s})}^2 \sim \sum_{k=M}^N k^{\frac{2}{n+1}} \vert (\tilde{u},\tilde{\varphi}_k)_{L^2(Q_{\Omega,R},x_{n+1}^{1-2s})} \vert^2 \rightarrow 0 \mbox{ as } M,N \rightarrow \infty.
\end{align*}
Therefore, $(\tilde{u}_N)_{N\in \N}$ forms a Cauchy sequence in $H_0^1(\Gamma_{\Omega,R},x_{n+1}^{1-2s})$ and thus by the completeness of $H_0^1(\Gamma_{\Omega,R},x_{n+1}^{1-2s})$ the limit $N \rightarrow \infty$ exists and is an element of $H_0^1(\Gamma_{\Omega,R},x_{n+1}^{1-2s})$.

Last but not least, the identification with the sequence space $h^{1}$ follows directly from the characterization as the function space $X$ from above.
\end{proof}

Using the above characterization in terms of sequence spaces, we obtain two-sided singular value estimates for the embedding $i: H_0^1(\Gamma_{\Omega,R}, x_{n+1}^{1-2s}) \to L^2(Q_{\Omega,R}, x_{n+1}^{1-2s})$. 

\begin{proposition}\label{PropEstimateSingularValuesZeroBoundaryData}
Let $\Omega$ be open, non-empty, bounded and Lipschitz, and let $R>0$. The embedding $i: H_0^1(\Gamma_{\Omega,R}, x_{n+1}^{1-2s}) \to L^2(Q_{\Omega,R}, x_{n+1}^{1-2s})$ satisfies the singular value bounds
\begin{align*}
\sigma_k(i) \sim k^{-\frac{1}{n+1}}.
\end{align*}
\end{proposition}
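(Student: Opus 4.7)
The plan is to reduce the computation of singular values to an explicit diagonal operator on sequence spaces via the spectral decomposition from Propositions \ref{PropEigenfunctionForCSExtension} and \ref{PropNormEquivalenceCSExtension}. Concretely, the eigenfunctions $(\tilde{\varphi}_k)_{k \in \mathbb{N}}$ form an orthonormal basis of $L^2(Q_{\Omega,R}, x_{n+1}^{1-2s})$, and because $\tilde{\varphi}_k \in H_0^1(\Gamma_{\Omega,R}, x_{n+1}^{1-2s})$ solves the eigenvalue problem \eqref{EqDefiningEigenfunction}, testing with $\tilde{\varphi}_j$ yields
\begin{align*}
(\nabla \tilde{\varphi}_k, \nabla \tilde{\varphi}_j)_{L^2(Q_{\Omega,R}, x_{n+1}^{1-2s})} = \lambda_k \delta_{kj},
\end{align*}
so the $\tilde{\varphi}_k$ are mutually orthogonal in $\dot{H}^1_0 = H^1_0$ as well, with $\|\tilde{\varphi}_k\|_{H^1(\Gamma_{\Omega,R}, x_{n+1}^{1-2s})}^2 \sim \lambda_k \sim k^{2/(n+1)}$ by the Weyl-type asymptotics established in Step 1 of the proof of Proposition \ref{PropNormEquivalenceCSExtension}.

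Next, I would set $\tilde{\psi}_k := \tilde{\varphi}_k/\|\tilde{\varphi}_k\|_{H^1(\Gamma_{\Omega,R}, x_{n+1}^{1-2s})}$, so that up to a universal equivalence constant $(\tilde{\psi}_k)_{k \in \mathbb{N}}$ is a Riesz basis of $H_0^1(\Gamma_{\Omega,R}, x_{n+1}^{1-2s})$ (in fact orthonormal with respect to an equivalent inner product, by the norm equivalence of Proposition \ref{PropNormEquivalenceCSExtension}). In these bases the embedding acts diagonally as
\begin{align*}
i(\tilde{\psi}_k) = \|\tilde{\varphi}_k\|_{H^1(\Gamma_{\Omega,R}, x_{n+1}^{1-2s})}^{-1} \, \tilde{\varphi}_k,
\end{align*}
so $i$ is unitarily equivalent (up to bounded isomorphisms with explicit constants) to the diagonal sequence operator $(a_k) \mapsto (\lambda_k^{-1/2} a_k)$ on $\ell^2$.

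The singular values of a diagonal operator between separable Hilbert spaces are the decreasing rearrangement of the absolute values of the diagonal entries. Since $k \mapsto \lambda_k^{-1/2} \sim k^{-1/(n+1)}$ is already decreasing, combining with the two-sided comparison from the norm equivalence yields
\begin{align*}
\sigma_k(i) \sim \lambda_k^{-1/2} \sim k^{-\frac{1}{n+1}}.
\end{align*}

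I expect no serious obstacle here: the main work (the spectral decomposition and the Weyl asymptotics $\lambda_k \sim k^{2/(n+1)}$) has already been carried out in Propositions \ref{PropEigenfunctionForCSExtension} and \ref{PropNormEquivalenceCSExtension}. The only subtlety is to handle the equivalence (rather than equality) of the $H^1$ norm with the sequence-space norm; this is done either by passing to the equivalent inner product $(\tilde u,\tilde v) \mapsto \sum_k \lambda_k (\tilde u,\tilde\varphi_k)_{L^2(Q_{\Omega,R},x_{n+1}^{1-2s})} (\tilde v,\tilde\varphi_k)_{L^2(Q_{\Omega,R},x_{n+1}^{1-2s})}$ on $H_0^1$, which does not change singular values up to multiplicative constants, or directly using the min–max characterization of singular values via the sequence-space description to obtain matching upper and lower bounds.
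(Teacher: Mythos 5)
Your proposal is correct and takes essentially the same route as the paper: both diagonalize the embedding in the $L^2$-orthonormal (and $H^1$-orthogonal) eigenbasis from Proposition \ref{PropEigenfunctionForCSExtension}, then read off the singular values from the Weyl-type asymptotics $\lambda_k \sim k^{2/(n+1)}$ established in Proposition \ref{PropNormEquivalenceCSExtension}; the paper phrases this through the operator $J^b$ and the identity $i^*i = J^{-2}$, while you work directly with the normalized eigenfunctions $\tilde\psi_k$, but these are the same computation.
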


\begin{proof}
Let $\tilde{u} \in H_0^1(\Gamma_{\Omega,R},x_{n+1}^{1-2s})$, $(\tilde{\varphi}_j)_{j\in\N_{>0}}$ the orthonormal system as in Proposition \ref{PropNormEquivalenceCSExtension} and $b\in\Z$. Define $J^b$ by
\begin{align*}
J^b\tilde{u} := \sum_{j=1}^\infty j^{\frac{b}{n+1}} ( \tilde{u},\tilde{\varphi}_j )_{L^2(Q_{\Omega,R},x_{n+1}^{1-2s})} \tilde{\varphi}_j.
\end{align*}
Then, by Proposition \ref{PropNormEquivalenceCSExtension}, we have $\Vert \tilde{u} \Vert_{H^1(\Gamma_{\Omega,R},x_{n+1}^{1-2s})} \sim \Vert J^1 \tilde{u} \Vert_{L^2(Q_{\Omega,R},x_{n+1}^{1-2s})}$ and $H_0^1(\Gamma_{\Omega,R},x_{n+1}^{1-2s})$ and $L^2(\Gamma_{\Omega,R},x_{n+1}^{1-2s})$ are isomorphic to the sequence spaces $h^1$ and $\ell^2$ with the isomorphism given by $u \mapsto ((u, \tilde{\varphi}_j)_{L^2(Q_{\Omega,R}, x_{n+1}^{1-2s})})_{j \in \N}$. We note that for all $\tilde{u}, \tilde{v} \in H_0^1(\Gamma_{\Omega,R},x_{n+1}^{1-2s})$ it holds
\begin{align*}
( i^* i(\tilde{u}), \tilde{v} )_{H^1(\Gamma_{\Omega,R},x_{n+1}^{1-2s})} = ( i(\tilde{u}), i(\tilde{v}) )_{L^2(Q_{\Omega,R},x_{n+1}^{1-2s})} = ( J^{-2} \tilde{u}, \tilde{v} )_{H^1(\Gamma_{\Omega,R},x_{n+1}^{1-2s})}.
\end{align*}
Thus, $i^*i = J^{-2}$ on $H_0^1(\Gamma_{\Omega,R},x_{n+1}^{1-2s})$, and therefore we have
\begin{align*}
\sigma_k(i)^2 = \sigma_k(i^* i) = \sigma_k(J^1 \circ i^* i \circ J^{-1}) = \sigma_k \left( J^{-2}: L^2(Q_{\Omega,R},x_{n+1}^{1-2s}) \to L^2(Q_{\Omega,R},x_{n+1}^{1-2s}) \right) = k^{-\frac{2}{n+1}},
\end{align*}
where we used in the last step that $J^{-2}: L^2(Q_{\Omega,R},x_{n+1}^{1-2s}) \to L^2(Q_{\Omega,R},x_{n+1}^{1-2s})$ is a diagonal operator.
\end{proof}

Last but not least, by an extension argument the above discussion also gives rise to singular value estimates for the  embedding $i: H^1(Q_{\Omega,R}, x_{n+1}^{1-2s}) \to L^2(Q_{\Omega,R}, x_{n+1}^{1-2s})$. It will be these bounds, which we use in the iterative compression gain in the following section.

\begin{proposition}\label{PropEstimateSingularValues}
Let $\Omega$ be open, non-empty, bounded and Lipschitz, and let $R>0$. The embedding $i: H^1(Q_{\Omega,R}, x_{n+1}^{1-2s}) \to L^2(Q_{\Omega,R}, x_{n+1}^{1-2s})$ satisfies the entropy bounds
\begin{align*}
\sigma_k(i) \lesssim k^{-\frac{1}{n+1}}.
\end{align*}
\end{proposition}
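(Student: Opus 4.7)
The plan is to reduce to the Dirichlet case handled in Proposition \ref{PropEstimateSingularValuesZeroBoundaryData} by means of a bounded extension operator. Specifically, I would choose a Lipschitz bounded set $\Omega^* \Supset \Omega$ and a height $R^* \in (R, 2R)$, and construct a bounded linear extension $E : H^1(Q_{\Omega, R}, x_{n+1}^{1-2s}) \to H^1_0(\Gamma_{\Omega^*, R^*}, x_{n+1}^{1-2s})$ with $(E\tilde u)|_{Q_{\Omega, R}} = \tilde u$. The embedding would then factor as $i = r \circ j \circ E$, where $j$ is the embedding from Proposition \ref{PropEstimateSingularValuesZeroBoundaryData} on the larger cylinder and $r$ is the restriction back to $Q_{\Omega, R}$. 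Since singular values satisfy $\sigma_k(ACB) \leq \|A\| \, \sigma_k(C) \, \|B\|$, this would immediately give
\[
\sigma_k(i) \leq \|r\| \, \sigma_k(j) \, \|E\| \lesssim k^{-\frac{1}{n+1}},
\]
as claimed.

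To construct $E$, I would first extend in the tangential $x$-direction by applying a Stein-type extension operator $E_x : H^1(\Omega) \to H^1(\R^n)$ slicewise in $x_{n+1}$ and then multiplying by a cutoff $\chi \in C^\infty_c(\Omega^*)$ with $\chi \equiv 1$ on $\overline\Omega$. Since the Muckenhoupt weight $x_{n+1}^{1-2s}$ is independent of the tangential variable, this step is bounded in the weighted $H^1$ norm. I would then extend in the normal variable past $R$ via a cutoff reflection: on $(R, R^*)$ I set the extension to $\psi(x_{n+1}) \tilde u_1(x, 2R - x_{n+1})$, where $\tilde u_1$ denotes the intermediate $x$-extension and $\psi \in C^\infty([0, R^*))$ satisfies $\psi \equiv 1$ on $[0, R]$ and $\psi(R^*) = 0$. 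Continuity at $x_{n+1} = R$ is automatic, and the resulting function has compact support in $\Omega^* \times [0, R^*)$, hence lies in $H^1_0(\Gamma_{\Omega^*, R^*}, x_{n+1}^{1-2s})$.

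The main obstacle is to verify boundedness of the extension in the weighted norm, and here the Muckenhoupt weight must be handled carefully. A naive reflection from $(0, R)$ across $R$ all the way into $(R, 2R)$ would access function values near $x_{n+1} = 0$, where $x_{n+1}^{1-2s}$ degenerates for $s < 1/2$; this would produce an uncontrolled contribution on the reflected side, where the weight is instead of order one. The restriction $R^* < 2R$ bypasses this entirely: the reflection now only draws from the sub-interval $(2R - R^*, R) \subset (0, R)$, on which $x_{n+1}^{1-2s}$ is bounded above and below by positive constants depending only on $s$, $R$ and $R^*$. With this choice, the change of variables $y = 2R - x_{n+1}$ together with the comparability of the weights on the two matched intervals immediately controls both $\|E\tilde u\|_{L^2}$ and $\|\nabla E\tilde u\|_{L^2}$ (in the weighted sense) on $(R, R^*)$ by the corresponding norms of $\tilde u_1$ on $(2R - R^*, R)$, and thus by $\|\tilde u\|_{H^1(Q_{\Omega, R}, x_{n+1}^{1-2s})}$. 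Combining the two steps yields the required bound $\|E\| < \infty$ and completes the argument.
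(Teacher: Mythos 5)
Your proposal is correct and takes essentially the same route as the paper: factorize $i$ through a bounded extension into the homogeneous Dirichlet space $H^1_0(\Gamma_{\Omega^*,R^*},x_{n+1}^{1-2s})$ and invoke Proposition \ref{PropEstimateSingularValuesZeroBoundaryData}. You spell out the construction that the paper only sketches as ``a suitable reflection and cut-off,'' and your observation that confining the reflection to $R^*<2R$ keeps the Muckenhoupt weight two-sidedly comparable on the matched intervals is precisely what makes the extension bounded uniformly in $s\in(0,1)$.
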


\begin{proof}
Let $\tilde{R}>R$, let $\tilde{\Omega} \supset \Omega$ be open, bounded and Lipschitz regular, and consider the embedding $\tilde{i}: H_0^1(\Gamma_{\tilde{\Omega},\tilde{R}},x_{n+1}^{1-2s}) \to L^2(Q_{\tilde{\Omega},\tilde{R}},x_{n+1}^{1-2s})$. Since $\Omega$ is Lipschitz regular, there exists an extension operator (e.g., a suitable reflection and cut-off) $E:H^1(Q_{\Omega,R},x_{n+1}^{1-2s}) \to H^1(Q_{\tilde{\Omega},\tilde{R}},x_{n+1}^{1-2s})$. Let $\chi \in C_c^\infty(\Gamma_{\tilde{\Omega},\tilde{R}})$ be a smooth cutoff-function such that $\chi \equiv 1$ on $\Gamma_{\Omega,R}$ and let $m_\chi: H^1(Q_{\tilde{\Omega},\tilde{R}}, x_{n+1}^{1-2s}) \to H_0^1(\Gamma_{\tilde{\Omega},\tilde{R}}, x_{n+1}^{1-2s})$ be the multiplication operator by $\chi$. Finally, we let $R_\Omega: L^2(Q_{\tilde{\Omega},\tilde{R}},x_{n+1}^{1-2s}) \to L^2(Q_{\Omega,R},x_{n+1}^{1-2s})$, $R_\Omega \tilde{u} = \restr{\tilde{u}}{\Omega}$. Then $i = R_\Omega \circ \tilde{i} \circ m_\chi \circ E$. Consequently we have the upper bound
\begin{align*}
\sigma_k(i) \leq \Vert R_\Omega \Vert_{Op} \sigma_k(\tilde{i}) \Vert m_\chi \Vert_{Op} \Vert E \Vert_{Op} \lesssim k^{-\frac{1}{n+1}},
\end{align*}
where we have used the entropy bounds for $\tilde{i}$ from Proposition \ref{PropEstimateSingularValuesZeroBoundaryData}.
\end{proof}

\subsection{Proof of Theorem \ref{thm:low_reg_instab} for compactly supported $\bar{q}$}
\label{sec:low_reg_proof1}

With the entropy estimates from the previous subsection, we proceed to the proof of Theorem \ref{thm:low_reg_instab}. We first discuss this in the setting of $\bar{q}$ with $\supp(\bar{q}) \subset \Omega'$, for $\Omega' \subset \Omega$ compact and Lipschitz, in order to introduce the main ideas. In the next section we then extend it to more general $\bar{q}$.

\begin{proof}[Proof of Theorem \ref{thm:low_reg_instab} for compactly supported $\bar{q}$]
We split the proof into two steps. The first deduces the central compression estimate for a variable coefficient analogue of the comparison operator from Proposition \ref{prop:UniformBounds}. The second step then concludes the instability result by the usual comparison argument.

\emph{Step 1: Compression estimate.}
We follow the strategy from \cite[Chapter 5]{KRS21}. 
Let $d:= \dist(\overline{\Omega'}, \partial \Omega)>0$. For $N\in \N$ suitably determined below, we further consider a sequence of nested sets $Q_{\Omega_j,R_j}$ and $\Gamma_{\Omega_j,R_j}$ such that $\Omega = \Omega_0 \supset \Omega_1 \supset \dots \supset \Omega_N \supset \Omega_{N+1} = \Omega'$, $\dist(\overline{\Omega_j},\partial \Omega_{j-1}) \geq \frac{d}{2N}$, $R_0=d+1$ and $R_j := d+1-\frac{d}{N+1}j$, for $j\in \{1,\dots,N+1\}$.

\begin{figure}
\begin{center}
  	\begin{tikzpicture}[thick]
  	\draw[->] (-0.5,0) -- (8,0) node[right] {$\R^n$};
  	\draw[->] (0,-0.5) -- (0,5) node[left] {$x_{n+1}$};
  	\draw (-1/8,3) -- (1/8,3) node[left=4mm] {$1$};
  	\draw (-1/8,4) -- (1/8,4) node[left=4mm] {$1+d$};
  	\draw [thick,blue,decorate,decoration={brace,amplitude=7pt,mirror},yshift=-2pt](2,0) -- (6,0) node[midway,yshift=-0.5cm] {$\Omega'$};
  	\draw [thick,blue,decorate,decoration={brace,amplitude=7pt,mirror},yshift=-2pt](1,-0.7) -- (7,-0.7) node[midway,yshift=-0.5cm] {$\Omega$};
  	
  	\draw[blue] (1,0) -- (1,4) -- (7,4) node[above,right] {$Q_{\Omega_0,R_0}$} -- (7,0);
  	\draw[blue] (2,0) -- (2,3) -- (6,3) node[below=7mm,left] {$\begin{aligned} &Q_{\Omega_{N+1},R_{N+1}}\\ &\quad = Q_{\Omega',1} \end{aligned}$} -- (6,0);
  	\draw[dashed] (1.2,0) -- (1.2,3.8) -- (6.8,3.8) node[below=2mm,left] {\tiny{$Q_{\Omega_j,R_j}$}} -- (6.8,0);
  	\draw[dashed] (1.6,0) -- (1.6,3.4) -- (6.4,3.4) node[below=2mm,left] {\tiny{$Q_{\Omega_k,R_k}$}} -- (6.4,0);
  	\end{tikzpicture}
\end{center}
\caption{Nesting of the sets $Q_{\Omega_j,R_j}$ for $j\in\{0,\dots,N+1\}$, here with $k > j$.}
\end{figure}
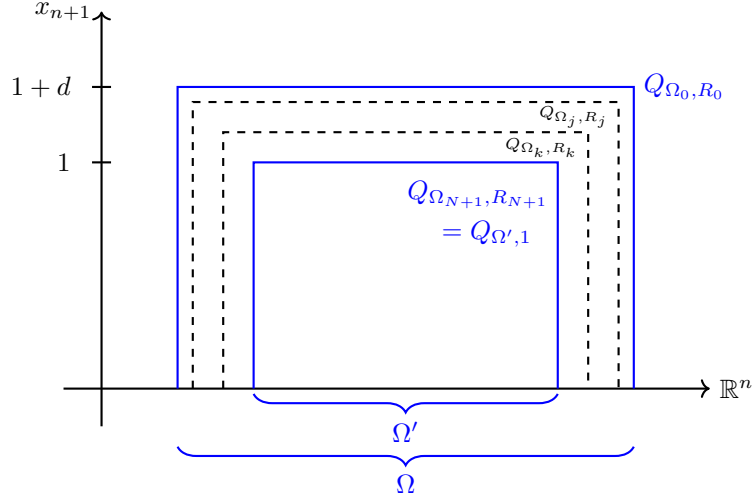

We then consider the variable coefficient analogue of the comparison operator from Proposition \ref{prop:UniformBounds} which in the variable coefficient setting reads
\begin{align*}
A: \widetilde{H}^s(W) \rightarrow H^{s}(\Omega'), \ f \mapsto u|_{\Omega'},
\end{align*}
where $u$ is a weak solution (in the Caffarelli-Silvestre extension sense) to
\begin{equation*}
\begin{cases}
\begin{alignedat}{2}
\left( \nabla \cdot a \nabla \right)^s u &= 0 \quad &&\mbox{in } \Omega,\\
u & = f \quad &&\mbox{in } \Omega_e.
\end{alignedat}
\end{cases}
\end{equation*}
We factorize this suitably to obtain the desired compression properties. Indeed, we claim that the singular values $\sigma_k(A)$ decay exponentially in $k$. More precisely, we prove that there exists some $\bar{c}>0$ such that
\begin{align}\label{ProofVariableCoeffSingularValueEstimate}
\sigma_k(A) \lesssim \exp(-\bar{c}k^{\frac{1}{n+2}}).
\end{align}
To observe this, we set
\begin{align*}
A = T_f \circ T_N \circ T_{N-1} \circ \dots \circ T_1 \circ T_{i}.
\end{align*}
Here the operators in the factorization are defined as follows:
\begin{itemize}
\item The initial operator is
\begin{align*}
T_i: \widetilde{H}^{s}(W) \rightarrow H^{1}(Q_{\Omega_0,R_0}, x_{n+1}^{1-2s}) \rightarrow L^2(Q_{\Omega_0,R_0}, x_{n+1}^{1-2s}), \
f \mapsto \tilde{u}|_{Q_{\Omega_0,R_0}} \mapsto \tilde{u}|_{Q_{\Omega_0,R_0}}.
\end{align*}
\item The iteration operators $T_1, \dots, T_N$ are given by
\begin{align*}
T_j: L^{2}&( Q_{\Omega_{j-1},R_{j-1}}, x_{n+1}^{1-2s}) \rightarrow H^1(Q_{\Omega_j,R_j}, x_{n+1}^{1-2s}) \rightarrow L^2(Q_{\Omega_j,R_j}, x_{n+1}^{1-2s}),\\
&\restr{\tilde{u}}{Q_{\Omega_{j-1},R_{j-1}}} \mapsto \restr{\tilde{u}}{Q_{\Omega_j,R_j}} \mapsto \restr{\tilde{u}}{Q_{\Omega_j,R_j}} .
\end{align*}
This uses Cacciopoli's inequality (Proposition \ref{PropCacciopoli} with $q=0$) which states that under the given boundary conditions it holds that
\begin{align*}
\| \tilde{u} \|_{H^1(Q_{\Omega_j,R_j}, x_{n+1}^{1-2s})} \leq C \frac{2N}{d} \| \tilde{u} \|_{L^2(Q_{\Omega_{j-1},R_{j-1}}, x_{n+1}^{1-2s})}. 
\end{align*}
In particular, relying on the singular value bounds for the embedding of $H^1(Q_{\Omega,R}, x_{n+1}^{1-2s}) \rightarrow L^2(Q_{\Omega,R}, x_{n+1}^{1-2s}) $ from Proposition \ref{PropEstimateSingularValues}, we obtain the singular value bounds
\begin{align*}
\sigma_k(T_j) \leq C \frac{2N}{d} k^{-\frac{1}{n+1}}.
\end{align*}
\item The final operator $T_f$ is defined as
\begin{align*}
T_f: L^2&(Q_{\Omega_N,R_N}, x_{n+1}^{1-2s}) \rightarrow H^1(Q_{\Omega',1}, x_{n+1}^{1-2s}) \rightarrow H^s(\Omega'), \\
&\tilde{u}|_{Q_{\Omega_N,R_N}} \mapsto \tilde{u}|_{Q_{\Omega',1}} \mapsto u|_{\Omega'}.
\end{align*}
\end{itemize}
Using that the entropy numbers for $T_i, T_f$ are bounded by a constant, we obtain the overall entropy estimate for the operator $A$:
\begin{align*}
\sigma_k(A) \leq C \prod\limits_{j=1}^N \sigma_{k/N}(T_j) \leq C \left( C \left(\frac{2N}{d}\right) \left(\frac{k}{N} \right)^{-\frac{1}{n+1}}\right)^N \leq C \left( \left(\frac{2C}{d}\right) \frac{N^{\frac{n+2}{n+1}}}{k^{\frac{1}{n+1}}} \right)^N.
\end{align*}
Optimizing for $N$ in terms of $k$, we choose
\begin{align*}
N = \rho k^{\frac{1}{n+2}},
\end{align*}
for $\rho>0$ a small constant. This yields that
\begin{align*}
\sigma_k(A) \leq C (C \rho^{\frac{n+2}{n+1}})^{\rho k^{\frac{1}{n+2}}},
\end{align*}
which proves the claim by rewriting the expression on the right hand side.

\emph{Step 2: Conclusion.}
As in the constant metric case, the overall instability result then follows by the comparison argument for the fractional Calder\'on argument as it is done in the proof of Theorem \ref{ThmInstability}. Indeed, we just need to take $\Gamma(q) := \Lambda_{a,q} - \Lambda_{a,0}$, write $(-\nabla\cdot a \nabla)^{s}$ instead of $(-\Delta)^{s}$ and replace the estimate \eqref{ProofFractLaplSingularValueEstimate} by estimate \eqref{ProofVariableCoeffSingularValueEstimate}.
\end{proof}

For the convenience of the reader, we recall from \cite[Proposition 2.3]{JLX14} Caccioppoli's estimate with our boundary conditions. 

\begin{proposition}[Proposition 2.3 in \cite{JLX14}]
\label{PropCacciopoli}
Let $s\in(0,1)$, $R>0$, $\Omega\subset \R^n$ be open, non-empty, bounded and Lipschitz. Let $\tilde{a} \in L^{\infty}(\R^{n+1}_+, \R^{(n+1) \times (n+1)})$ be symmetric and uniformly elliptic with ellipticity constant $\lambda$. Let $q \in L^{p}(\Omega)$ with $p> \frac{n}{2s}$, or $q \in L^{\frac{n}{2s}}(\Omega)$ with $\|q\|_{L^{\frac{n}{2s}}(\Omega)} \leq \theta$ and $\theta=\theta(n,s,\lambda) \in (0,1)$ sufficiently small. Assume that $\tilde{u} \in H^1(Q_{\Omega,R}, x_{n+1}^{1-2s})$ satisfies
\begin{equation*}
\begin{cases}
\begin{alignedat}{2}
\nabla \cdot x_{n+1}^{1-2s} \tilde{a} \nabla \tilde{u} &= 0 \quad &&\text{in } Q_{\Omega,R},\\
\lim_{x_{n+1}\to0} x_{n+1}^{1-2s} \partial_{n+1} \tilde{u} &= q \tilde{u}(\cdot,0) \quad &&\text{on } \Omega \times \{0\},
\end{alignedat}
\end{cases}
\end{equation*}
in the weak sense, i.e.,
\begin{align}\label{EqWeakFormCSNoBoundaryData}
\int_{Q_{\Omega,R}} x_{n+1}^{1-2s} \nabla \tilde{\varphi} \cdot \tilde{a} \nabla \tilde{u} \ d (x,x_{n+1}) = \int_{\Omega} q (\tilde{u} \tilde{\varphi})(x,0) dx \quad \text{for all } \tilde{\varphi} \in H_0^1(\Gamma_{\Omega,R},x_{n+1}^{1-2s}).
\end{align}
Then there exists $C>0$ such that for all $x_0 \in \Gamma_{\Omega,R}$ and for all $r_1,r_2>0$ such that $0 < r_1 < r_2 < \dist \left( x_0, \ \left( \partial\Omega \times (0,R) \right) \cup \left( \Omega \times \{R\} \right) \right)$, the following estimate holds
\begin{align*}
\int_{B_{r_1}(x_0) \cap \R^{n+1}_+} x_{n+1}^{1-2s} \vert \nabla \tilde{u} \vert^2 d(x,x_{n+1}) \leq \frac{C}{(r_2 - r_1)^2} \int_{B_{r_2}(x_0) \cap \R^{n+1}_+} x_{n+1}^{1-2s} \vert \tilde{u} \vert^2 d(x,x_{n+1}).
\end{align*}
The constant $C$ only depends on the dimension $n$, $s$, $\Vert \tilde{a} \Vert_{L^\infty(Q_{\Omega,R})}$, $\|q \|_{L^{p}(\Omega)}$, $\diam (Q_{\Omega,R})$ and $\lambda$.
\end{proposition}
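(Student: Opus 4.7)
My plan is to test the weak formulation \eqref{EqWeakFormCSNoBoundaryData} against a quadratic cutoff $\tilde\varphi = \eta^2 \tilde u$, where $\eta \in C_c^\infty(\R^{n+1})$ satisfies $\eta \equiv 1$ on $B_{r_1}(x_0)$, $\supp(\eta) \subset B_{r_2}(x_0)$, and $|\nabla\eta| \leq C/(r_2-r_1)$. Since $\tilde u \in H^1(Q_{\Omega,R},x_{n+1}^{1-2s})$ and the support of $\eta$ avoids $(\partial\Omega\times(0,R))\cup(\Omega\times\{R\})$, the function $\eta^2\tilde u$ is an admissible element of $H^1_0(\Gamma_{\Omega,R},x_{n+1}^{1-2s})$.

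Plugging this into \eqref{EqWeakFormCSNoBoundaryData}, expanding $\nabla(\eta^2\tilde u) = \eta^2\nabla\tilde u + 2\eta\tilde u\nabla\eta$, and using the ellipticity of $\tilde a$, I would first obtain
\begin{align*}
\lambda \int_{\R^{n+1}_+} x_{n+1}^{1-2s}\eta^2|\nabla\tilde u|^2\,d(x,x_{n+1}) &\leq -2\int_{\R^{n+1}_+} x_{n+1}^{1-2s}\eta\tilde u\,\nabla\eta\cdot\tilde a\nabla\tilde u\,d(x,x_{n+1})\\
& \quad + \int_{\Omega} q\,\eta^2\tilde u^2(x,0)\,dx.
\end{align*}
The first (bulk) cross term on the right is handled by Cauchy--Schwarz and Young's inequality with a small parameter $\varepsilon>0$, producing $\varepsilon\int x_{n+1}^{1-2s}\eta^2|\nabla\tilde u|^2 + C_\varepsilon\int x_{n+1}^{1-2s}|\nabla\eta|^2\tilde u^2$, where the first piece can be absorbed into the left-hand side. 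This yields the classical Caccioppoli bound in the $q\equiv 0$ case.

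The main obstacle is the boundary term $\int_\Omega q\,\eta^2\tilde u^2(x,0)\,dx$, whose control is precisely the reason for the hypotheses on $q$. I would treat it via the weighted trace/Sobolev inequality
\[
\|(\eta\tilde u)(\cdot,0)\|_{L^{\frac{2n}{n-2s}}(\R^n)}^2 \lesssim \|\nabla(\eta\tilde u)\|_{L^2(\R^{n+1}_+,\,x_{n+1}^{1-2s})}^2,
\]
which is the standard fractional Sobolev embedding obtained from the Caffarelli--Silvestre extension and the $H^s \hookrightarrow L^{2n/(n-2s)}$ embedding. In the case $q \in L^{n/(2s)}$, Hölder's inequality gives
\[
\int_\Omega |q|\,(\eta\tilde u)^2(x,0)\,dx \leq \|q\|_{L^{n/(2s)}(\Omega)}\,\|(\eta\tilde u)(\cdot,0)\|_{L^{2n/(n-2s)}(\R^n)}^2 \lesssim \theta \int_{\R^{n+1}_+} x_{n+1}^{1-2s}|\nabla(\eta\tilde u)|^2,
\]
and for $\theta$ small enough relative to $\lambda$, the resulting $\theta\int x_{n+1}^{1-2s}\eta^2|\nabla\tilde u|^2$ term can be absorbed into the left-hand side, while the remaining $\theta\int x_{n+1}^{1-2s}|\nabla\eta|^2\tilde u^2$ contribution is harmless. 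For the case $q \in L^{p}(\Omega)$ with $p > n/(2s)$, I would use Hölder with exponent $p$ combined with an interpolation of $L^{2p/(p-1)}(\Omega)$ between $L^2(\Omega)$ and $L^{2n/(n-2s)}(\Omega)$ (valid since $p > n/(2s)$) followed by Young's inequality to put any factor of $\|q\|_{L^p}$ in front of a term that can be absorbed, the remainder being controlled by $(r_2-r_1)^{-2}\int x_{n+1}^{1-2s}\tilde u^2$. Combining these estimates and using $\eta \equiv 1$ on $B_{r_1}(x_0)$ yields the stated inequality with $C$ depending only on $n,s,\|\tilde a\|_{L^\infty},\|q\|_{L^p},\diam(Q_{\Omega,R}),\lambda$.
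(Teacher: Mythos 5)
Your core plan coincides with the paper's: test the weak form against $\tilde\eta^2\tilde u$ for a cutoff $\tilde\eta$, use ellipticity plus Cauchy--Schwarz and Young on the bulk cross term, and control the boundary term $\int_\Omega q(\tilde\eta\tilde u)^2(\cdot,0)\,dx$ by H\"older followed by the trace-Sobolev estimate $\|(\tilde\eta\tilde u)(\cdot,0)\|_{L^{2n/(n-2s)}}\lesssim\|\nabla(\tilde\eta\tilde u)\|_{L^2(x_{n+1}^{1-2s})}$, absorbing the resulting gradient contribution when $\|q\|_{L^{n/(2s)}}$ is small. In that regime the two arguments are essentially identical.

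Where you diverge is the case $q\in L^p(\Omega)$ with $p>\tfrac{n}{2s}$, and there your sketch has a genuine gap. You propose H\"older with exponent $p$, interpolating $L^{2p/(p-1)}$ between $L^2$ and $L^{2n/(n-2s)}$, then Young's inequality, and you assert that ``the remainder \ldots is controlled by $(r_2-r_1)^{-2}\int x_{n+1}^{1-2s}\tilde u^2$.'' But the remainder produced by this Young step is $C_\varepsilon(\|q\|_{L^p})\,\|(\tilde\eta\tilde u)(\cdot,0)\|_{L^2(\Omega)}^2$, an $L^2$ norm of the \emph{boundary trace}, and there is no direct bound of this quantity by the bulk weighted $L^2$ norm of $\tilde u$. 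The trace of an $H^1(\,\cdot\,,x_{n+1}^{1-2s})$ function lands naturally in $H^s$, so bounding the $L^2$ trace requires either the full $H^1$-weighted norm (reintroducing $\|\nabla(\tilde\eta\tilde u)\|_{L^2(x_{n+1}^{1-2s})}$, which then needs its own absorption with an explicitly controlled small constant) or an interpolated trace inequality of the form $\|(\tilde\eta\tilde u)(\cdot,0)\|_{L^2}^2\leq\delta\|\nabla(\tilde\eta\tilde u)\|_{L^2(x^{1-2s})}^2+C_\delta\|\tilde\eta\tilde u\|_{L^2(x^{1-2s})}^2$, which is true (by compactness of the trace map plus the weighted Rellich theorem) but is an additional ingredient that you would need to state and justify, together with uniformity of $C_\delta$ in the geometry. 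None of this is present in your sketch, so as written the $p>\tfrac{n}{2s}$ case does not close.

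The paper avoids all of this much more cheaply: since $q\in L^p$ with $p>\tfrac{n}{2s}$, H\"older gives $\|q\|_{L^{n/(2s)}(B_\rho(x_0))}\leq\|q\|_{L^p(\Omega)}\,|B_\rho|^{\frac{2s}{n}-\frac{1}{p}}\to 0$ as $\rho\to 0$, so after restricting to sufficiently small balls (scaling $r_1,r_2$ by a harmless factor depending only on $n,s,\lambda,\|q\|_{L^p},\diam(Q_{\Omega,R})$) one is back in the small-$\|q\|_{L^{n/(2s)}}$ regime and the first argument applies verbatim. If you want to retain your interpolation route, you should make the $\epsilon$-trace inequality explicit and track that both absorptions succeed; otherwise, the localization trick is the simpler fix.
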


\begin{proof}
If $q \in L^p(\Omega)$ for $p>\frac{n}{2s}$ we can choose $\rho>0$ small such that $\Vert q \Vert_{L^{\frac{n}{2s}}(B_\rho(x_0))} < \theta$ for $\theta = \theta(n,s,\lambda)\in(0,1)$ sufficiently small. In that case scale $r_1$ and $r_2$ by $\alpha = \alpha(n, \Vert q \Vert_{L^p(\Omega)}, \theta, \diam(Q_{\Omega,R})) \in(0,1]$ such that additionally $0 < \alpha r_1 < \alpha r_2 < \rho$. Let $\tilde{\eta} \in C_c^\infty(\Gamma_{\Omega,R})$ be a cut-off function such that 
\begin{align*}
0 \leq \tilde{\eta} \leq 1, \quad \tilde{\eta} \equiv 1 \text{ on } B_{r_1}(x_0) \cap \overline{\R^{n+1}_+}, \quad \supp(\tilde{\eta}) \Subset B_{r_2}(x_0) \cap \overline{\R^{n+1}_+}, \quad \vert \nabla \tilde{\eta} \vert \lesssim \frac{1}{r_2-r_1}.
\end{align*}
We easily verify that in this case $\tilde{\eta}^2 \tilde{u} \in H_0^1(\Gamma_{\Omega,R}, x_{n+1}^{1-2s})$. Then, testing equation \eqref{EqWeakFormCSNoBoundaryData} against $\tilde{\eta}^2 \tilde{u}$ we get
\begin{equation}\label{Eq1PropCacciopoliIneq}
\begin{aligned}
\int_{\Omega} q (\tilde{\eta}\tilde{u})^2(x,0) dx &= \int_{Q_{\Omega,R}} x_{n+1}^{1-2s} \nabla (\tilde{\eta}^2\tilde{u}) \cdot \tilde{a} \nabla \tilde{u} \ d(x,x_{n+1})\\
&= \int_{Q_{\Omega,R}} x_{n+1}^{1-2s} (2\tilde{\eta}\nabla\tilde{\eta}\tilde{u} + \tilde{\eta}^2\nabla\tilde{u}) \cdot \tilde{a} \nabla\tilde{u} \ d(x,x_{n+1}).
\end{aligned}
\end{equation}
Using the ellipticity of $\tilde{a}$, equation \eqref{Eq1PropCacciopoliIneq}, the Cauchy-Schwarz and Young's inequality, we deduce
\begin{equation}\label{Eq2PropCacciopoliIneq}
\begin{aligned}
\lambda \Vert \tilde{\eta} \nabla\tilde{u} &\Vert^2_{L^2(Q_{\Omega,R},x_{n+1}^{1-2s})} = \lambda \int_{Q_{\Omega,R}} x_{n+1}^{1-2s} \tilde{\eta}^2  \vert \nabla\tilde{u} \vert^2 d(x,x_{n+1}) \leq \int_{Q_{\Omega,R}} x_{n+1}^{1-2s} \tilde{\eta}^2 \nabla\tilde{u} \cdot \tilde{a} \nabla\tilde{u} \ d(x,x_{n+1})\\
&= - \int_{Q_{\Omega,R}} x_{n+1}^{1-2s} 2\tilde{\eta} \nabla\tilde{\eta} \tilde{u} \cdot \tilde{a} \nabla\tilde{u} \ d(x,x_{n+1}) \ + \int_{\Omega} q (\tilde{\eta}\tilde{u})^2(x,0) dx\\
&\leq 2 \Vert \tilde{a} \Vert_{L^\infty(Q_{\Omega,R})} \Vert \tilde{\eta} \nabla\tilde{u} \Vert_{L^2(Q_{\Omega,R},x_{n+1}^{1-2s})} \Vert \tilde{u} \nabla\tilde{\eta} \Vert_{L^2(Q_{\Omega,R},x_{n+1}^{1-2s})} + \int_{\Omega} \vert q \vert (\tilde{\eta}\tilde{u})^2(x,0) dx\\
&\leq \frac{\lambda}{2} \Vert \tilde{\eta} \nabla\tilde{u} \Vert_{L^2(Q_{\Omega,R},x_{n+1}^{1-2s})}^2 + \frac{(2\Vert \tilde{a} \Vert_{L^\infty(Q_{\Omega,R})})^2}{2\lambda} \Vert \tilde{u} \nabla\tilde{\eta} \Vert_{L^2(Q_{\Omega,R},x_{n+1}^{1-2s})}^2 + \int_{\Omega} \vert q \vert (\tilde{\eta}\tilde{u})^2(x,0) dx\\
\end{aligned}
\end{equation}
Additionally, by using that $\tilde{\eta}$ is supported on $B_{r_2}(x_0)$, Hölder's inequality, Sobolev's inequality and the trace estimate we have
\begin{equation}\label{Eq3PropCacciopoliIneq}
\begin{aligned}
\int_\Omega \vert q \vert (\tilde{\eta}\tilde{u})^2(x,0) dx &= \int_{\Omega \cap B_{r_2}(x_0)} \vert q \vert (\tilde{\eta}\tilde{u})^2(x,0) dx \leq \Vert q \Vert_{L^{\frac{n}{2s}}(\Omega \cap B_{r_2}(x_0))} \Vert (\tilde{\eta}\tilde{u})(\cdot,0) \Vert_{L^{\frac{2n}{n-2s}}(\Omega)}^2\\
&\leq \theta C(n,s) \Vert (\tilde{\eta}\tilde{u})(\cdot,0) \Vert_{H^s(\Omega)}^2 \leq \theta C(n,s) \Vert \nabla (\tilde{\eta}\tilde{u}) \Vert_{L^2(Q_{\Omega,R},x_{n+1}^{1-2s})}^2\\
&\leq \theta C(n,s) \left( \Vert \tilde{\eta} \nabla\tilde{u} \Vert_{L^2(Q_{\Omega,R},x_{n+1}^{1-2s})}^2 + \Vert \tilde{u}\nabla\tilde{\eta} \Vert_{L^2(Q_{\Omega,R}, x_{n+1}^{1-2s})}^2 \right)
\end{aligned}
\end{equation}
If $\theta>0$ is so small that $\theta C(n,s) \leq \frac{\lambda}{4}$, then by inserting \eqref{Eq3PropCacciopoliIneq} into \eqref{Eq2PropCacciopoliIneq}, rearranging terms and using the definition of $\tilde{\eta}$ we finally infer
\begin{align*}
\int_{B_{r_1}(x_0) \cap \R^{n+1}_+} x_{n+1}^{1-2s} \vert \nabla\tilde{u} \vert^2 d(x,x_{n+1}) \leq \frac{C}{(r_2-r_1)^2} \int_{B_{r_2}(x_0) \cap \R^{n+1}_+} x_{n+1}^{1-2s} \vert \tilde{u} \vert^2 d(x,x_{n+1})
\end{align*}
with $C = C(n, s, \Vert \tilde{a} \Vert_{L^\infty(Q_{\Omega,R})}, \Vert q \Vert_{L^p(\Omega)}, \diam(Q_{\Omega,R}), \lambda)$, which finishes the proof. 
\end{proof}

\subsection{Proof of Theorem \ref{thm:low_reg_instab} for general background potential $\bar{q}$}
\label{sec:low_reg_proof2}

Since we will make use of this in the later sections, we here extend our results from the Section \ref{sec:low_reg_proof1} to include background potentials with non-compact support. The main difference here occurs in the definition of the comparison operator which incorporates the background potential.

\begin{proof}
Compared to the previous result, we seek to work with the comparison operator $A_{\bar{q}}: \widetilde{H}^{s}(W) \rightarrow H^{s}(\Omega')$ defined by $f \mapsto u_0|_{\Omega'}$, here $u_0 \in H^{s}(\R^n)$ denotes a solution to
\begin{equation*}
\begin{cases}
\begin{alignedat}{2}
(-\nabla \cdot a  \nabla)^s u_0 + \bar{q} u_0 &= 0 \quad &&\mbox{in } \Omega,\\
u_0 & = f \quad &&\mbox{in } \Omega_e.
\end{alignedat}
\end{cases}
\end{equation*}
Moreover, we let $u$ be a solution to the same equation with potential $q \in K := \{q \in L^p(\Omega): q \in \bar{q} + B_{r_0}^{W_0^{\delta, \frac{n}{2s}}(\Omega')}(0)\}$.
The difference $w:= u - u_0$ then satisfies the equation
\begin{equation*}
\begin{cases}
\begin{alignedat}{2}
\left( (-\nabla \cdot a \nabla)^s + q \right) w  &= (\bar{q} - q)u_0 \quad &&\mbox{in } \Omega,\\
w & = 0 \quad &&\mbox{in } \Omega_e,
\end{alignedat}
\end{cases}
\end{equation*}
and hence, doing the estimates in a similar fashion as in \eqref{EqEstimateForComparisonOperator} using that $\supp(\bar{q} - q) \subset \Omega'$,
\begin{align*}
\|\Gamma_{\bar{q}}(q) f\|_{H^{-s}(W)} \leq C \|u_0\|_{H^{s}(\Omega')} =  C \|A_{\bar{q}}(f)\|_{H^{s}(\Omega')},
\end{align*}
where $\Gamma_{\bar{q}}(q):= \Lambda_{q}- \Lambda_{\bar{q}}$. 
As in Section \ref{sec:low_reg_proof1} the claimed instability result then follows from compression estimates for the comparison operator $A_{\bar{q}}$.

The remainder of the argument then follows from an analogous singular value estimate as in the proof presented in the previous section, using the smallness of the background potential $\bar{q}$ and the general Caccioppoli inequality from Proposition \ref{Eq1PropCacciopoliIneq} above.

\end{proof}

\section{Instability for the fractional conductivity equation}\label{SectionProofCondEq}

Building on the results for the fractional Calder\'on problem in its Schrödinger formulation, in this section we proceed to the study of the uniqueness and instability properties of the problem
\begin{align}\label{before-reduction}
\begin{cases}
\begin{alignedat}{2}
\left( (\nabla \cdot)^s (\Theta \cdot\nabla^s) + q \right) u &= 0 \qquad &&\text{in } \Omega,\\
u &= f \qquad &&\text{in }  \Omega_e.
\end{alignedat}
\end{cases}
\end{align}

We start by proving Proposition \ref{uniqueness-frac-cond}, which shows uniqueness.

\begin{proof}[Proof of Proposition \ref{uniqueness-frac-cond}]
    Our proof is based on the fractional Liouville reduction (see \cite{C20} for the isotropic case, and \cite{C22} for the anisotropic one). We have that
\begin{align*}
    (\nabla \cdot)^s (\Theta \cdot\nabla^su) + qu &= \gamma^{1/2}(-\Delta)^sw + \gamma^{1/2}q_\gamma w + \gamma^{-1/2}qw 
    \\ & = 
    \gamma^{1/2}\left((-\Delta)^sw + Q w\right),
\end{align*}
where $w:= \gamma^{1/2}u$, $q_\gamma := -\gamma^{-1/2}(-\Delta)^s(\gamma^{1/2}-1)$, and $Q := q_\gamma + \gamma^{-1}q$. The second equivalence statement of what follows is more general than what is needed in this proof. However we state it in this form for later use (see proof of Theorem \ref{ThmInstability_other} below). Let $p\in[\frac{n}{2s},\infty)$. By our assumptions on $\gamma$, we see that $q_\gamma\in L^{p}(\R^n)$ and 
\begin{equation}\label{regularity-gamma}
    \gamma^{1/2}u\in H^s(\R^n) \Leftrightarrow u\in H^s(\R^n), \qquad q\in W^{\delta,p}(\Omega) \Leftrightarrow Q\in W^{\delta,p}(\Omega),
\end{equation} 
for all $\delta \geq 0$. In fact, since $\gamma$ is smooth, bounded, and strictly larger than a positive constant, it follows that $\gamma^{r}$ is a multiplier on $H^s(\R^n)$ for all $r\in \R$. This proves the first statement in \eqref{regularity-gamma} by choosing $r=\pm 1/2$. Moreover, since $\gamma^{1/2}-1 \in C^\infty_c(\R^n)$ by assumption, in particular we have $(-\Delta)^{s}(\gamma^{1/2}-1 )\in W^{\delta,p}(\R^n)$  for all $\delta \geq 0$ by the mapping properties of $(-\Delta)^s$. Since $C^1(\R^n)$-functions are multipliers of $W^{\delta,p}(\R^n)$ for $\delta\in[0,1)$, we have $q_\gamma\in W^{\delta,p}(\R^n)$, which in particular implies that $q_\gamma\in L^{p}(\R^n)$ as claimed. Finally, from the fact that $\gamma,\gamma^{-1}\in C^\infty(\R^n)$ we deduce that $q,Q$ must have the same regularity, which proves the second part of \eqref{regularity-gamma}.

By the fractional Liouville reduction, $u$ solves \eqref{before-reduction} if and only if $w$ solves
\begin{align}\label{after-reduction}
\begin{cases}
\begin{alignedat}{2}
((-\Delta)^s + Q) w &= 0 \qquad &&\text{in } \Omega,\\
w &= \gamma^{1/2}f \qquad &&\text{in }  \Omega_e,
\end{alignedat}
\end{cases}
\end{align}
where $\gamma^{1/2}f\in H^s(\R^n)$. Let $u_f, w_f$ be the solutions of problems \eqref{before-reduction}, \eqref{after-reduction} with exterior values $f, \gamma^{1/2}f$ respectively. We seek to compare the Dirichlet-to-Neumann maps $\Lambda_{\gamma,q}, \Lambda_{Q}$ corresponding to the problems \eqref{before-reduction}, \eqref{after-reduction} in the cases of two distinct potentials $q_1,q_2$, and thus we let $Q_{j} := q_\gamma + \gamma^{-1}q_j$ for $j=1,2$. We can compute
\begin{equation}\label{EqReductionInnerProductEquivalence}
\begin{aligned}
    \langle (\Lambda_{Q_{1}}-\Lambda_{Q_{2}})(\gamma^{1/2}f_1), \gamma^{1/2}f_2 \rangle & = \int_\Omega (Q_{1}-Q_{2})w_{f_1} w_{f_2} dx
    \\ & =
    \int_\Omega \gamma^{-1}(q_1-q_2)(\gamma^{1/2}u_{f_1}) (\gamma^{1/2}u_{f_2}) \, dx
    \\ & =
    \int_\Omega (q_1-q_2)u_{f_1} \, u_{f_2} \, dx
    \\ & =
    \langle (\Lambda_{\gamma,q_1}-\Lambda_{\gamma,q_2})f_1, f_2 \rangle ,
\end{aligned}
\end{equation}
and thus
\begin{equation}\label{second-norm-equivalence}
\begin{split}
   \| \Lambda_{\gamma,q_1}-\Lambda_{\gamma,q_2} \|_{\widetilde H^s(W_1)\rightarrow H^{-s}(W_2)} & = \sup_{f_j\in \widetilde H^s(W_j)} \frac{ |\langle (\Lambda_{\gamma,q_1}-\Lambda_{\gamma,q_2})f_1, f_2 \rangle| }{\|f_1\|_{H^s(\R^n)}\|f_2\|_{H^s(\R^n)}} \\ 
   &\sim \sup_{f_j\in \widetilde H^s(W_j)} \frac{ |\langle (\Lambda_{Q_{1}}-\Lambda_{Q_{2}})(\gamma^{1/2}f_1), \gamma^{1/2}f_2 \rangle| }{\|\gamma^{1/2} f_1\|_{H^s(\R^n)}\| \gamma^{1/2}f_2\|_{H^s(\R^n)}} 
   \\ & = 
   \|\Lambda_{Q_{1}}-\Lambda_{Q_{2}} \|_{\widetilde H^s(W_1)\rightarrow H^{-s}(W_2)},
\end{split}
\end{equation}
because $\|f\|_{H^s(\R^n)} \sim \|\gamma^{1/2} f\|_{H^s(\R^n)}$ for all $f\in H^s(\R^n)$, and $f\in \widetilde H^s(W) \Leftrightarrow \gamma^{1/2} f\in \widetilde H^s(W)$. Therefore, by the assumption on the Dirichlet-to-Neumann maps we have $\|\Lambda_{Q_{1}}-\Lambda_{Q_{2}} \|_{\widetilde H^s(W_1)\rightarrow H^{-s}(W_2)}=0$, that is $$\langle (\Lambda_{Q_{1}}-\Lambda_{Q_{2}}) f_1, f_2 \rangle=0, \qquad \mbox{ for all } f_j\in \widetilde H^s(W_j),\; j=1,2.$$
By the results of \cite{RS20} it now immediately follows that $Q_1=Q_2$ in $\Omega$. Since $\gamma$ is fixed a priori, from this we are able to deduce that $q_1=q_2$ in $\Omega$.
\end{proof}

\begin{remark}
    In the case in which the conductivity $\gamma$ is not fixed a priori, the relation $Q_1=Q_2$ in $\Omega$ only defines a gauge class to which the couples $(\gamma_1, q_1)$, $(\gamma_2, q_2)$ both belong. This was to be expected in light of \cite[Section 6]{C20a}.
\end{remark}

With the proof of Proposition \ref{uniqueness-frac-cond} at hand, we now move to the proof of our main instability result  for the fractional conductivity equation from Theorem \ref{ThmInstability_other}.

\begin{proof}[Proof of Theorem \ref{ThmInstability_other}] The proof is again based on the fractional Liouville reduction, and thus we continue using the symbols from the previous proof. Observe that since $\delta\in(0,1)$, $\gamma^r$ is a multiplier of $W^{\delta,p}(\Omega)$ for all $r\in\R$, so we can estimate
\begin{align*}
\|Q_1-Q_2\|_{W^{\delta,p}(\Omega)} = \|\gamma^{-1}(q_1 -q_2)\|_{W^{\delta,p}(\Omega)} \lesssim \|q_1 -q_2\|_{W^{\delta,p}(\Omega)}.
\end{align*}
Similarly, we obtain $\|q_1-q_2\|_{W^{\delta,p}(\Omega)} = \|\gamma(Q_1 -Q_2)\|_{W^{\delta,p}(\Omega)} \lesssim \|Q_1 -Q_2\|_{W^{\delta,p}(\Omega)} $, and thus
\begin{equation}
    \label{first-norm-equivalence}
    \|q_1-q_2\|_{W^{\delta,p}(\Omega)} \sim \|Q_1 -Q_2\|_{W^{\delta,p}(\Omega)}.
\end{equation} 

The proof now follows from \eqref{first-norm-equivalence} and \eqref{second-norm-equivalence}. In order to see this, let $\bar{q} \in L^{p}(\Omega)$ satisfy condition (Aq) with respect to problem \eqref{before-reduction} and radius $r_0>0$, and define $\bar Q := q_\gamma + \gamma^{-1}\bar q$. Then, $\bar Q\in L^{p}(\Omega)$ by \eqref{regularity-gamma}, and it satisfies condition (Aq) with respect to problem \eqref{after-reduction} and radius $R_0:=\|\gamma\|_{C^1(\R^n)}^{-1} r_0>0$. In fact, if $\| Q-\bar Q \|_{L^{p}(\Omega)}<R_0$, then
$$ \|(\gamma Q-\gamma q_\gamma) - \bar q\|_{L^{\frac{n}{2s}}(\Omega)} = \|\gamma Q - \gamma \bar Q\|_{L^{\frac{n}{2s}}(\Omega)} < \overline\gamma \|\gamma\|_{C^1(\R^n)}^{-1} r_0 \leq r_0,$$
which implies that $0$ is not a Dirichlet eigenvalue for problem \eqref{before-reduction} with potential $q:= \gamma Q-\gamma q_\gamma$. Equivalently, $0$ is not a Dirichlet eigenvalue for the transformed problem \eqref{after-reduction} with potential $Q$, which shows condition (Aq) in the reduced case.

Let now $Q_1, Q_2 \in L^p(\Omega)$ be such that $Q_1, Q_2 \in \bar{Q} + B_{R_0}^{W_0^{\delta,p}(\Omega')}(0)$ with
\begin{align*}
\Vert \Lambda_{Q_1} - \Lambda_{Q_2} \Vert_{\tilde{H}^s(W) \to H^{-s}(W)} &\leq \exp\left(-c E^{-\frac{1}{\delta(2+\frac{5}{n})}}\right),\\
\Vert Q_1 - Q_2 \Vert_{L^{p}(\Omega)} &\geq E,
\end{align*} 
and $0<E<R_0$, with $E$ small enough. The existence of $Q_1, Q_2$ follows from Theorem \ref{thm:low_reg_instab}. Using \eqref{first-norm-equivalence} and \eqref{second-norm-equivalence} we deduce
\begin{align*}
\Vert \Lambda_{\gamma,q_1} - \Lambda_{\gamma,q_2} \Vert_{\tilde{H}^s(W) \to H^{-s}(W)} &\lesssim \exp\left(-c E^{-\frac{1}{\delta(2+\frac{5}{n})}}\right),\\
\Vert q_1 - q_2 \Vert_{L^{p}(\Omega)} &\gtrsim E,
\end{align*} 
where $q_j := \gamma Q_j - \gamma q_\gamma$, for $j=1,2$. We have $\supp(q_j-\bar{q})\subset\Omega'$ and, by \eqref{regularity-gamma},  $q_j \in L^p(\Omega)$. It also holds
$$\|q_j-\bar q\|_{W^{\delta,p}(\Omega)} \leq \|\gamma\|_{C^1(\Omega)}\|Q_j-\bar Q\|_{W^{\delta,p}(\Omega)} < R_0\|\gamma\|_{C^1(\R^n)} = r_0,$$
and thus $q_1, q_2 \in \bar{q} + B_{r_0}^{W_0^{\delta,p}(\Omega')}(0)$. Finally, we see that the statement of the theorem holds with $\varepsilon := CE\|\gamma\|_{C^1(\R^n)}$.
\end{proof}

\section{Single measurement instability}\label{sec:single_meas}

In this section we provide the proof of Corollaries \ref{CorSingleMeasFractCald} and \ref{CorSingleMeasFractCond} as an immediate consequence of the main results of Theorem \ref{ThmInstability} and Theorem \ref{ThmInstability_other}, respectively.

\begin{proof}[Proof of Corollary \ref{CorSingleMeasFractCald}]
Let $q_1,q_2 \in L^{\frac{n}{2s}}(\Omega)$ be two potentials having the properties from Theorem \ref{ThmInstability}. Then we have
\begin{align*}
\Vert \Lambda_{q_1}(f) - \Lambda_{q_2}(f) \Vert_{H^{-s}(W)} \leq \Vert \Lambda_{q_1} - \Lambda_{q_2} \Vert_{\widetilde{H}^s(W) \to H^{-s}(W)} \Vert f \Vert_{H^s(W)} \lesssim \exp\left(-c \varepsilon^{- \frac{1}{\delta(2+\frac{1}{n})}}\right)
\end{align*}
and the result follows.
\end{proof}

The proof of Corollary \ref{CorSingleMeasFractCond} is done in the same way applying Theorem \ref{ThmInstability_other} instead of Theorem \ref{ThmInstability}.

We conclude this section with the proof of the single measurement stability result for the fractional conductivity problem.

\begin{proof}[Proof of Proposition \ref{PropSingleMeasStabFractCond}]
This result is a consequence of the fractional Liouville reduction and an application of Theorem 1 from \cite{R21}. We will again use the notation from the proof of Proposition \ref{uniqueness-frac-cond} in Section \ref{SectionProofCondEq}.\\
We recall that, by the fractional Liouville reduction, $u$ solves
\begin{align*}
\begin{cases}
\begin{alignedat}{2}
\left( (\nabla \cdot)^s (\Theta \cdot\nabla^s) + q \right) u &= 0 \qquad &&\text{in } \Omega,\\
u &= f \qquad &&\text{in }  \Omega_e,
\end{alignedat}
\end{cases}
\end{align*}
if and only if $w$ solves
\begin{align*}
\begin{cases}
\begin{alignedat}{2}
((-\Delta)^s + Q) w &= 0 \qquad &&\text{in } \Omega,\\
w &= \gamma^{1/2}f \qquad &&\text{in }  \Omega_e,
\end{alignedat}
\end{cases}
\end{align*}
where $w := \gamma^{1/2} u$, $Q := q_\gamma + \gamma^{-1}q$ and $q_\gamma := -\gamma^{-1/2} (-\Delta)^s \left( \gamma^{1/2} - 1 \right)$.\\
First of all we note that
\begin{align*}
\frac{\Vert \gamma^{1/2} f \Vert_{H^s(W)}}{\Vert \gamma^{1/2} f \Vert_{L^2(W)}} \leq \frac{\Vert \gamma^{1/2} \Vert_{C^1(\R^n)} \Vert f \Vert_{H^s(W)}}{\underline{\gamma}^{1/2} \Vert f \Vert_{L^2(W)}} \leq C \frac{\Vert f \Vert_{H^s(W)}}{\Vert f \Vert_{L^2(W)}} \leq \tilde{F},
\end{align*}
and
\begin{align*}
\Vert Q_j \Vert_{C^{0,s}(\Omega)} &\leq \Vert \gamma^{-1/2} (-\Delta)^s \left( \gamma^{1/2}-1 \right) \Vert_{C^{0,s}(\Omega)} + \Vert \gamma^{-1} q_j \Vert_{C^{0,s}(\Omega)}\\
&\leq \Vert \gamma^{-1/2} (-\Delta)^s \left( \gamma^{1/2}-1 \right) \Vert_{C^1(\Omega)} + \Vert \gamma^{-1} \Vert_{C^{0,s}(\Omega)} \Vert q_j \Vert_{C^{0,s}(\Omega)} \leq \tilde{E},
\end{align*}
where $\tilde{F}$ and $\tilde{E}$ only depend on $\gamma$ and $F$, $E$, respectively. Here we used that, since $\gamma^{1/2} -1 \in C_c^\infty(\R^n)$ by assumption, we obtain by the mapping properties of $(-\Delta)^s$ and Sobolev embedding that $q_\gamma \in C^m(\R^n)$ for all $m\in\N$. Moreover we have $\supp(Q_2 - Q_1) \subset \Omega'$. Thus, we can apply Theorem 1 from \cite{R21} (see also Remark 5.2 in \cite{R21}) to deduce
\begin{align}\label{EqStabilityEstimate}
\Vert Q_1 - Q_2 \Vert_{L^\infty(\Omega)} \leq \tilde{\omega} (\Vert \Lambda_{Q_1}(\gamma^{1/2}f) - \Lambda_{Q_2}(\gamma^{1/2}f) \Vert_{H^{-s}(W)})
\end{align}
for some $\tilde{\omega}$ with $\tilde{\omega}(t) \leq \tilde{c} \vert \log(\tilde{c}t) \vert^{-\tilde \mu}$, where $\tilde\mu$ and $\tilde{c}$ depend on the given parameters.\\
As in \eqref{EqReductionInnerProductEquivalence}, we derive for $h \in \widetilde{H}^s(W)$
\begin{align*}
\langle (\Lambda_{Q_{1}}-\Lambda_{Q_{2}})(\gamma^{1/2}f), \gamma^{1/2}h \rangle = \langle (\Lambda_{\gamma,q_1}-\Lambda_{\gamma,q_2})f, h \rangle,
\end{align*}
and thus
\begin{equation}\label{EqEquivalenceOfNorms}
\begin{aligned}
\Vert \Lambda_{\gamma,q_1}(f)& - \Lambda_{\gamma,q_2}(f) \Vert_{H^{-s}(W)} = \sup_{h \in \widetilde{H}^s(W)} \frac{\vert \langle (\Lambda_{\gamma,q_1} - \Lambda_{\gamma,q_2})(f), h \rangle \vert}{\Vert h \Vert_{H^s(W)}}\\
&\sim \sup_{h \in \widetilde{H}^s(W)} \frac{\vert \langle (\Lambda_{Q_{1}}-\Lambda_{Q_{2}})(\gamma^{1/2}f), \gamma^{1/2}h \rangle \vert}{\Vert \gamma^{1/2} h \Vert_{H^s(W)}} = \Vert \Lambda_{Q_1}(\gamma^{1/2}f) - \Lambda_{Q_2}(\gamma^{1/2}f) \Vert_{H^{-s}(W)},
\end{aligned}
\end{equation}
since $\Vert h \Vert_{H^s(\R^n)} \sim \Vert \gamma^{1/2} h \Vert_{H^s(R^n)}$ for all $h \in H^s(\R^n)$, and $h \in \widetilde{H}^s(W)$ if and only if $\gamma^{1/2}h \in \widetilde{H}^s(W)$. Additionally, we have
\begin{align*}
\Vert Q_1 - Q_2 \Vert_{L^\infty(\Omega)} = \Vert \gamma^{-1} (q_1 - q_2) \Vert_{L^\infty(\Omega)} \leq \underline{\gamma}^{-1} \Vert q_1 - q_2 \Vert_{L^\infty(\Omega)},
\end{align*}
and
\begin{align*}
\Vert q_1 - q_2 \Vert_{L^\infty(\Omega)} = \Vert \gamma (Q_1 - Q_2) \Vert_{L^\infty(\Omega)} \leq \overline{\gamma} \Vert Q_1 - Q_2 \Vert_{L^\infty(\Omega)},
\end{align*}
and thus
\begin{align}\label{EqEquivalenceOfNorms2}
\Vert Q_1 - Q_2 \Vert_{L^\infty(\Omega)} \sim \Vert q_1 - q_2 \Vert_{L^\infty(\Omega)}.
\end{align}
Putting together \eqref{EqStabilityEstimate}, \eqref{EqEquivalenceOfNorms} and \eqref{EqEquivalenceOfNorms2}, we finally infer
\begin{align*}
\Vert q_1 - q_2 \Vert_{L^\infty(\Omega)} \leq \omega (\Vert \Lambda_{\gamma,q_1}(f) - \Lambda_{\gamma,q_2}(f) \Vert_{H^{-s}(W)})
\end{align*}
for some $\omega$ with $\omega(t) \leq c \vert \log(ct) \vert^{-\mu}$, where $\mu$ and $c$ depend on the given parameters.
\end{proof}

\section*{Acknowledgements}

H.B. and A.R. gratefully acknowledge support of the Hausdorff Center for Mathematics which is funded by the Deutsche Forschungsgemeinschaft (DFG, German Research Foundation) under Germany's Excellence Strategy – EXC-2047/1 – 390685813. G.C. was supported by the PDE-Inverse project of the European Research Council of the European Union and Research Council of Finland FAME-flagship and the grant 336786. Views and opinions expressed are those of the authors only and do not necessarily reflect those of the European Union or the other funding organizations. Neither the European Union nor the other funding organizations can be held responsible for them.

\appendix

\section{Analyticity estimates for the heat equation}

For convenience of the reader, we recall the following analyticity estimates for the heat equation. The idea of the proof is based on \cite[Lemma 2.2]{EMZ17}.

\begin{lemma}\label{Lemma2.2[Emz17]}
Let $\Omega$ be an open, bounded, Lipschitz set and let $\Omega' \Subset \Omega$ be open and Lipschitz. Let $w$ be a weak solution to
\begin{equation*}
\begin{cases}
\begin{alignedat}{2}
(\p_t - \Delta) w &= 0 \quad &&\mbox{in } \Omega \times (0,1),\\
w & = 0 \quad &&\mbox{on } \Omega \times \{0\}.
\end{alignedat}
\end{cases}
\end{equation*}
Then for all $\gamma\in\N^n$,
\begin{align*}
\Vert \partial_t \partial_x^\gamma w \Vert_{L^2(\Omega' \times (0,1))} + \sum\limits_{\alpha\in \N^n, |\alpha|\leq 2}\|\partial_x^{\alpha} \partial_x^{\gamma} w \|_{L^2(\Omega' \times (0,1))} \leq N \|w\|_{L^2(\Omega \times [0,1])} \rho^{-|\gamma|}|\gamma|!
\end{align*}
for some $N = N(n, \Omega', \Omega)$ and some $\rho = \rho(n, \Omega', \Omega)\in (0,1)$.
\end{lemma}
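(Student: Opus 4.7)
My plan is to iterate a parabolic Caccioppoli-type energy estimate across a telescoping family of nested interior subsets. The two structural observations I use are that any spatial derivative $\partial_x^\beta w$ again solves the heat equation with zero initial trace, and that the equation lets us trade one time derivative for two spatial derivatives, so it suffices to gain spatial regularity. Interior regularity for the heat equation renders $w$ smooth in $\Omega \times (0,1)$, which justifies all pointwise differentiations below.

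\textbf{One-step Caccioppoli.} For open Lipschitz sets $U \Subset V \subset \Omega$ set $d_{U,V} := \dist(U, \partial V) > 0$, and pick a cutoff $\eta \in C_c^\infty(V)$ with $\eta \equiv 1$ on $U$ and $|\nabla \eta| \leq 2/d_{U,V}$. If $v$ solves $(\partial_t - \Delta)v = 0$ in $\Omega \times (0,1)$ with $v|_{t=0} = 0$, testing the equation against $\eta^2 v$, integrating by parts in space and time, and using that the boundary term at $t=1$ is nonnegative and the one at $t=0$ vanishes, one obtains after absorbing cross terms by Cauchy--Schwarz
\begin{align*}
\int_0^1 \int_U |\nabla v|^2 \, dx \, dt \leq \frac{C}{d_{U,V}^2} \int_0^1 \int_V v^2 \, dx \, dt
\end{align*}
for a universal constant $C$. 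Since spatial derivatives commute with the heat operator and preserve the vanishing of the $t=0$ trace, this estimate can be applied to $v = \partial_x^\beta w$ for any $\beta$.

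\textbf{Iteration and counting.} I set $d := \dist(\Omega', \partial \Omega)$ and $N := |\gamma| + 2$, and introduce nested Lipschitz sets $\Omega = \Omega_0 \Supset \Omega_1 \Supset \cdots \Supset \Omega_N = \Omega'$ with $\dist(\Omega_{j+1}, \partial \Omega_j) \geq d/N$. Iterating the one-step estimate $|\gamma|$ times along a decomposition $\gamma = e_{i_1} + \cdots + e_{i_{|\gamma|}}$ yields
\begin{align*}
\|\partial_x^\gamma w\|_{L^2(\Omega_{|\gamma|} \times (0,1))} \leq \left(\frac{C N}{d}\right)^{|\gamma|} \|w\|_{L^2(\Omega \times (0,1))}.
\end{align*}
Two further Caccioppoli steps on the pairs $(\Omega_{|\gamma|+1}, \Omega_{|\gamma|})$ and $(\Omega_N, \Omega_{|\gamma|+1})$ applied to $\partial_x^\gamma w$ produce the analogous bound for $\partial_x^{\gamma+\alpha} w$ with $|\alpha| \leq 2$ on $\Omega' \times (0,1)$. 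The equation gives $\partial_t \partial_x^\gamma w = \Delta \partial_x^\gamma w$ pointwise, so the time-derivative term is controlled by the second spatial derivatives at no extra cost. Finally, Stirling's estimate $N^{|\gamma|} \leq e^{|\gamma|} (|\gamma|+2)! \lesssim e^{|\gamma|}|\gamma|!\,|\gamma|^2$ converts $(C N/d)^{|\gamma|+2}$ into $N' \rho^{-|\gamma|} |\gamma|!$ with $N', \rho > 0$ depending only on $n, \Omega, \Omega'$; after a mild rescaling one arranges $\rho \in (0,1)$.

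\textbf{Main obstacle.} The interesting point is the bookkeeping: the choice $N = |\gamma|+2$ with uniform shell width $d/N$ packs all $|\gamma|+2$ inclusions inside the fixed width $d$ and assembles the constants $(CN/d)^{|\gamma|+2}$ into exactly the Gevrey-$1$ (analytic) scaling $\rho^{-|\gamma|}|\gamma|!$. A minor subtlety, easily checked, is that $\partial_x^\beta w$ has vanishing trace at $t=0$ for every multi-index $\beta$; this follows from $w|_{t=0} = 0$ combined with interior smoothness of $w$ up to $t = 0$ away from $\partial \Omega$. Beyond this combinatorial accounting, no real difficulty arises.
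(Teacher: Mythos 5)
Your proof is correct, and although it lives in the same circle of ideas as the paper's argument (packing $\sim|\gamma|$ Caccioppoli shells of width $\sim d/|\gamma|$ into a fixed annulus of width $d$ and letting Stirling convert the resulting $|\gamma|^{|\gamma|}$-type blowup into the analytic factor $\rho^{-|\gamma|}|\gamma|!$), the route is genuinely different and more elementary in several respects. The paper follows \cite[Lemma 2.2]{EMZ17}: it proves the estimate on balls by induction on $|\gamma|$, with the induction hypothesis carrying the full weighted estimate \eqref{EqInductionHypothesisLemma2.2[EMZ17]}; the inductive step chooses $\delta = (R-r)/|\gamma|$; the basic building block is the second-order interior estimate (Lemma~A.5 of \cite{EMZ17}), which gains two spatial derivatives plus the time derivative at once; the first-order terms are recovered via Gagliardo--Nirenberg interpolation; and the general case is obtained afterwards by a ball covering argument. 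You instead derive a first-order parabolic Caccioppoli from scratch, telescope it directly across $N = |\gamma|+2$ nested Lipschitz subsets with uniform shell width $d/N$, eliminate $\partial_t$ via the equation, and multiply out the constants --- no induction hypothesis, no appeal to an external interior estimate, no interpolation inequality, and no reduction to balls. What the paper's route buys is direct alignment with the cited reference and a single-step double gain of derivatives; what yours buys is self-containment and simpler bookkeeping. Two small points worth spelling out if you write this up: (i) your observation that $\partial_x^\beta w$ retains the zero trace at $t=0$ is cleanest justified by extending $w$ by zero to negative times and noting that the extension is still a weak caloric function across $t=0$ (so that parabolic hypoellipticity gives $\partial_x^\beta w(x,0)=0$ in the interior), rather than by an informal appeal to "interior smoothness up to $t=0$"; (ii) the telescoping bound actually has the base $\sqrt{C}N/d$ rather than $CN/d$ (the Caccioppoli constant sits under a square root at the $L^2$-norm level), a renaming that doesn't affect the conclusion but should be made consistent.
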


\begin{proof}
We prove the statement for balls $\Omega = B_R$ and $\Omega' = B_r$ with $r < R \leq 1$. A typical ball covering argument then yields the result for general domains $\Omega$ and $\Omega'$. As in \cite{EMZ17} we prove the statement for balls inductively. Our induction hypothesis then reads
\begin{equation}\label{EqInductionHypothesisLemma2.2[EMZ17]}
\begin{aligned}
(R-r)^2 \Vert \partial_t \partial_x^\gamma w \Vert_{L^2(B_r \times (0,1))} &+ \sum_{\alpha\in\N^n, \vert\alpha\vert\leq2} (R-r)^{\vert\alpha\vert} \Vert \partial_x^\alpha \partial_x^\gamma w \Vert_{L^2(B_r \times (0,1))}\\
&\leq N_1 \Vert w \Vert_{L^2(B_R \times (0,1))} \left( \frac{N_2}{R-r} \right)^{\vert\gamma\vert} \vert\gamma\vert! \qquad \mbox{for any } \gamma\in\N^n
\end{aligned}
\end{equation}
for $N_1$ and $N_2$ large enough constants to be chosen. We define
\begin{align*}
D := 2K+2,
\end{align*}
where $K$ is the constant in the Gagliardo-Nirenberg interpolation inequality (see below). We now start with the induction.

\textit{Step 1: Initial step.} When $\vert\gamma\vert = 0$, then by Lemma A.5 in \cite{EMZ17} we have for $\delta>0$ such that $r+\delta \leq R$ for some constant $N_0\geq1$ depending only on the dimension $n$ and $R$
\begin{align*}
\Vert \partial_t w \Vert_{L^2(B_r \times (0,1))} + \Vert D^2_x w \Vert_{L^2(B_r \times (0,1))} \leq N_0 \delta^{-2} \Vert w \Vert_{L^2(B_{r+\delta} \times (0,1))}
\end{align*}
and thus by the Gagliardo-Nirenberg interpolation inequality we have for appropriately chosen $N_1$ depending on $K$ (see also below when the Gagliardo-Nirenberg interpolation inequality is used again)
\begin{align}\label{Eq1Lemma2.2[EMZ17]}
\delta^2 \Vert \partial_t w \Vert_{L^2(B_r \times (0,1))} + \sum_{\alpha\in\N^n, \vert\alpha\vert\leq2} \delta^{\vert\alpha\vert} \Vert \partial_x^\alpha w \Vert_{L^2(B_r \times (0,1))} \leq N_1 \Vert w \Vert_{L^2(B_{r+\delta} \times (0,1))}.
\end{align}
If we choose $\delta = R-r$ we get
\begin{align}\label{Eq10Lemma2.2[EMZ17]}
(R-r)^2 \Vert \partial_t w \Vert_{L^2(B_r \times (0,1))} +  \sum_{\alpha\in\N^n, \vert\alpha\vert\leq2} (R-r)^{\vert\alpha\vert} \Vert \partial_x^\alpha w \Vert_{L^2(B_r \times (0,1))} \leq N_1 \Vert w \Vert_{L^2(B_{R} \times (0,1))}
\end{align}
which proves \eqref{EqInductionHypothesisLemma2.2[EMZ17]} for $\vert\gamma\vert=0$.

\textit{Step 2: Induction step.} For the induction step, we let $\gamma\in\N^n$ with $\vert\gamma\vert = l+1$ and we assume that the estimate \eqref{EqInductionHypothesisLemma2.2[EMZ17]} holds for all multi-indices $\xi\in\N^n$ with $\vert\xi\vert \leq l$. We differentiate the equation to infer the equation for $\partial_x^\gamma w$, which is given by
\begin{align*}
\begin{cases}
\begin{alignedat}{2}
(\p_t - \Delta) \p_x^{\gamma} w &= 0 \quad &&\mbox{in } \Omega \times (0,\infty),\\
\partial_x^\gamma w & = 0 \quad &&\mbox{on } \Omega \times \{0\}.
\end{alignedat}
\end{cases}
\end{align*}
Invoking again Lemma A.5 from \cite{EMZ17} we deduce that
\begin{align}\label{Eq2Lemma2.2[EMZ17]}
\Vert \partial_t \partial_x^\gamma w \Vert_{L^2(B_r \times (0,1))} + \Vert D_x^2 \partial_x^\gamma w \Vert_{L^2(B_r \times (0,1))} &\leq N_0 \delta^{-2} \Vert \partial_x^\gamma w \Vert_{L^2(B_{r+\delta} \times (0,1))}
\end{align}
We distinguish two cases to estimate the right hand side.

\textit{Case 1:} When $1 \leq \vert \gamma \vert \leq 2$, choose $\delta = (R-r)/2$ in \eqref{Eq2Lemma2.2[EMZ17]} to deduce from \eqref{Eq1Lemma2.2[EMZ17]} (with $r$ substituted by $r+\delta$) that
\begin{equation}\label{Eq3Lemma2.2[EMZ17]}
\begin{aligned}
N_0 \delta^{-2} \Vert \partial_x^\gamma w \Vert_{L^2(B_{r+\delta} \times (0,1))} &\leq \frac{4N_0}{(R-r)^2} \frac{2^{\vert\gamma\vert} N_1}{(R-r)^{\vert\gamma\vert}} \Vert w \Vert_{L^2(B_R \times (0,1))}\\
&\leq \frac{1}{D} \frac{N_1}{(R-r)^2} \Vert w \Vert_{L^2(B_R \times (0,1))} \left(\frac{N_2}{R-r}\right)^{\vert\gamma\vert} \vert\gamma\vert!,
\end{aligned}
\end{equation}
when $N_2 \geq \max\{ 8DN_0, 1 \}$. This estimates the right hand side in the case of $1 \leq \vert \gamma \vert \leq 2$.

\textit{Case 2:} When $\vert\gamma\vert >2$ we observe that there is a multi-index $\xi\in\N^n$ with $2+\vert\xi\vert = \vert\gamma\vert$ and $\Vert \partial_x^\gamma w \Vert_{L^2(B_r \times (0,1))} \leq \Vert D_x^2 \partial_x^\xi w \Vert_{L^2(B_r \times (0,1))}$. By the induction hypothesis \eqref{EqInductionHypothesisLemma2.2[EMZ17]} it follows by choosing $\delta = (R-r)/\vert\gamma\vert$ that
\begin{equation}\label{Eq4Lemma2.2[EMZ17]}
\begin{aligned}
N_0 \delta^{-2} \Vert \partial_x^\gamma w &\Vert_{L^2(B_{r+\delta} \times (0,1))} \leq N_0 \delta^{-2} \Vert D_x^2 \partial_x^\xi w \Vert_{L^2(B_{r+\delta} \times (0,1))}\\
&\leq \frac{N_0 \vert\gamma\vert^2}{(R-r)^2} \frac{N_1}{(R-(r+\delta))^2} \Vert w \Vert_{L^2(B_R \times (0,1))} \left( \frac{N_2}{(R-(r+\delta))} \right)^{\vert\gamma\vert-2} (\vert\gamma\vert - 2)!\\
&\leq \frac{N_0 \vert\gamma\vert^2}{(R-r)^2} \frac{N_1}{(R-r)^2} \Vert w \Vert_{L^2(B_R \times (0,1))} \left( \frac{N_2}{R-r} \right)^{\vert\gamma\vert-2} \left( \frac{\vert\gamma\vert}{\vert\gamma\vert - 1} \right)^{\vert\gamma\vert} (\vert\gamma\vert - 2)!\\
&\leq \frac{1}{D} \frac{N_1}{(R-r)^2} \Vert w \Vert_{L^2(B_R \times (0,1))} \frac{N_2^{\vert\gamma\vert - 1}}{(R-r)^{\vert\gamma\vert}} \vert\gamma\vert^2 (\vert\gamma\vert - 2)!\\
&\leq \frac{1}{D} \frac{N_1}{(R-r)^2} \Vert w \Vert_{L^2(B_R \times (0,1))} \left( \frac{N_2}{R-r} \right)^{\vert\gamma\vert} \vert\gamma\vert!.
\end{aligned}
\end{equation}
For the third inequality we have used that $R-(r+\delta) = \frac{\vert\gamma\vert - 1}{\vert\gamma\vert} (R-r)$. The fourth inequality holds if $N_2 \geq \max\{ DN_0M, 1 \}$ where $\infty > M \geq \sup_{m\in\N} \left( \frac{m}{m-1} \right)^{m}$, and for the last inequality we assumed that $N_2 \geq 2$ since $\vert\gamma\vert^2 (\vert\gamma\vert - 2)! \leq 2 \vert\gamma\vert!$ for $\vert\gamma\vert \geq 2$.

\textit{Conclusion of the induction step:} Bringing together \eqref{Eq2Lemma2.2[EMZ17]} and \eqref{Eq3Lemma2.2[EMZ17]} or \eqref{Eq4Lemma2.2[EMZ17]}, respectively, we deduce
\begin{align}\label{Eq6Lemma2.2[EMZ17]}
\Vert \partial_t \partial_x^\gamma w \Vert_{L^2(B_r \times (0,1))} + \Vert D_x^2 \partial_x^\gamma w \Vert_{L^2(B_r \times (0,1))} \leq \frac{1}{D} \frac{N_1}{(R-r)^2} \Vert w \Vert_{L^2(B_R \times(0,1))} \left( \frac{N_2}{R-r} \right)^{\vert\gamma\vert} \vert\gamma\vert!.
\end{align}
Additionally, from \eqref{Eq3Lemma2.2[EMZ17]} or \eqref{Eq4Lemma2.2[EMZ17]}, respectively, we find (using $\delta = (R-r)/\vert\gamma\vert$)
\begin{align}\label{Eq7Lemma2.2[EMZ17]}
\Vert \partial_x^\gamma w \Vert_{L^2(B_r \times (0,1))} \leq \frac{N_1}{D} \Vert w \Vert_{L^2(B_R \times(0,1))} \left( \frac{N_2}{R-r} \right)^{\vert\gamma\vert} \vert\gamma\vert!.
\end{align}
Then by the Gagliardo-Nirenberg interpolation inequality and using \eqref{Eq6Lemma2.2[EMZ17]} and \eqref{Eq7Lemma2.2[EMZ17]} it follows that for some constant $K>0$
\begin{equation}\label{Eq8Lemma2.2[EMZ17]}
\begin{aligned}
\Vert D_x \partial_x^\gamma w \Vert_{L^2(B_r \times (0,1))} &\leq K \Vert D_x^2 \partial_x^\gamma w \Vert_{L^2(B_r \times (0,1))}^{1/2} \Vert \partial_x^\gamma w \Vert_{L^2(B_r \times(0,1))}^{1/2} + K \Vert \partial_x^\gamma w \Vert_{L^2(B_r \times(0,1))}\\
&\leq \frac{2K}{D} \frac{N_1}{(R-r)} \Vert w \Vert_{L^2(B_R \times (0,1))} \left( \frac{N_2}{R-r} \right)^{\vert\gamma\vert} \vert\gamma\vert! 
\end{aligned}
\end{equation}
By \eqref{Eq6Lemma2.2[EMZ17]}, \eqref{Eq7Lemma2.2[EMZ17]} and \eqref{Eq8Lemma2.2[EMZ17]} we finally deduce
\begin{align*}
(R-r)^2 &\Vert \partial_t \partial_x^\gamma w \Vert_{L^2(B_r \times (0,1))} + \sum_{\alpha\in\N^n, \vert\alpha\vert\leq2} (R-r)^{\vert\alpha\vert} \Vert \partial_x^\alpha \partial_x^\gamma w \Vert_{L^2(B_r \times (0,1))}\\
&\leq (R-r)^2 \Vert \partial_t \partial_x^\gamma w \Vert_{L^2(B_r \times (0,1))} + \Vert \partial_x^\gamma w \Vert_{L^2(B_r \times (0,1))} + (R-r) \Vert D_x \partial_x^\gamma w \Vert_{L^2(B_r \times (0,1))}\\
&\quad + (R-r)^2 \Vert D_x^2 \partial_x^\gamma w \Vert_{L^2(B_r \times (0,1))}\\
&\leq N_1 \Vert w \Vert_{L^2(B_R \times (0,1))} \left( \frac{N_2}{R-r} \right)^{\vert\gamma\vert} \vert\gamma\vert!
\end{align*}
since $D$ was defined as $D = 2K +2$. Thus, \eqref{EqInductionHypothesisLemma2.2[EMZ17]} is proven and the result follows by induction.
\end{proof}

\begin{lemma}\label{LemmaLInftyEstimateHeatEq}
Let $\Omega$, $\Omega'$, $w$ be as in Lemma \ref{Lemma2.2[Emz17]}. Then for all $\gamma\in\N^n$,
\begin{align*}
\vert \partial_x^\gamma w(x_0,t) \vert \leq N \|w\|_{L^2(\Omega \times [0,1])} \rho^{-|\gamma|}|\gamma|! \quad \mbox{for } x_0\in\Omega', t\in[0,1)
\end{align*}
for some $N = N(n, \Omega', \Omega)$ and some $\rho = \rho(n, \Omega', \Omega)\in (0,1)$.
\end{lemma}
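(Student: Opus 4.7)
The plan is to bootstrap the $L^2$ estimates from Lemma \ref{Lemma2.2[Emz17]} to pointwise estimates via Sobolev embedding in space-time, after using the heat equation itself to eliminate time derivatives.

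First, I would pick an intermediate Lipschitz domain $\Omega'''$ with $\Omega' \Subset \Omega''' \Subset \Omega$ and fix an integer $k > (n+1)/2$. By interior regularity for the heat equation, $w$ is smooth in $\Omega \times (0,1)$, so $v := \partial_x^\gamma w$ is a classical solution of $(\partial_t - \Delta) v = 0$ with vanishing initial data on $\Omega \times \{0\}$. The Sobolev embedding $H^k(\Omega''' \times (0,1)) \hookrightarrow C^0(\overline{\Omega''' \times [0,1]})$ on the Lipschitz cylinder reduces the task to estimating $\|v\|_{H^k(\Omega''' \times (0,1))}$ with the asserted factorial scaling in $|\gamma|$, since $\Omega' \subset \overline{\Omega'''}$ and $t \in [0,1)$ lies in the closed time interval.

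Second, I would use $\partial_t v = \Delta v$ iteratively to rewrite each mixed derivative $\partial_x^\alpha \partial_t^j v$ with $|\alpha| + j \leq k$ as a linear combination of pure spatial derivatives $\partial_x^\xi v = \partial_x^{\gamma + \xi} w$ with $|\xi| \leq 2k$. Lemma \ref{Lemma2.2[Emz17]}, applied (with the choice $\alpha = 0$) to $w$ on an intermediate pair $\Omega''' \Subset \Omega'' \Subset \Omega$, then yields
\[
\|\partial_x^{\gamma+\xi} w\|_{L^2(\Omega''' \times (0,1))} \leq N \|w\|_{L^2(\Omega \times [0,1])} \rho^{-(|\gamma|+|\xi|)} (|\gamma|+|\xi|)!
\]
for uniform constants $N$ and $\rho$ depending only on $n, \Omega', \Omega$.

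Finally, since $k$ depends only on $n$, the ratio $(|\gamma|+|\xi|)!/|\gamma|!$ is bounded by $(|\gamma|+2k)^{2k}$, a polynomial in $|\gamma|$; combined with the factor $\rho^{-|\xi|} \leq \rho^{-2k}$, this polynomial growth can be absorbed by slightly shrinking $\rho$ to some $\tilde \rho < \rho$ using the elementary bound $|\gamma|^{2k} \leq C_k c^{|\gamma|}$ valid for any $c > 1$. Summing the finitely many contributions from $|\xi| \leq 2k$ and applying the Sobolev embedding of the second paragraph completes the argument. The main (mild) obstacle is the bookkeeping of the polynomial-in-$|\gamma|$ factor and the organization of the chain of inclusions $\Omega' \Subset \Omega''' \Subset \Omega'' \Subset \Omega$ needed so that Lemma \ref{Lemma2.2[Emz17]} and the Sobolev embedding can be applied compatibly on Lipschitz space-time cylinders.
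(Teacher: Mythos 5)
Your proof is correct, and while it reaches the same conclusion it organizes the Sobolev embedding differently from the paper. The paper's proof stays anisotropic: it first uses the $W^{1,2}$-in-time control of $\partial_x^\gamma w$ valued in $L^2(\Omega')$ (which comes directly from the $\partial_t\partial_x^\gamma w$ term in Lemma \ref{Lemma2.2[Emz17]}) together with the one-dimensional embedding $W^{1,2}(0,1)\hookrightarrow C^{0,1/2}([0,1])$ to obtain $\max_{t\in[0,1]}\|\partial_x^\gamma w(\cdot,t)\|_{L^2(\Omega')}$, and then separately applies the spatial embedding $H^k(\Omega')\hookrightarrow L^\infty(\Omega')$ with $k>n/2$, absorbing the $(|\gamma|+k)!$ overhead into a slightly smaller $\rho$. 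You instead use a single isotropic space--time Sobolev embedding $H^k(\Omega'''\times(0,1))\hookrightarrow C^0(\overline{\Omega'''\times[0,1]})$ with $k>(n+1)/2$, and trade all mixed time derivatives for pure spatial ones via $\partial_t^j v=\Delta^j v$, bounding derivatives up to order $|\gamma|+2k$ of $w$ via Lemma \ref{Lemma2.2[Emz17]}. Your absorption argument $(|\gamma|+|\xi|)!/|\gamma|!\leq(|\gamma|+2k)^{2k}\leq C_k c^{|\gamma|}$ is correct and parallels the paper's. The two differences that give your version slightly more overhead are: (i) you introduce an extra chain of nested Lipschitz domains to separate where Lemma \ref{Lemma2.2[Emz17]} and the Sobolev embedding are applied, whereas the paper gets by with the single pair $\Omega'\Subset\Omega$ because Lemma \ref{Lemma2.2[Emz17]} already holds on $\Omega'$ for every $\gamma$; and (ii) you discard the $\partial_t\partial_x^\gamma w$ bound supplied by Lemma \ref{Lemma2.2[Emz17]} and regenerate time control through the equation, which is a harmless redundancy. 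Both arguments are sound.
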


\begin{proof}
This result is an easy consequence of Lemma \ref{Lemma2.2[Emz17]} using Sobolev embeddings. First of all we note that by the estimate in Lemma \ref{Lemma2.2[Emz17]} we have $\partial_x^\gamma w \in W^{1,2}([0,1]; L^2(\Omega'))$ for any $\gamma\in\N^n$. Then by Sobolev embedding we have $\partial_x^\gamma w \in C^{0,1/2}([0,1];L^2(\Omega'))$ and
\begin{equation}\label{Eq1LemmaLInftyEstimateHeatEq}
\begin{aligned}
\max_{t\in[0,1]} \Vert \partial_x^\gamma w (\cdot,t) \Vert_{L^2(\Omega')} &\leq C_1 \Vert \partial_x^\gamma w \Vert_{W^{1,2}([0,1];L^2(\Omega'))} = C_1 \left( \Vert \partial_x^\gamma w \Vert_{L^2(\Omega' \times [0,1])} + \Vert \partial_t \partial_x^\gamma w \Vert_{L^2(\Omega' \times [0,1])} \right)\\
&\leq C_1 N \Vert w \Vert_{L^2(\Omega \times (0,1))} \rho^{-\vert\gamma\vert} \vert\gamma\vert!,
\end{aligned}
\end{equation}
where the last inequality follows by the estimate in Lemma \ref{Lemma2.2[Emz17]}.\\
Applying again Sobolev embedding with respect to the space variable with $k>n/2$ fixed and using \eqref{Eq1LemmaLInftyEstimateHeatEq}, we deduce
\begin{align*}
\max_{t\in[0,1]} \Vert \partial_x^\gamma w(\cdot,t) &\Vert_{L^\infty(\Omega')}^2 \leq \max_{t\in[0,1]} C_2 \Vert \partial_x^\gamma w(\cdot,t) \Vert_{W^{k,2}(\Omega')}^2 \leq \max_{t\in[0,1]} C_2 \sum_{\vert\alpha\vert \leq k} \Vert \partial_x^\alpha \partial_x^\gamma w(\cdot,t) \Vert_{L^2(\Omega')}^2\\
&\leq C_2 \sum_{\vert\alpha\vert \leq k} \max_{t\in[0,1]} \Vert \partial_x^{\gamma+\alpha} w(\cdot,t) \Vert_{L^2(\Omega')}^2\\
&\leq C_2 \big\vert \{ \alpha\in\N^n: \vert\alpha\vert \leq k \} \big\vert \ C_1 N \Vert w \Vert_{L^2(\Omega \times (0,1))} \rho^{-(\vert\gamma\vert+k)} (\vert\gamma\vert + k)!\\
&\leq \widetilde{N} \Vert w \Vert_{L^2(\Omega \times (0,1))} \widetilde{\rho}^{\ -\vert\gamma\vert} \vert\gamma\vert!
\end{align*}
for appropriately chosen $\widetilde{N}$ and $\widetilde{\rho}$, and the statement follows.
\end{proof}

\printbibliography

\end{document}